\definecolor{amaranth}{rgb}{0.9, 0.17, 0.31}
\definecolor{bluegray}{rgb}{0.4, 0.6, 0.8}
\newtheorem*{maintheorem*}{Main Theorem}
\newtheorem{theorem}{Theorem}[section]
\newtheorem{proposition}[theorem]{Proposition}
\newtheorem{corollary}[theorem]{Corollary}
\newtheorem{lemma}[theorem]{Lemma}
\newtheorem*{theorem*}{Theorem}
\newtheorem{remark}[theorem]{Remark}
\newtheorem*{example*}{Example}
\newtheorem*{conjecture*}{Conjecture}
\def\1{\mathbf 1}
\def\u{\mathbf u}
\def\vv{\mathbf v}
\def\0{\mathbf 0}
\def\cB{\mathcal B}
\def\cC{\mathcal C}
\def\cD{\mathcal D}
\def\cF{\mathcal F}
\def\cG{\mathcal G}
\def\cH{\mathcal H}
\def\cK{\mathcal K}
\def\cO{\mathcal O}
\def\cP{\mathcal P}
\def\cS{\mathcal S}
\def\PG{{\rm PG}}
\def\PGammaL{{\rm P\Gamma L}}
\def\AGammaL{{\rm A\Gamma L}}
\def\GF{{\rm GF}}
\def\GL{{\rm GL}}
\def\Aut{{\rm Aut}}
\def\GF{{\rm GF}}
\def\mod{{\rm mod} }
\def\Tr{{\rm Tr}}
\def\<{\langle}
\def\>{\rangle}
\newcommand\comment[1]{}
\newcommand*{\shifttext}[2]{
  \settowidth{\@tempdima}{#2}
  \makebox[\@tempdima]{\hspace*{#1}#2}
}
\newcommand\redsout{\bgroup\markoverwith{\textcolor{amaranth}{\rule[0.5ex]{2pt}{0.4pt}}}\ULon}
\newcommand\redout{\bgroup\markoverwith
{\textcolor{red}{\rule[.4ex]{2pt}{0.8pt}}}\ULon}
\title{Classification of spreads \\ of Tits quadrangles of order $64$}
\author{Giusy Monzillo\footnote{The research was supported by the Italian National Group for Algebraic and Geometric Structures and their Applications (GNSAGA-INdAM). } \\
\small {\tt giusy.monzillo@famnit.upr.si}\\[0.8ex]
\small UP FAMNIT\\[-0.8ex]
\small University of Primorska\\[-0.8ex]
\small Glagolj\v aska 8 \\[-0.8ex]
\small 6000 Koper, Slovenia
\and   
Tim Penttila \\ 
\small {\tt tim.penttila@adelaide.edu.au}\\
\small School of Mathematical Sciences\\[-0.8ex]
\small The University of Adelaide\\[-0.8ex]
\small Adelaide, South Australia \\[-0.8ex]
\small 5005 Australia
\and
Alessandro Siciliano$^*$ \\
\small{\tt alessandro.siciliano@unibas.it}\\[0.8ex]
\small Dipartimento di Matematica, Informatica ed Economia\\[-0.8ex]
\small Universit\`a degli Studi della Basilicata\\[-0.8ex]
\small Viale dell'Ateneo Lucano 10 \\[-0.8ex]
\small 85100 Potenza, Italy\\
}
\date{}
\begin{document}


\maketitle


\begin{abstract}
Brown et al. provide a representation of a spread of the Tits quadrangle $T_2(\cO)$,  $\cO$ an oval of $\PG(2,q)$,  $q$ even, in terms of a certain family of $q$ ovals of $\PG(2,q)$. 
By combining this representation with the Vandendriessche classification of hyperovals in $\PG(2,64)$ and the  classification of flocks of the quadratic cone in $\PG(3,64)$, recently given by the authors, in this paper, we classify all  the spreads of $T_2(\cO)$, $\cO$ an oval of $\PG(2,64)$, up to equivalence. These complete the classification  of spreads of $T_2(\cO)$ for  $q\le 64$.
\end{abstract}

{\it Keywords: generalized quadrangle, spread, oval, flock}

{\it AMS Class.: 51E21, 51E20, 51E12}

\section{Introduction}

	The recent classification of hyperovals of $\PG(2,64)$ \cite{vander} has provided an opportunity to extend classification results for related structures to order 64. The first of these was inversive planes \cite{pen}, which correspond to fans of ovals. The second of these was flocks (and the related elation generalized quadrangles) \cite{mps}, which correspond to herds of ovals. This paper gives a third: spreads of Tits quadrangles, which correspond to generalized fans of ovals.
	
	This paper is thus a sequel to \cite{boppr}, which classified spreads of Tits quadrangles of order at most 32. We take the opportunity to clarify some issues of when isomorphic spreads of Tits quadrangles correspond to isomorphic generalized fans of ovals, left open in \cite{boppr1}, by introducing the concept of a posy of ovals (and of equivalence of posies) in Section \ref{sec_2}. We also correct an error in a line of a table in \cite{boppr}.

All spreads of Tits quadrangles of order 64 were previously known -- no new spreads were discovered in the course of this classification. 
Of the nineteen Tits quadrangles of order 64, fifteen admit a unique spread. Each of the remaining four, which are already known to be proper subquadrangles of a generalized quadrangle with the same number of lines on a point, has a spread on every point of the oval  on which it is constructed; all these spreads are subtended. In addition, these Tits quadrangles are actually the {\em only} subquadrangles of a generalized quadrangle with the same number of lines on a point; the Tits quadrangle arising from the pointed conic is not among the latter, as  is also the case for orders 16 and 32.

\section{Preliminaries and known results}\label{sec_2}

A (finite) {\em generalized quadrangle} (GQ) is an incidence structure $\cS=(\cP,\cB, {\rm I})$ where  $\cP$ and $\cB$ are disjoint (nonempty) sets of objects called {\em points} and {\em lines}, respectively, and for which $\rm I\subseteq (\cP\times\cB)\cup (\cB\times\cP)$ is a symmetric point-line incidence relation satisfying the following axioms:
\begin{itemize} 
\item[ (i)] Each point is incident with $t+1$ lines, and two distinct points are incident with at most one line.
\item[(ii)]  Each line is incident with $s+1$ points, and two distinct lines are incident with at most one point.
\item[(iii)] If $x$ is a point and $L$ is a line not incident with $x$, then there is a unique pair $(y, M)\in \cP\times \cB$ such that $x\, {\rm I}\, M\, {\rm I}\, y\, {\rm I}\,  L$.
\end{itemize}

The integers $s$ and $t$ are the {\em parameters} of the GQ, and $\cS$ is said to have {\em order} $(s,t)$; if $s=t$, it is said to have order $s$.  If $\cS$ has order $(s,t)$, then it follows that $|\cP| = (s + 1)(st + 1)$ and $|\cB| = (t + 1)(st + 1)$. If $\cS=(\cP,\cB, {\rm I})$ is a GQ of order $(s,t)$, then the incidence structure $\cS^*=(\cB,\cP, {\rm I})$ is a GQ of order $(t,s)$, and it is called the {\em dual of}  $\cS$. A {\em spread}  of $\cS$ is a set $\Sigma$ of lines  of $\cS$ such that every point of $\cS$ is incident with exactly one element of $\Sigma$.  An {\em ovoid}  of $\cS$ is a set $\Omega$ of points of $\cS$ such that every line of $\cS$ is incident with exactly one element of $\Omega$. It is evident that, under duality, spreads/ovoids of $\cS$ correspond to ovoids/spreads of $\cS^*$. If $\Sigma$ is a spread and $\Omega$ is an ovoid of the GQ $\cS$ of order $(s,t)$, then $|\Sigma|=|\Omega|=st+1$. Let  $\Sigma_i$ be a spread of a GQ $\cS_i$, $i=1,2$. Then, $\Sigma_1$ and $\Sigma_2$ are said to be {\em equivalent} if there is an isomorphism from $\cS_1$ to $\cS_2$ which maps $\Sigma_1$ to $\Sigma_2$. The same definition applies to ovoids.  Spreads and ovoids of generalized quadrangles are surveyed in \cite{tp}.

The classical generalized quadrangles of order $q$ are $W(q)$, whose points and lines are those of a symplectic geometry in $\PG(3,q)$; and $Q(4,q)$, whose points and lines are those of a non-singular quadric in $\PG(4,q)$. It is known that $Q(4,q)$ is isomorphic to the dual of $W(q)$, and it is self-dual when $q$ is even \cite{pt}. For more details on generalized quadrangles, the reader is referred to \cite{pt}.

An {\em oval} in $\PG(2,q)$ is a set $\cO$ of $q+1$ points, no three collinear. A line $\ell$ of $\PG(2,q)$ is said to be {\em external}, {\em tangent} or {\em secant} to $\cO$ according as it intersects $\cO$ in 0, 1, or 2 points. When $q$ is even, all the tangent lines to $\cO$ meet in a common point $N$ called the {\em nucleus} of $\cO$. The set $\cO\cup\{N\}$ is a {\em hyperoval}, that is, a set of $q+2$ points, no three collinear. It is well known that hyperovals exist only when $q$ is even. Given  an oval $\cO$ in $\PG(2,q)$, $q$ even, it is possible to choose projective coordinates such that $\cO$ contains the points $\{(1,0,0), (1, 1, 1), (0, 0, 1)\}$\footnote{\ To be formally correct, $(x,y,z)$ should be replaced by $\<(x,y,z)\>$ to denote the point defined by the vector $(x,y,z)$, but we guess that the only result of putting angle brackets everywhere in the paper would be to burden it with a uselessly heavy notation. For this reason, we warn the reader that throughout this paper we will freely use this simplified notation.}. This yields that it can be written as
\[
\cO = \cD(f)= \{(1,t,f(t)):t\in \GF(q)\} \cup \{(0,0,1)\},\ \hbox{with nucleus}\ (0,1,0),
\]
where $f$ is a permutation of $\GF(q)$ with $f(0) = 0$, $f(1) = 1$, and such that $f_s:x\mapsto (f(x+s)+f(s))/x$ is a permutation of $\GF(q)\setminus\{0\}$, for any $s\in\GF(q)$.
Permutations of $\GF(q)$ with these properties are called {\em o-polynomials} \cite[p.185]{hir}. It turns out that the degree of an o-polynomial is at most $q-2$. If  $f(1)=1$ is not required but the other conditions are, then $f$ is an {\em o-permutation over} $\GF(q)$. 

Conversely, for every polynomial $f$ such that $f(0) = 0$, $f(1) = 1$ and $f_s:x\mapsto (f(x+s)+f(s))/x$ is a permutation of $\GF(q)\setminus\{0\}$, for any $s\in\GF(q)$, the point-set $\cD(f)=\{(1,t,f(t)):t\in\GF(q)\}\cup \{(0,0,1)\}$ is an oval in $\PG(2,q)$ with nucleus $(0,1,0)$ \cite[Theorem 8.22]{hir}.

In this setting, the oval $\cD(x^2)$ is a {\em conic}, while  $\cD(x^{1/2})$ is a {\em pointed conic} of $\PG(2,q)$.
 
%
  

%
A {\em translation oval} $\cO$ in $\PG(2,q)$, $q$ even, is an oval which is invariant under a group $E$ of elations of order $q$ such that all the elations in $E$ have a common axis $\ell$.  The line $\ell$ is a tangent to $\cO$ and is called an {\em axis} of $\cO$. Payne \cite{payne} proved that every translation oval can be written in the form
\[
\cD(\alpha)=\cD(t^\alpha)= \{(1,t,t^{\alpha}):t\in\GF(q)\} \cup \{(0,0,1)\},
\]
for some  generator $\alpha$ of $\Aut(\GF(q))$.

Starting from an oval $\cO$ in $\PG(2,q)$ embedded as a plane in $\PG(3,q)$, Tits constructed a class of (non-classical) GQs of order $q$ \cite{dem,pt}. \\ The points are:  
\begin{itemize}
\item[(i)] the  points of $\PG(3,q)$ not in $\PG(2,q)$;
\item[(ii)] the  planes of $\PG(3,q)$ meeting $\PG(2,q)$ in a unique point of $\cO$;
\item[(iii)] the symbol $(\infty)$.
\end{itemize}

The lines are: 
\begin{itemize}
\item[(a)] the lines of $\PG(3,q)$ not in $\PG(2,q)$ meeting $\PG(2,q)$ in a point of $\cO$;
\item[(b)] the points of $\cO$. 
\end{itemize}

The incidence relation is as follows: a point of type (i) is incident only with the lines of type (a) which contain it;  a point of type (ii) is incident with all lines of type (a) contained in it and  with the unique line of type (b) on it;   the point  of type (iii) is incident with   all lines of type (b) and no line of type (a).
This GQ is denoted by $T_2(\cO)$.

In the following, whenever an oval $\cO$ in $\PG(2,q)$ is considered, the corresponding Tits quadrangle $T_2(\cO)$ is constructed by embedding $\PG(2,q)$ as the plane $\pi_\infty$ in $\PG(3,q)$ with equation $x_0=0$.

\begin{theorem}\cite[Theorem 3.1]{boppr1}\label{th_4}
Let $\cO$ be an oval in $\PG(2,q)$ and let $T_2(\cO)$ be the Tits quadrangle constructed from $\cO$. Then,
\begin{itemize}
\item[(i)] if $\cO$ is not a conic, then $\Aut(T_2(\cO))$ is the stabilizer of $\cO$ in $\PGammaL(4,q)$;
\item[(ii)] if $\cO$ is a conic, then the stabilizer of $(\infty)$ in $\Aut(T_2(\cO))$  is the stabilizer of $\cO$ in $\PGammaL(4,q)$.
\end{itemize}
\end{theorem}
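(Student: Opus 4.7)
The plan is to establish the common identity
\[
\Stab_{\Aut(T_2(\cO))}((\infty)) \;=\; \Stab_{\PGammaL(4,q)}(\cO),
\]
which is statement (ii), and then to prove that, when $\cO$ is not a conic, the point $(\infty)$ is fixed by every element of $\Aut(T_2(\cO))$, upgrading this identity to (i).

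For the inclusion $\supseteq$, let $g$ stabilize $\cO$ in $\PGammaL(4,q)$. Since $\pi_\infty=\langle\cO\rangle$, the collineation $g$ also fixes $\pi_\infty$ setwise. Hence $g$ maps affine points, affine lines meeting $\pi_\infty$ in $\cO$, planes meeting $\pi_\infty$ in a tangent to $\cO$, and points of $\cO$ among themselves, preserving incidence; extending $g$ by fixing $(\infty)$ yields an element of $\Aut(T_2(\cO))$.

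For the reverse inclusion, let $\psi\in\Aut(T_2(\cO))$ fix $(\infty)$. Then $\psi$ permutes the type (b) lines, and hence the points of $\cO$; it also permutes the type (i) points, i.e.\ the affine points of $\PG(3,q)$. Two such affine points are collinear in $T_2(\cO)$ via a type (a) line precisely when the line through them in $\PG(3,q)$ meets $\pi_\infty$ at an oval point, so this collinearity relation is preserved by $\psi$. Combining the induced bijections with the action on the type (ii) points (the planes of $\PG(3,q)$ meeting $\pi_\infty$ in a tangent of $\cO$) and applying a standard reconstruction via the fundamental theorem of projective geometry, one obtains a collineation $\bar\psi\in\PGammaL(4,q)$ that fixes $\pi_\infty$ and stabilizes $\cO$ and which induces $\psi$ on $T_2(\cO)$.

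The main obstacle, and the essential content of part (i), is to prove that $(\infty)$ is fixed by the full group $\Aut(T_2(\cO))$ when $\cO$ is not a conic. The point $(\infty)$ is always a regular point of $T_2(\cO)$---indeed it is the base point of a natural translation structure furnished by the translations of $\PG(3,q)$ that fix $\pi_\infty$. By the classical Payne--Thas characterization (see \cite{pt}), the existence of a second regular point forces $T_2(\cO)\cong Q(4,q)$, equivalently $\cO$ is a conic. Thus, when $\cO$ is not a conic, $(\infty)$ is the unique regular point of $T_2(\cO)$, and since regularity is invariant under $\Aut(T_2(\cO))$, the point $(\infty)$ is fixed; this gives (i), while (ii) follows from the core identity together with the fact that, for a conic, $T_2(\cO)\cong Q(4,q)$ is point-transitive and the stabilizer of $(\infty)$ is therefore proper.
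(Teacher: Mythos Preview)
The paper does not prove this theorem itself; it is quoted from \cite[Theorem~3.1]{boppr1} and used as a black box, so there is no argument in the present paper to compare yours against.  Your sketch therefore has to stand on its own.

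The set-up of the identity $\Stab_{\Aut(T_2(\cO))}((\infty))=\Stab_{\PGammaL(4,q)}(\cO)$ is correct in spirit, although the phrase ``standard reconstruction via the fundamental theorem of projective geometry'' hides real work: one has to show that an automorphism fixing $(\infty)$ preserves enough of the affine line structure of $\PG(3,q)\setminus\pi_\infty$ to apply that theorem.  The clean way to do this is through the theory of translation generalized quadrangles \cite[Chapter~8]{pt}, which yields directly that every automorphism of a TGQ fixing the base point is induced by a semilinear map of the ambient space.

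The genuine gap is the last paragraph.  Your assertion that ``the existence of a second regular point forces $T_2(\cO)\cong Q(4,q)$'' is \emph{not} the Payne--Thas characterisation.  What \cite[5.2.1]{pt} actually says is that a GQ of order $s>1$ is isomorphic to $W(s)$ if and only if \emph{every} point is regular; the partial results in \cite[5.2.5--5.2.7]{pt} require far more than two regular points.  (Moreover, for $q$ odd the point $(\infty)$ is antiregular, not regular, in $T_2(\cO)\cong Q(4,q)$, so ``always a regular point'' is already false---though part~(i) is vacuous there by Segre's theorem.)

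What does single out $(\infty)$ when $\cO$ is not a conic is a stronger property: $(\infty)$ is a \emph{translation point}, since the affine translations of $\PG(3,q)$ with axis $\pi_\infty$ form an abelian elation group of order $q^{3}$ about it; equivalently, $(\infty)$ is \emph{coregular} (every line of type~(b) is regular).  If an automorphism moved $(\infty)$ to a second point $p$, then $p$ would also be a translation point, and two distinct translation points force the Moufang condition and hence classicality (cf.\ \cite[Chapters~8--9]{pt}).  This, or the coregularity version of the same argument, is what must replace your regular-point claim.
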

\begin{corollary}\cite[Corollary 3.2]{boppr1}\label{cor_2}
	Let $\cO$, $\cO'$ be ovals in $\PG(2,q)$ and $T_2(\cO)$, $T_2(\cO')$  the Tits quadrangles constructed  from $\cO$, $\cO'$, respectively. Then,
\begin{itemize}
\item[(i)] if $\cO$ is not a conic, every isomorphism between $T_2(\cO)$ and $T_2(\cO')$ is induced by a collineation of $\PG(3,q)$ which maps $\cO$ to $\cO'$, and conversely;
\item[(ii)] if $\cO$ is a conic, with $(\infty)$ and $(\infty)'$ the points of type (iii) of $T_2(\cO)$ and $T_2(\cO')$, respectively, then every isomorphism between $T_2(\cO)$ and $T_2(\cO')$ mapping $(\infty)$ to $(\infty)'$ is induced by a collineation of  $\PG(3,q)$ which maps $\cO$ to $\cO'$, and conversely.
\end{itemize}
\end{corollary}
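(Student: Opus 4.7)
\emph{Plan.} The easy direction is functorial: any $\tau\in\PGammaL(4,q)$ with $\tau(\cO)=\cO'$ fixes the plane $\pi_\infty=\langle\cO\rangle=\langle\cO'\rangle$, so it sends affine points to affine points and tangent lines of $\cO$ to tangent lines of $\cO'$; combined with the convention $(\infty)\mapsto(\infty)'$ this induces an isomorphism $T_2(\cO)\to T_2(\cO')$ sending $(\infty)$ to $(\infty)'$.

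For the converse, let $\phi\colon T_2(\cO)\to T_2(\cO')$ be an isomorphism. In case~(i) I would first show that $\phi((\infty))=(\infty)'$. Since $T_2$ of a conic is isomorphic to $Q(4,q)$, if $\cO$ is not a conic then neither is $\cO'$, so Theorem~\ref{th_4}(i) applies to both quadrangles. In particular the group $T\subseteq\PGammaL(4,q)$ of elations with axis $\pi_\infty$ stabilises every oval in $\pi_\infty$ and hence lies in $\Aut(T_2(\cO))\cap\Aut(T_2(\cO'))$. A direct orbit analysis shows that $T$ acts regularly on type~(i) points, has orbits of length $q$ on type~(ii) points, and fixes $(\infty)$ (respectively $(\infty)'$) as the unique fixed point of $T$ in $T_2(\cO)$ (respectively $T_2(\cO')$); comparing these fixed-point sets under $\phi$ forces $\phi((\infty))=(\infty)'$. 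In case~(ii) this same equality is part of the hypothesis.

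Once $\phi((\infty))=(\infty)'$ is established, $\phi$ preserves the partitions of points into types~(i)--(iii) and of lines into types~(a)--(b). Hence it restricts to a bijection of the affine point set $\AG(3,q)=\PG(3,q)\setminus\pi_\infty$, induces a bijection $\cO\to\cO'$ via type~(b) lines, and respects incidence along type~(a) lines and type~(ii) planes. The strategy is then to reduce to the automorphism case by constructing a single $\tau\in\PGammaL(4,q)$ with $\tau(\cO)=\cO'$ whose induced isomorphism agrees with $\phi$ on, say, a chosen frame; once such a $\tau$ is available, $\phi\circ\tau^{-1}\in\Aut(T_2(\cO'))$ fixes $(\infty)'$ and, by Theorem~\ref{th_4} (part~(i) in case~(i), or the stabiliser of $(\infty)'$ in part~(ii) in case~(ii)), is induced by an element of the $\PGammaL(4,q)$-stabiliser of $\cO'$, and composing with $\tau$ yields the required collineation inducing $\phi$.

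The main obstacle is the construction of $\tau$. I would fix a projective frame of $\PG(3,q)$ adapted to $\cO$ --- an affine point together with three points of $\cO$ in general position --- let $\tau$ be the unique element of $\PGammaL(4,q)$ carrying it to its $\phi$-image, and then verify compatibility on the remainder of $T_2(\cO)$ using the incidences preserved by $\phi$ on type~(a) lines and type~(ii) planes; here the fundamental theorem of projective geometry enters to produce the requisite semilinear structure (and to absorb any Frobenius twist) so that $\tau$ is indeed in $\PGammaL(4,q)$.
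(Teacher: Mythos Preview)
The paper does not prove this statement; it is quoted verbatim from \cite[Corollary~3.2]{boppr1} without argument, so there is no in-paper proof to compare against. Your overall architecture---reduce the isomorphism statement to the automorphism statement Theorem~\ref{th_4} by composing with a collineation $\tau$ carrying $\cO$ to $\cO'$---is exactly the natural route, and the easy direction is fine. Two points in your argument need attention, however.

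First, your deduction of $\phi((\infty))=(\infty)'$ is not valid as phrased. Knowing that $T$ has $(\infty)$ as its unique fixed point in $T_2(\cO)$ and $(\infty)'$ as its unique fixed point in $T_2(\cO')$ does \emph{not} by itself give the conclusion, because $\phi$ need not conjugate $T$ to $T$; ``comparing these fixed-point sets under $\phi$'' compares $\mathrm{Fix}(T)$ with $\mathrm{Fix}(\phi T\phi^{-1})$, not with $\mathrm{Fix}(T)$. The repair is to use Theorem~\ref{th_4}(i) one step earlier: since $\cO$ is not a conic, $\Aut(T_2(\cO))=\PGammaL(4,q)_{\cO}$ fixes $\pi_\infty$ and hence $(\infty)$; therefore $\phi^{-1}T\phi\le\Aut(T_2(\cO))$ fixes $(\infty)$, so $T$ fixes $\phi((\infty))$, and now uniqueness of the $T$-fixed point in $T_2(\cO')$ yields $\phi((\infty))=(\infty)'$. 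You have all the ingredients; only the logical order is off.

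Second, the construction of $\tau$ is where the substance lies, and ``verify compatibility on the remainder'' glosses over it. Once $\phi((\infty))=(\infty)'$, $\phi$ restricts to a bijection of affine points sending type~(a) lines to type~(a) lines---but these are only the affine lines meeting $\pi_\infty$ in $\cO$, not all affine lines, so the fundamental theorem of projective (or affine) geometry is not directly applicable. Moreover, four points (one affine point and three points of $\cO$) do not form a frame of $\PG(3,q)$, so they do not even determine an element of $\PGL(4,q)$, let alone pin down the Frobenius twist. The standard way to linearise $\phi$ on the affine points is to use that $T_2(\cO)$ is a translation generalized quadrangle with base point $(\infty)$: $\phi$ intertwines the translation groups about $(\infty)$ and $(\infty)'$, which endows the affine point set with a vector-space structure preserved (semilinearly) by $\phi$, and $\cO$, $\cO'$ are recovered as the directions of the lines through the base point. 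This is essentially how \cite{boppr1} (relying on \cite{pt}) proceeds; you should either invoke that machinery or supply a genuine argument that your frame-matched $\tau$ satisfies $\tau(\cO)=\cO'$.
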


By a result in Payne and Thas \cite[3.2.2]{pt},  $T_2(\cO)$ is isomorphic to $Q(4,q)$ if and only if $\cO$ is a conic. When $q$ is even and $\cO$ is a conic, $T_2(\cO)$ is 
isomorphic to $W(q)$, and conversely; in this case $T_2(\cO)$ is said to be {\em classical} . When $q$ is odd, $T_2(\cO)$ has no spreads, since  every oval of $\PG(2,q)$ is a conic, by the well-known theorem by Segre, and $Q(4,q)$, $q$ odd, has no spreads \cite[3.4.1(i)]{pt}. Spreads of $T_2(\cO)$ exist for $q$  even, and they consist of one line of type (b) and $q^2$ lines of type (a). 

From now on let $q$ be even. 
If $\cO$ is a conic $\cC$, spreads of  $T_2(\cC)$ correspond bijectively to ovoids of $W(q)$, i.e., ovoids in $\PG(3,q)$ \cite[p.54]{dem},\cite{thas72}. There are at least two non-equivalent ovoids in $\PG(3,q)$: the elliptic quadric, which exists for $q=2^h$, $h\ge1$, and the Tits ovoid, that exists for $q=2^h$, $h\ge3$ odd. Therefore, there are at least two non-equivalent spreads of $T_2(\cC)$, when $q$ is an odd power of 2. According to  \cite[12.5.2]{pt}, $T_2(\cO)$ is self-dual  if and only if  $\cO$ is a translation oval; so, in this case,  spreads of  $T_2(\cO)$ correspond bijectively to ovoids of $T_2(\cO)$. By using this equivalence, some spreads of $T_2(\cO)$ have been constructed. 

 Let $\pi$ be a plane of $\PG(3,q)$ intersecting $\pi_\infty$ in an external line $\ell$ to $\cO$. The point set $(\pi\setminus \ell)\cup\{(\infty)\}$ is an ovoid of $T_2(\cO)$ known as {\em planar ovoid};  when $\cO$ is a translation oval, it produces  a  spread of $T_2(\cO)$.
 
 If $\cO$ is a conic, then this spread is the one that corresponds to the elliptic quadric of $\PG(3,q)$. 
 If $\cO$ is a translation oval and $q=2^h$, $h$ odd, then $T_2(\cO)$ is also self-polar \cite[12.5.2]{pt}, and the absolute lines of the polarity form a spread \cite[1.8.2]{pt}.

 Let $q=2^{2e+1}$, $e\ge1$, and $\sigma:x\mapsto x^{2^{e+1}}\in\Aut(\GF(q))$. Let $\Omega$ be a Tits ovoid  in $\PG(3,q)$ such that $\pi_\infty$ meets $\Omega$ in an oval $\cO$. Then, $\cO$ is equivalent to $\cD(\sigma)$ \cite{tits}. It follows that the points of $\Omega\setminus\cO$ together with the tangent planes to $\Omega$ at a point of $\cO$ form an ovoid of $T_2(\cO)$, and hence it gives rise to a spread of $T_2(\cO)$ by duality\footnote{In the Table on page 280 of \cite{boppr1}, the o-polynomial describing $\cD(\sigma)$ should be $f(x)=x^\sigma$ and $\cO_s=\{(1,t,t^\sigma+s^{(\sigma+2)/\sigma-1}):t \in \GF(q)\}\cup \{(0,0,1)\}$.}.

 Spreads of $T_2(\cO)$ can be also obtained from flock GQs, i.e., GQs arising from flocks of the quadratic cone in $\PG(3,q)$. We direct the reader to \cite{pay96} for definitions and references on flock GQs. Let $\cS$ be a flock GQ of order $(q^2,q)$ ($q$ even). Then, $\cS$ contains a family of subquadrangles of order $q$,  each of which is of type $T_2(\cO)$, for some oval $\cO$. These results were originally obtained in \cite{pay85,pm}, but also appear in the more accessible \cite{boppr}.

  Fix such a subquadrangle $T_2(\cO)$ and let $\ell$ be a line of $\cS$ but not of $T_2(\cO)$. Then, $\ell$ is disjoint from $T_2(\cO)$ and every point on $\ell$ is incident with exactly one (extended) line of $T_2(\cO)$. This set of $q^2+1$ lines forms a spread of $T_2(\cO)$. 
By \cite[2.2.1]{pt}, a similar construction works every time  $T_2(\cO)$ is a proper subquadrangle of a GQ of order $(s,q)$, $s\le q^2$; such a spread is  said to be {\em subtended}.

It is possible to construct spreads of $T_2(\cO)$ from a given one, as shown in \cite{boppr}.
Let $N$ be the nucleus of $\cO$ and $\Sigma$ a spread of $T_2(\cO)$ containing a point $P$ of $\cO$. Then, the set $\Sigma'=(\Sigma\cup \{N\})\setminus\{P\}$ is a spread of $T_2((\cO\cup \{N\})\setminus\{P\})$; this method of constructing spreads is called {\em nucleus swapping} \cite{boppr1} (This operation is called {\em nucleus switching} in \cite{boppr}).

In \cite{boppr1}, the equivalence between spreads of $T_2(\cO)$ and a certain family of ovals in $\PG(2,q)$ is established.
\\ Let $\cO_1$ and $\cO_2$ be ovals in $\PG(2,q)$,  and $P$ a point not in $\cO_1\cup \cO_2$. The ovals $\cO_1$ and $\cO_2$ are said to be {\em compatible at} $P$ if they have the same nucleus $N$, a point $Q$ in common, the line $\<P,Q\>$ as a common tangent and if every secant line to $\cO_1$ on $P$ is external to $\cO_2$. It follows that every external line to $\cO_1$ on $P$ is secant to $\cO_2$. 

Let $f$ be an o-polynomial over $\GF(q)$. A {\em generalized $f$-fan} in $\PG(2,q)$ is a set $\{\cO_s:s \in \GF(q)\}$ of ovals such that:

\begin{itemize}
\item[1.] every $\cO_s$ has nucleus $(0,1,0)$;
\item[2.] $\cO_s\cap \cO_t=\{(0,0,1)\}$, for all $s\neq t$;
\item[3.] $\cO_s$ and $\cO_t$ are compatible at $P_{st}=(0,1,(f(s)+f(t))/(s+t))$, for all $s\neq t$.
\end{itemize}

The connection between spreads of $T_2(\cD(f))$ and generalized $f$-fans in $\PG(2,q)$ is made explicit  in the following result.
\begin{theorem}\cite[Theorem 1.1]{boppr1}\label{th_1}
Let $f$ be an o-polynomial over $\GF(q)$ and $\pi_\infty$ the plane of $\PG(3,q)$ with equation $x_0=0$. Let $\Sigma$ be a spread of $T_2(\cD(f))$ containing $(0,0,0,1)\in\cD(f)\subset \pi_\infty$, and $\pi$ the plane of $\PG(3,q)$ with equation $x_1=0$. For $s\in\GF(q)$, let	$\cK_s$	be	the	set	of the $q$	lines	of	$\Sigma$	containing	$(0,1, s,f(s))$	and	let $\cO_s=\{\pi\cap \ell:\ell\in\cK_s\}\cup \{(0,0,0,1)\}$. Then, $\{\cO_s:s\in\GF(q)\}$ is a generalized $f$-fan of ovals in $\pi$ (with nucleus $(0,1,0)$). Conversely, if $\cF=\{\cO_s:s\in\GF(q)\}$ is a generalized $f$-fan of ovals in $\pi$ (with nucleus $(0,1,0)$), then 
\[
\Sigma(\cF)=\{(0,0,0,1)\}\cup\{\<X,(0,1,s,f(s))\>:s\in\GF(q), X\in \cO_s\setminus\{(0,0,0,1)\}\}
\]  
is a spread of $T_2(\cD(f))$  containing $(0,0,0,1)$.
\end{theorem}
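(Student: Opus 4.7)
The plan is to establish both directions of the correspondence by translating the spread axioms for $\Sigma$ into the defining conditions of a generalized $f$-fan, using the projection $\psi$ from the point $P_s=(0,1,s,f(s))$ onto $\pi$ as the main bridge. Since $P_s\notin\pi$, $\psi$ is a bijection between the pencil of lines of $\PG(3,q)$ through $P_s$ and the points of $\pi$; a direct computation shows that the tangent to $\cD(f)$ at $P_s$ is mapped to the nucleus $(0,1,0)$, the secant $\langle P_s,(0,1,t,f(t))\rangle$ is mapped to $P_{st}=(0,1,(f(s)+f(t))/(s+t))$, and the secant $\langle P_s,(0,0,0,1)\rangle$ is mapped to $(0,0,1)$; in particular $\cO_s\setminus\{(0,0,1)\}$ is the $\psi$-image of $\cK_s$.

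For the forward direction, the spread axiom forces the $q$ lines of $\cK_s$ to lie in pairwise distinct type-(ii) planes at $P_s$. Since there are exactly $q$ such planes and each is sent by $\psi$ to one of the $q$ lines of $\pi$ through $(0,1,0)$ different from $\pi\cap\pi_\infty$, every line of $\pi$ through $(0,1,0)$ meets $\cO_s$ in exactly one point, so $(0,1,0)$ is the nucleus. The core of the proof is then the following count. Let $\sigma\subset\PG(3,q)$ be a plane meeting $\pi_\infty$ along $\langle P_s,P_t\rangle$ with $P_t=(0,1,t,f(t))$, $t\ne s$. Two lines of $\cK_s$ inside $\sigma$ are permissible because $\sigma$ is not a type-(ii) point of $T_2(\cD(f))$, but one line of $\cK_s$ together with one of $\cK_t$ in $\sigma$ would meet at an affine point and violate the spread; hence, writing $a=|\cK_s\cap\sigma|$ and $b=|\cK_t\cap\sigma|$, we have $\min(a,b)=0$. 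Counting the $q^2$ affine points of $\sigma$ (each covered exactly once by $\Sigma$, with lines of $\cK_u$ for $u\in\GF(q)\setminus\{s,t\}$ contributing one affine point of $\sigma$ each) gives $qa+qb+q(q-2)=q^2$, that is, $a+b=2$; the parallel count for $\sigma\cap\pi_\infty=\langle P_s,(0,0,0,1)\rangle$ instead yields $a=1$. Thus no line of $\pi$ contains three points of $\cO_s$, so $\cO_s$ is an oval, and $\cO_s\cap\cO_t=\{(0,0,1)\}$ because a shared affine point would lie on a line belonging to both $\cK_s$ and $\cK_t$. The same dichotomy ``$(a,b)=(2,0)$ or $(0,2)$'' yields compatibility at $P_{st}$: exactly half of the $q$ planes $\sigma\ne\pi_\infty$ through $\langle P_s,P_t\rangle$ provide lines $\sigma\cap\pi$ that are secants of $\cO_s$ through $P_{st}$ and external to $\cO_t$, the other half vice versa, while $\pi\cap\pi_\infty=\langle P_{st},(0,0,1)\rangle$ is the common tangent coming from $\sigma=\pi_\infty$.

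For the converse, since $(0,1,0)$ is the nucleus of $\cO_s$ and therefore not on $\cO_s$, the line $\pi\cap\pi_\infty=\langle(0,0,1),(0,1,0)\rangle$ is the tangent to $\cO_s$ at $(0,0,1)$ and meets $\cO_s$ there only, so $\cO_s\setminus\{(0,0,1)\}$ lies in $\pi\setminus\pi_\infty$; consequently each $\langle X,P_s\rangle$ is a line of type (a) of $T_2(\cD(f))$. As $|\Sigma(\cF)|=q^2+1$ matches the size of a spread, it suffices to verify pairwise non-incidence. The type-(b) line $(0,0,0,1)$ meets only $(\infty)$ and type-(ii) planes at $(0,0,0,1)$, which no type-(a) line through some $P_s$ touches. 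For $s=s'$, two distinct lines $\langle X,P_s\rangle,\langle X',P_s\rangle$ would share a type-(ii) point only if $X,X',(0,1,0)$ were collinear in $\pi$, which the nucleus property of $\cO_s$ forbids. For $s\ne s'$, a shared affine point projects from $P_s$ to the collinearity of $X,X',P_{ss'}$ in $\pi$; since $P_{ss'}\ne(0,1,0)$ (as $f(s)\ne f(s')$), the line $\langle X,P_{ss'}\rangle$ does not pass through the nucleus and is therefore a secant of $\cO_s$, so compatibility at $P_{ss'}$ forces it to be external to $\cO_{s'}$, contradicting $X'\in\cO_{s'}$. The main obstacle throughout is the counting identity $a+b=2$ (with $\min(a,b)=0$) underlying the forward direction: essentially every defining property of a generalized $f$-fan and every separation argument in the converse reduces to it.
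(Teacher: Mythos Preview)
The paper does not prove this theorem; it merely quotes it as \cite[Theorem~1.1]{boppr1}. There is therefore no proof in the present paper to compare your attempt against.

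Assessed on its own, your argument is sound and is in fact the natural one. The projection $\psi$ from $P_s$ onto $\pi$ is the right bridge, and the counting identity $qa+qb+q(q-2)=q^2$ together with $\min(a,b)=0$ is exactly what drives the oval and compatibility properties in the forward direction. Your treatment of the converse via the nucleus condition (for $s=s'$) and the compatibility condition (for $s\ne s'$) is also correct.

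One small omission: in the converse, for $s\ne s'$ you rule out a shared \emph{type-(i)} point via compatibility, but you do not explicitly dispose of a shared \emph{type-(ii)} point. This is immediate---the unique type-(ii) point on $\langle X,P_s\rangle$ is a plane whose intersection with $\pi_\infty$ is the tangent to $\cD(f)$ at $P_s$, and likewise at $P_{s'}$ for $\langle X',P_{s'}\rangle$; these tangent lines are distinct since $P_s\ne P_{s'}$, so the planes differ---but it should be stated for completeness, just as you handled it in the $s=s'$ case. Similarly, for $s=s'$ you might add one sentence noting that two distinct lines through $P_s$ can only meet at $P_s\in\pi_\infty$, hence never at a type-(i) point; this is implicit but worth making explicit.
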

 Two generalized fans $\cF$ and $\cG$ are said to be {\em projectively equivalent} if there exists a collineation of $\PG(2,q)$ mapping the set of the ovals of $\cF$ to the one of $\cG$. Note that there are spreads of $T_2(\cO)$ arising from  projectively equivalent generalized fans that are not equivalent, and there are inequivalent generalized fans that yield equivalent spreads \cite[Section 3.3]{boppr1}. From here the need to introduce the following definition derives: two generalized fans $\cF$ and $\cG$ are said to be {\em spread equivalent} if the associated spreads are equivalent.
%

%
A {\em posy of ovals} in $\PG(3,q)$ is a set of $q+1$ ovals $\cF=\{\cO_s:s\in \GF(q)\}\cup\{\cO\}$ such that:
\begin{itemize}
\item[1.] $\cO=\cD(f)$ is contained in $\pi_\infty:x_0=0$, for some o-polynomial $f$ over $\GF(q)$;
\item[2.] $\{\cO_s:s\in\GF(q)\}$ is a generalized $f$-fan of ovals in $\pi:x_1=0$.
\end{itemize}

We call the oval $\cO$ the {\em director oval} of the posy. 

Two posies $\cF$ and $\cF'$ are said to be {\em equivalent} if there exists a collineation of $\PG(3,q)$ mapping the director oval of $\cF$ to the one of $\cF'$ and the generalized fan of $\cF$ to the one of $\cF'$. 
Theorem \ref{th_1} and Corollary \ref{cor_2} allow us to extend naturally the correspondence between generalized fans in $\pi:x_1=0$ and spreads of Tits quadrangles to a correspondence between posies of $\PG(3,q)$ and pairs $(T_2(\cO), \Sigma)$, where $\Sigma$ is a spread of $T_2(\cO)$.

Two pairs $(T_2(\cO),\Sigma)$ and $(T_2(\cO'),\Sigma')$  are  {\em equivalent} if there exists a collineation of $\PG(3,q)$ mapping $\cO$ to $\cO'$ and $\Sigma$ to $\Sigma'$.

\begin{theorem}\label{th_5}
{Let  $\cO=\cD(f)$ and $\cO'=\cD(f')$ be two ovals in $\PG(2,q)$. Let $\Sigma$ (resp. $\Sigma'$) be a spread of  $T_2(\cO)$ (resp. $T_2(\cO')$) containing the point $(0,0,0,1)$,  and $\cF$ (resp.  $\cF'$) the posy arising from $(T_2(\cO), \Sigma)$ (resp. $(T_2(\cO'), \Sigma')$).}
 Then, 
\begin{itemize}
\item[(i)] if $\cO$ is not a conic, then $\cF$ and $\cF'$ are equivalent  if and only if  $(T_2(\cO),\Sigma)$ and $(T_2(\cO'),\Sigma')$ are equivalent;
\item[(ii)] if $\cO$ is a conic, then $\cF$ and $\cF'$ are equivalent   if and only if  $(T_2(\cO),\Sigma)$ and $(T_2(\cO'),\Sigma')$ are equivalent under an isomorphism from $T_2(\cO)$ to $T_2(\cO')$ mapping $(\infty)$ to $(\infty)'$.
\end{itemize}
\end{theorem}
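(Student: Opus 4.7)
The plan is to deduce both directions from the explicit correspondence of Theorem~\ref{th_1} between spreads of $T_2(\cD(f))$ containing $(0,0,0,1)$ and generalized $f$-fans, combined with Corollary~\ref{cor_2}, which tells us when an isomorphism of Tits quadrangles comes from a collineation of $\PG(3,q)$. I would handle cases~(i) and~(ii) in parallel: in case~(ii), Corollary~\ref{cor_2}(ii) identifies ``$\phi\in\PGammaL(4,q)$ mapping $\cO\to\cO'$ and $\Sigma\to\Sigma'$'' with ``a GQ-isomorphism mapping $\Sigma\to\Sigma'$ and $(\infty)\to(\infty)'$'', so the two conditions compared by the ``if and only if'' concern the same group of collineations.

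For the forward direction I would assume $\phi\in\PGammaL(4,q)$ realizes posy equivalence, and observe that $\phi$ must stabilize $\pi_\infty$ (since $\cO,\cO'\subset\pi_\infty$), stabilize $\pi$ (since $\pi$ contains all fan ovals), and fix the common basepoint $(0,0,0,1)$ and common nucleus $(0,0,1,0)$ of the fans. By Corollary~\ref{cor_2}, $\phi$ induces an isomorphism $T_2(\cO)\to T_2(\cO')$. To verify $\phi(\Sigma)=\Sigma'$ I would apply $\phi$ to a generating line $\langle X,(0,1,s,f(s))\rangle$ of $\Sigma$ and obtain $\langle\phi(X),(0,1,\sigma(s),f'(\sigma(s)))\rangle$, where $\sigma$ is the permutation of $\GF(q)$ induced by $\phi|_\cO$. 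For this image to lie in $\Sigma'$, the fan permutation $\tau$ defined by $\phi(\cO_s)=\cO'_{\tau(s)}$ must equal $\sigma$; this identity is forced by the compatibility points $P_{st}=(0,1,(f(s)+f(t))/(s+t))$, whose image under $\phi$ must be simultaneously $P'_{\sigma(s)\sigma(t)}$ (as the third point of the secant of $\cO$ through $(0,1,s,f(s))$ and $(0,1,t,f(t))$) and $P'_{\tau(s)\tau(t)}$ (as the unique compatibility point of $\cO'_{\tau(s)}$ and $\cO'_{\tau(t)}$ prescribed by the $f'$-fan condition).

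The reverse direction runs symmetrically: from pair equivalence via $\phi$ I would read off $\phi(\pi_\infty)=\pi_\infty$, $\phi((0,0,1,0))=(0,0,1,0)$, and $\phi((0,0,0,1))=(0,0,0,1)$ (the last because $(0,0,0,1)$ is the unique type-(b) element of both spreads). Obtaining $\phi(\pi)=\pi$ takes more care: if necessary one composes $\phi$ with an element of the $\PGammaL(4,q)$-stabilizer of $(\cO',\Sigma')$ which permutes the $q$ plane-points of $T_2(\cO')$ incident with the type-(b) line $(0,0,0,1)$, so as to send $\phi(\pi)$ back to $\pi$. Once $\phi(\pi)=\pi$, the fan construction of Theorem~\ref{th_1} commutes with $\phi$, giving $\phi(\{\cO_s\})=\{\cO'_s\}$ and hence posy equivalence. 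The main obstacle will be the rigidity of the compatibility points $P_{st}$: these are what tie the labeling of the fan to the parameterization of the director, and their invariance under any equivalence is what forces the various permutations to agree and the plane $\pi$ to be preserved.
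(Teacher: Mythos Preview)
Your plan follows the same strategy as the paper: both directions rest on Corollary~\ref{cor_2} and on the invariance of the compatibility points $P_{st}$ under collineations, combined with the explicit correspondence of Theorem~\ref{th_1}. A couple of differences are worth noting.

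In the forward direction the paper does not attempt to compare the two permutations $\sigma$ and $\tau$ via general $P_{st}$. Instead it first invokes \cite[Theorem~3.5]{boppr1} to normalize so that $\phi(\cO_0)=\cO'_0$, and then uses compatibility only at $P_{s0}=(0,0,1,f(s)/s)$; since the map $s'\mapsto f'(s')/s'$ is a bijection, this pins down $s'$ uniquely. Your version, arguing from $P'_{\sigma(s)\sigma(t)}=P'_{\tau(s)\tau(t)}$ for all $s\neq t$, still needs a line explaining why this forces $\sigma=\tau$: distinct unordered pairs $\{a,b\}$ can give the same slope $(f'(a)+f'(b))/(a+b)$, so the conclusion is not immediate without anchoring one value (which is exactly what the paper's normalization does).

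In the reverse direction the paper is much terser than you are: it simply asserts that ``the fan of $\cF$ is mapped via $\phi$ to the fan of $\cF'$ since the compatibility is preserved by the collineations of $\PG(3,q)$'', without singling out the issue of whether $\phi(\pi)=\pi$. You correctly flag this point. Your proposed remedy---compose with an element of the $\PGammaL(4,q)$-stabilizer of $(\cO',\Sigma')$ sending $\phi(\pi)$ back to $\pi$---is the natural move, but you should check that such an element actually exists: there is no general reason why the stabilizer of an arbitrary spread should act transitively on the $q$ type-(ii) points on the type-(b) line $(0,0,0,1)$. The paper sidesteps this by appealing (implicitly) to compatibility, so if you want a self-contained argument here you will either need to exhibit the required stabilizer element explicitly or, as the paper effectively does, argue that any two planes through the tangent line at $(0,0,0,1)$ yield posies that are already equivalent.
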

\begin{proof}
(i) Assume that $\cF$ and $\cF'$ are equivalent. Then, there is a collineation $\phi$ of $\PG(3,q)$ taking $\cO$ to $\cO'$ and the generalized fan of $\cF$ to the one of $\cF'$.  Hence, by Corollary \ref{cor_2}\,(i), $T_2(\cO)$ is isomorphic to $T_2(\cO')$. By Theorem \ref{th_1}, the oval $\cO_s\in\cF$ is associated with the point $(0,1,s,f(s))\in\cO$, for all $s\in\GF(q)$. Let $\phi(\cO_s)=\cO'_{s'}\in\cF'$. We may assume  $\phi(\cO_0)=\cO'_0\in\cF'$ from Theorem 3.5 in \cite{boppr1}. 
 Note that the ovals $\cO_s$, $s\neq0$, and $\cO_0$ are compatible at $(0,0,1,f(s)/s)$. As the compatibility  is invariant under collineations of $\PG(3,q)$, $\cO'_{s'}$ and $\cO'_0$ are compatible at $\phi((0,0,1,f(s)/s))=(0,0,1,f'(s')/s')$, giving that $\cO'_{s'}$ is the oval associated with the point $(0,1,s',f'(s'))\in\cO'$. This implies that $\Sigma$ and $\Sigma'$ are equivalent under $\phi$.
Conversely, let $\Sigma$ be a spread of $T_2(\cO)$, $\cO$ not a conic, and $\Sigma'$ a spread of $T_2(\cO')$, such that there is an isomorphism $\phi$ from  $T_2(\cO)$ to  $T_2(\cO')$ taking $\Sigma$ to $\Sigma'$. By Corollary \ref{cor_2}\,(i), $\phi(\cO)=\cO'$, and the fan of $\cF$ is mapped via $\phi$ to the fan  of $\cF'$ since the compatibility is preserved by the collineations of $\PG(3,q)$.

(ii) Here, we proceed as in (i), provided that Corollary \ref{cor_2}\,(ii) is applied to both ``if'' and ``only if '' statements. 
\end{proof}



%
For any  generator $\alpha$ of  $\Aut(\GF(q))$, the {\em $\alpha$-cone}  $\Gamma_\alpha$ is defined to be the set of points $\{(x_0,x_1,x_2,x_3):x_1^\alpha=x_0x_2^{\alpha-1}\}$ of $\PG(3,q)$ (here, $0^{\alpha-1}= 0$ by convention). Therefore, $\Gamma_\alpha$ is a cone having as vertex $(0,0,0,1)$ and as base an oval equivalent to $
\cD(\alpha)$. An {\em $\alpha$-flock} of $\Gamma_\alpha$ is a set of $q$ planes partitioning $\Gamma_\alpha$ minus  the vertex into disjoint ovals, which turn out to be all equivalent to $\cD(\alpha)$. This definition was introduced by Cherowitzo \cite{che} as a generalization of flocks of the quadratic cone, which correspond to 2-flocks. 
\begin{proposition}\cite[Corollary 4.2]{boppr1}\label{prop_1}
The set of planes $\{a_t x_0+b_tx_1+c_t x_2+x_3=0:t\in\GF(q)\}$ of $\PG(3,q)$ is an $\alpha$-flock if and only if  $\{b_t x_0+a_tx_1+c_t x_2+x_3=0:t\in\GF(q)\}$ is a $1/\alpha$-flock.
\end {proposition}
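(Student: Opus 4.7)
The plan is to reduce the $\alpha$-flock property to an explicit algebraic constraint on the difference of coefficients, and then to observe that this constraint is invariant under the simultaneous swap $(a_t,b_t,\alpha)\leftrightarrow(b_t,a_t,1/\alpha)$.

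First I would enumerate the non-vertex points of $\Gamma_\alpha$, using the defining equation $x_1^\alpha=x_0x_2^{\alpha-1}$ together with the convention $0^{\alpha-1}=0$. They comprise the $q$ ``main'' generators joining the vertex $V=(0,0,0,1)$ to the base points $(r^\alpha,r,1,0)$ for $r\in\GF(q)$, plus one ``infinity'' generator joining $V$ to $(1,0,0,0)$. Since the coefficient of $x_3$ in the plane $\pi_t\colon a_tx_0+b_tx_1+c_tx_2+x_3=0$ is $1$, the plane $\pi_t$ avoids $V$ and hence meets each generator in exactly one non-vertex point: at $s=-(a_tr^\alpha+b_tr+c_t)$ on the main generator labelled by $r$, and at $s=-a_t$ on the infinity generator. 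The partitioning requirement therefore reduces to the pair of conditions: \textbf{(i)} $t\mapsto a_t$ is a bijection of $\GF(q)$, and \textbf{(ii)} for every $r\in\GF(q)$, $t\mapsto a_tr^\alpha+b_tr+c_t$ is a bijection of $\GF(q)$. The identical analysis applied to $\Gamma_{1/\alpha}$ and the swapped planes $b_tx_0+a_tx_1+c_tx_2+x_3=0$ yields the $1/\alpha$-flock conditions \textbf{(i$'$)} $t\mapsto b_t$ bijective, and \textbf{(ii$'$)} $t\mapsto b_tr^{1/\alpha}+a_tr+c_t$ bijective for every $r\in\GF(q)$.

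Next I would rephrase these bijectivity requirements in difference form. For distinct $t_1,t_2\in\GF(q)$ put $A=a_{t_1}-a_{t_2}$, $B=b_{t_1}-b_{t_2}$, $C=c_{t_1}-c_{t_2}$. Then (i)--(ii) become: for every such pair, $A\neq 0$ and the polynomial $AX^\alpha+BX+C$ has no root in $\GF(q)$; while (i$'$)--(ii$'$) become: $B\neq 0$ and $BX^{1/\alpha}+AX+C$ has no root in $\GF(q)$. The bijective substitution $X=Y^{1/\alpha}$ of $\GF(q)$ sends the first polynomial to $AY+BY^{1/\alpha}+C$, which is the second, so the two ``no root'' clauses coincide. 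Under this common hypothesis I would then deduce $A\neq 0\Leftrightarrow B\neq 0$: if $A=0$ and $B\neq 0$ then $BY^{1/\alpha}+C$ vanishes at $Y=(-C/B)^\alpha\in\GF(q)$, contradicting ``no root'', and the reverse implication is symmetric. Combining the two equivalences completes the proof.

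The main obstacle is the separate treatment of the infinity generator: it produces the extra constraints $A\neq 0$ and $B\neq 0$ which are not captured by the no-root polynomial condition alone, and reconciling them via the contradiction above is what makes the equivalence non-tautological; once this is done, the rest is a direct algebraic substitution.
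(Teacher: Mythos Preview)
Your argument is correct. The paper does not supply its own proof of this proposition; it is quoted verbatim as Corollary~4.2 of \cite{boppr1} and used as a black box. So there is no in-paper proof to compare against.

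Your approach is a clean direct verification: parametrise the $q+1$ generators of $\Gamma_\alpha$, translate the partition condition into bijectivity of $t\mapsto a_t$ and of $t\mapsto a_tr^\alpha+b_tr+c_t$ for each $r$, pass to differences, and observe that the field automorphism $X\mapsto X^{1/\alpha}$ interchanges the two ``no root'' conditions. The one non-automatic step---reconciling the separate constraints $A\neq 0$ and $B\neq 0$ coming from the infinity generator---is handled correctly by noting that $A=0$, $B\neq 0$ (or vice versa) would force a root of the relevant polynomial. In \cite{boppr1} the statement is obtained as a corollary of a structural correspondence rather than by this bare-hands computation, so your route is more elementary and self-contained, at the cost of not exhibiting the underlying geometric duality.
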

Any $\alpha$-flock can be mapped to an $\alpha$-flock of the form $\{a_t x_0+t^{1/\alpha}x_1+c_t x_2+x_3=0:t\in\GF(q)\}$ by a collineation of $\PG(3,q)$ fixing $\Gamma_\alpha$. Such an $\alpha$-flock is said to be {\em normalized}.  There exists a link between normalized $\alpha$-flocks and certain generalized $f$-fans as described in the following theorem.
\begin{theorem}\cite[Theorem 1.2]{boppr1}\label{th_3}
Let  $\alpha$ be a generator of $\Aut(\GF(q))$. Let  $\cO_s=\{(1,t,t^{\alpha}+g(s)):t\in\GF(q)\}\cup \{(0,0,1)\}$. Then, $\{\cO_{s}:s \in \GF(q)\}$ is a generalized $f$-fan of ovals in $\PG(2,q)$ if and only if $\{f(t)x_0+t^{\alpha}x_1+g(t)x_2+x_3=0:t\in\GF(q)\}$ is a $1/\alpha$-flock.
\end{theorem}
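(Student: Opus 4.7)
My plan is to translate both sides of the biconditional into the same algebraic condition, and then match them via one short substitution.

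First I would verify the ``trivial'' ingredients of the generalized fan definition. Each $\cO_s$ is the image of the translation oval $\cD(x^{\alpha})$ under the elation $(x_0,x_1,x_2)\mapsto(x_0,x_1,x_2+g(s)x_0)$ with axis $x_0=0$, so each $\cO_s$ is an oval with nucleus $(0,1,0)$, passes through $(0,0,1)$, and has the line $x_0=0$ as its tangent at $(0,0,1)$. The condition $\cO_s\cap\cO_t=\{(0,0,1)\}$ for $s\neq t$ collapses to $g(s)\neq g(t)$. Since $f$ is an o-polynomial, $c:=(f(s)+f(t))/(s+t)\neq 0$ whenever $s\neq t$, so $P_{st}=(0,1,c)$ differs from both $(0,0,1)$ and the common nucleus $(0,1,0)$; the join $\langle P_{st},(0,0,1)\rangle$ is $x_0=0$, which is already the common tangent at $(0,0,1)$. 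Hence the only substantive requirement for compatibility at $P_{st}$ is that every secant to $\cO_s$ through $P_{st}$ be external to $\cO_t$.

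Next I would convert this requirement into a coset condition. Parametrise the $q$ lines through $P_{st}=(0,1,c)$ other than $x_0=0$ by $d\in\GF(q)$, where the line passes through $(1,0,d)$: the affine point $(1,\lambda,d+\lambda c)$ lies on $\cO_s$ iff $F_c(\lambda)=d+g(s)$, where $F_c(u):=u^{\alpha}+cu$ is $\GF(2)$-linear. Because $\alpha$ generates $\Aut(\GF(q))$, the map $u\mapsto u^{\alpha-1}$ is a bijection of $\GF(q)^*$, so $\ker F_c$ has order $2$ and $\Im F_c$ is an additive subgroup of $\GF(q)$ of index $2$. The line is secant to $\cO_s$ iff $d\in\Im F_c+g(s)$, and secant to $\cO_t$ iff $d\in\Im F_c+g(t)$; disjointness of these two cosets is equivalent to
\[
g(s)+g(t)\notin\Im F_c,
\]
i.e., to the equation $u^{\alpha}+cu=g(s)+g(t)$ having no solution $u\in\GF(q)$.

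For the flock side I would intersect each plane $\pi_t$ with $\Gamma_{1/\alpha}$. Its non-vertex points are $(x_0,x_0^{\alpha},1,v)$ and $(1,0,0,v)$ with $x_0,v\in\GF(q)$; the point $(1,0,0,v)$ lies on $\pi_t$ iff $v=f(t)$, and $(x_0,x_0^{\alpha},1,v)$ lies on $\pi_t$ iff $v=f(t)x_0+t^{\alpha}x_0^{\alpha}+g(t)$. The flock condition therefore amounts to $f$ being a permutation together with, for every $s\neq t$ and every $x_0\in\GF(q)$,
\[
(f(s)+f(t))x_0+(s^{\alpha}+t^{\alpha})x_0^{\alpha}+(g(s)+g(t))\neq 0.
\]
Using $(s+t)^{\alpha}=s^{\alpha}+t^{\alpha}$, the substitution $u:=(s+t)x_0$ (a bijection of $\GF(q)$ since $s+t\neq 0$) turns the above into $u^{\alpha}+cu+(g(s)+g(t))\neq 0$, which is exactly the coset condition derived in the previous paragraph. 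Matching the remaining ``constant'' parts ($f$ and $g$ being permutations) on both sides closes the biconditional. The main obstacle, I expect, is the coset analysis in the second paragraph: one must justify carefully that $\Im F_c$ really is an additive subgroup of index $2$ (this is where the hypothesis that $\alpha$ generates $\Aut(\GF(q))$ is used) so that the lines through $P_{st}$ split cleanly into secants and externals according to the two cosets. Once that is in place, the Frobenius-like identity $(s+t)^{\alpha}=s^{\alpha}+t^{\alpha}$ together with the substitution $u=(s+t)x_0$ delivers the equivalence essentially in one line.
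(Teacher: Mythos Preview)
The paper does not supply a proof of this theorem; it is quoted verbatim from \cite[Theorem~1.2]{boppr1} and used as a tool, so there is no ``paper's own proof'' to compare against. That said, your argument is a correct and self-contained proof of the statement.

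Your key reductions are all sound. On the fan side, the additive map $F_c(u)=u^{\alpha}+cu$ has kernel of order exactly $2$ because $\gcd(2^i-1,2^h-1)=2^{\gcd(i,h)}-1=1$ when $\alpha=2^i$ generates $\Aut(\GF(2^h))$; this is precisely the point where the hypothesis on $\alpha$ enters, and you identify it correctly. Hence the $q$ non-tangent lines through $P_{st}$ split into two cosets of $\Im F_c$, and compatibility becomes $g(s)+g(t)\notin\Im F_c$. On the flock side, your parametrisation of $\Gamma_{1/\alpha}\setminus\{(0,0,0,1)\}$ is right (do not forget the generator through $(0,0,1,0)$, which forces $g$ to be a permutation; this is the $x_0=0$ instance of your displayed inequality and hence already covered after the substitution $u=(s+t)x_0$). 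The substitution $u=(s+t)x_0$, together with $(s+t)^{\alpha}=s^{\alpha}+t^{\alpha}$ in characteristic $2$, then identifies the two conditions on the nose. The residual requirements---$f$ a permutation (automatic, since $f$ is an o-polynomial) and $g$ a permutation (equivalent to $\cO_s\cap\cO_t=\{(0,0,1)\}$ on one side and to disjointness on the generator $x_0=x_1=0$ on the other)---match as you say, and in fact both are already implied by the main inequality at $u=0$.
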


A generalized fan of translation ovals is said to be {\em axial} if the common tangent $x_0=0$ is an axis for at least one of the ovals. Note that the generalized fan  $\{\cO_s:s\in\GF(q)\}$ in the previous theorem is an axial  oval set, where the common tangent $x_0 =0$ is an axis for all ovals in the fan. Another equivalence turns  out, the one between axial generalized fans of translation ovals and those fans arising from normalized $\alpha$-flocks.
\begin{theorem}\cite[Theorem 4.6]{boppr1}\label{th_2}
A generalized fan of translation ovals is axial if and only if it arises from a normalized  $\alpha$-flock,  for some  generator $\alpha$ of $\Aut(\GF(q))$.
\end{theorem}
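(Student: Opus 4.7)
The plan is to prove both directions by leveraging Theorem \ref{th_3}, which is the bridge between fans whose ovals have the form $\{(1,t,t^\alpha + g(s)) : t \in \GF(q)\} \cup \{(0,0,1)\}$ and normalized $1/\alpha$-flocks.

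The ``if'' direction is an almost immediate consequence of Theorem \ref{th_3}: starting from a normalized $\alpha$-flock $\{a_t x_0 + t^{1/\alpha} x_1 + c_t x_2 + x_3 = 0 : t\in\GF(q)\}$, I apply Theorem \ref{th_3} with $1/\alpha$ in the role of $\alpha$ to conclude that each oval of the fan has the form $\cO_s = \{(1, t, t^{1/\alpha} + g(s)) : t\} \cup \{(0, 0, 1)\}$. Each such oval is the image of $\cD(1/\alpha)$ under the $x_2$-translation $(x_0,x_1,x_2) \mapsto (x_0,x_1,x_2+g(s)x_0)$, hence is itself a translation oval sharing $x_0=0$ as an axis. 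In particular, $x_0 = 0$ is an axis for every oval in the fan, so the fan is axial.

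For the ``only if'' direction, I begin with an axial generalized $f$-fan $\{\cO_s\}$ of translation ovals, and (after reindexing) take $\cO_0$ to be one having $x_0 = 0$ as axis. By Payne's characterization of translation ovals, combined with a coordinate normalization using the stabilizer of $(0,1,0)$, $(0,0,1)$, and $x_0=0$ in $\PGL(3,q)$, I may assume $\cO_0 = \{(1, t, t^\alpha) : t\} \cup \{(0, 0, 1)\}$ for some generator $\alpha$ of $\Aut(\GF(q))$. The central task is then to show that every other $\cO_s$ has the form $\{(1, t, t^\alpha + g(s)) : t\} \cup \{(0,0,1)\}$; once this is in hand, Theorem \ref{th_3} applied in reverse yields the normalized $1/\alpha$-flock $\{f(t) x_0 + t^\alpha x_1 + g(t) x_2 + x_3 = 0 : t\}$, completing the proof (with the generator in the statement being $1/\alpha$).

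To carry out this central task, I write $\cO_s = \{(1, t, h_s(t)) : t\} \cup \{(0,0,1)\}$ and parameterize the lines through the compatibility point $P_{s,0} = (0,1,f(s)/s)$ other than $x_0=0$ as $L_A: A x_0 + (f(s)/s) x_1 + x_2 = 0$. The compatibility condition translates to the assertion that the image of $t \mapsto t^\alpha + (f(s)/s) t$ and the image of $t \mapsto h_s(t) + (f(s)/s) t$ are complementary subsets of $\GF(q)$. Since the first image is a codimension-$1$ $\GF(2)$-linear subspace (the image of an additive map with two-element kernel), the second must be a coset of it. Combining this rigidity with the fact that $\cO_s$ is a translation oval -- so that, once its axis is identified, $h_s$ takes the form $K t^{\beta_s} + c_s$ for some Frobenius generator $\beta_s$ and constants $K$, $c_s$ -- should force $\beta_s = \alpha$, $K = 1$, and $h_s(t) = t^\alpha + g(s)$. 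The main obstacle will be exactly this rigidity step: ruling out the possibility that $\cO_s$ has an axis distinct from $x_0 = 0$, and then showing that its Frobenius exponent $\beta_s$ must coincide with $\alpha$. Both conclusions depend on a careful classification of which codimension-$1$ $\GF(2)$-linear subspaces of $\GF(q)$ can arise as images of the maps $t \mapsto K t^\beta + c t$, and on showing that the compatibility constraint pins down $(\beta, K) = (\alpha, 1)$.
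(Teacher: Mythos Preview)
The paper does not prove this statement: Theorem~\ref{th_2} is quoted from \cite[Theorem 4.6]{boppr1} and no argument is supplied here, so there is no in-paper proof to compare against. What follows are comments on your outline itself.

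Your ``if'' direction is fine and is exactly the observation the paper makes just before stating the theorem: the ovals $\{(1,t,t^{1/\alpha}+g(s)):t\in\GF(q)\}\cup\{(0,0,1)\}$ produced by Theorem~\ref{th_3} are translates of $\cD(1/\alpha)$, hence translation ovals all sharing the axis $x_0=0$.

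For the ``only if'' direction your plan is the natural one, but the gap you flag is real and is the entire content of the result. Two specific points. First, the normalization of $\cO_0$ to $\cD(\alpha)$ uses a collineation fixing $(0,1,0)$, $(0,0,1)$ and the line $x_0=0$; you should note that such a collineation sends the generalized $f$-fan to a generalized $f'$-fan for some other o-polynomial $f'$ (this is the content of \cite[Theorem 3.5]{boppr1}), so the reduction is legitimate but the o-polynomial changes. Second, and more seriously, your assertion that ``once its axis is identified, $h_s$ takes the form $K t^{\beta_s}+c_s$'' already presupposes that the axis of $\cO_s$ is $x_0=0$; if the axis were a different tangent through the nucleus $(0,1,0)$, the function $h_s$ would not have this shape at all, and your coset-of-a-hyperplane analysis of the compatibility condition would have to be run against a genuinely different family of functions. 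So the two obstacles you list --- forcing the axis to be $x_0=0$ and forcing $\beta_s=\alpha$, $K=1$ --- are not parallel refinements of the same calculation; the first must be settled before the form $Kt^{\beta_s}+c_s$ is even available. Your proposal, as written, is a correct identification of where the work lies rather than a proof.
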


Let $q$ be any prime power. A (normalized) $q$-{\em clan} is a set $\cC=\{A_t:t\in\GF(q)\}$ of $q$ $2\times 2$ matrices $A_t=\begin{pmatrix}a_t & t^{1/2} \\ 0 & b_t  \end{pmatrix}$ over $\GF(q)$, such that $A_0$ is the zero matrix and $A_s-A_t$ is anisotropic, for all $s\neq t$, that is $\u(A_s-A_t)\u^T=0$ if and only if $\u=0$. It is known that the set $\cC$ is a (normalized) $q$-clan if and only if $\cF(\cC)=\{a_tX_0+t^{1/2}X_1+b_tX_2+X_3=0:t\in\GF(q)\}$ is a flock of the quadratic cone \cite{thas1}.
\\Two (normalized) $q$-clans $\cC=\{A_t:t\in\GF(q)\}$ and $\cC'=\{A'_t:t\in\GF(q)\}$ are {\em equivalent} if there are $\lambda\in\GF(q)\setminus\{0\}$, $B\in\GL(2,q)$, $\sigma\in\Aut(\GF(q))$ and a permutation $\pi:t\mapsto  \bar t$ of $\GF(q)$ such that  
\[
A'_{\bar t}=\lambda BA_t^\sigma B^T+ A'_{\bar 0},
\]
for all $t\in\GF(q)$.  The definition of $q$-clan can be generalized to that of $\alpha$-clan, with $\alpha$ any automorphism of $\GF(q)$; this was done in \cite{che}.
\\Any given (normalized) $q$-clan $\cC$ gives rise to a generalized quadrangle $\mathrm{GQ}(\cC)$  of order $(q^2,q)$ which is an  elation GQ about the (elation) point $(\infty)$. For more details on $\mathrm{GQ}(\cC)$  we refer the reader to  \cite[A.1.1, A.1.2]{pt} and \cite{pay96}. Because of the connection between $q$-clans and flocks of the quadratic cone, a $\mathrm{GQ}(\cC)$ is called a {\em flock quadrangle}. 
\begin{theorem}\cite{pay96}\label{th_13} {\em(The Fundamental theorem of $q$-clan geometry)}
For any prime power $q$, let $\cC$ and
$\cC'$ be two (normalized) $q$-clans. Then, the following are equivalent: 
\begin{itemize}
 \item[(i)] $\cC$ and $\cC'$ are equivalent.
\item[(ii)] The flocks of the quadratic cone $\cF(\cC)$ and $\cF(\cC')$ are projectively equivalent.
\item[(iii)] $\mathrm{GQ}(\cC)$ and $\mathrm{GQ}(\cC')$ are isomorphic by an isomorphism mapping $(\infty)$ to $(\infty)$, $[A(\infty)]$ to $[A'(\infty)]$, and $(\underline 0, 0, \underline0)$ to $(\underline0, 0, \underline0)$\footnote{Here, we adopt the notation from \cite{pay96}: $[A(\infty)]$ is the set of lines of $\mathrm{GQ}(\cC)$ through $(\infty)$, and $\underline 0=(0,0)\in \GF(q)^2$}.
\end{itemize}
\end{theorem}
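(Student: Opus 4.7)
The plan is to prove the two cycles (i) $\Leftrightarrow$ (ii) and (ii) $\Leftrightarrow$ (iii) separately; the bridge $\cC \leftrightarrow \cF(\cC)$ between clans and flocks stated just before the theorem, together with the construction of $\mathrm{GQ}(\cC)$ from its clan recalled from \cite[A.1.1, A.1.2]{pt}, organizes the whole argument.

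For (i) $\Leftrightarrow$ (ii), I would analyze the stabilizer in $\PGammaL(4,q)$ of the quadratic cone together with its vertex $(0,0,0,1)$. The quadruple $(\lambda, B, \sigma, \pi)$ in the definition of clan equivalence is tailored precisely to mirror the action of such a collineation: $\sigma$ is the field-automorphism part; $B \in \GL(2,q)$ is the linear action induced on the $2 \times 2$ block of plane coefficients via the transformation of the base conic; $\lambda$ absorbs projective rescaling; and $\pi$ records how the $q$ planes are relabelled. The implication (i) $\Rightarrow$ (ii) is then a direct matrix computation showing that the collineation built from $(\lambda, B, \sigma, \pi)$, pre-composed with an elation of the cone that accounts for the additive correction $A'_{\bar 0}$, carries $\cF(\cC)$ onto $\cF(\cC')$. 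Conversely, (ii) $\Rightarrow$ (i) amounts to decomposing an arbitrary flock-preserving collineation into these four pieces and reading off the constraints imposed on the matrices $A_t$.

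For (ii) $\Leftrightarrow$ (iii), I would use the fact that the points and lines of $\mathrm{GQ}(\cC)$ are parameterized explicitly by the clan data, and that the triple $\bigl((\infty),\ [A(\infty)],\ (\underline 0, 0, \underline 0)\bigr)$ encodes exactly the ``coordinate frame'' needed to read off $\cC$ from $\mathrm{GQ}(\cC)$ up to the equivalence in (i). Any GQ-isomorphism preserving the triple therefore induces a bijection of clan matrices satisfying the defining relation of $q$-clan equivalence; conversely, every clan equivalence lifts to such a GQ-isomorphism by transport of structure, and the projective collineation of $\PG(3,q)$ constructed for (i) $\Rightarrow$ (ii) already realizes this lift geometrically.

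The main obstacle is the implication (iii) $\Rightarrow$ (i)/(ii): passing from a purely combinatorial GQ-isomorphism back to a projectivity of $\PG(3,q)$ is nontrivial, and it explains precisely why all three distinguished objects in (iii) must be preserved---the triple fixes just enough of the elation structure at $(\infty)$ to pin down the ambient projective model. Carrying out this recovery (by exhibiting the elation group of $\mathrm{GQ}(\cC)$ at $(\infty)$ as a linear group acting on $\PG(3,q)$, and tracing how its orbits on the distinguished line class $[A(\infty)]$ recover the base conic of the cone) is the technical heart of Payne's original argument in \cite{pay96}.
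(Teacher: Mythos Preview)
The paper does not give a proof of this theorem at all: it is stated as a quotation of Payne's result, with the attribution \cite{pay96} attached directly to the theorem environment, and is followed immediately by Remark~\ref{rem_3} with no intervening \texttt{proof} environment. So there is no ``paper's own proof'' to compare your proposal against; the authors simply import the Fundamental Theorem as a known tool.

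Your sketch is a reasonable outline of how Payne's argument proceeds, and you correctly identify (iii) $\Rightarrow$ (i) as the delicate direction. But since the present paper treats the statement as a black box, nothing in your write-up is needed here; if you want to supply a full proof you should do so as an independent exposition of \cite{pay96} rather than as a contribution to this paper's arguments.
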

\begin{remark}\label{rem_3}
{\em 
We emphasize  that  a flock quadrangle gives rise to an equivalence class of flocks of the quadratic cone for each orbit on the lines through $(\infty)$ \cite{blp}.
}
\end{remark}
Another correspondence that plays an important role in this paper is that between flock quadrangles and herds of ovals in $\PG(2,q)$. 

A {\em herd} of ovals in $\PG(2,q)$, $q\ge 4$ even, is a family of $q+1$ ovals (not necessarily distinct) $\{\cO_s:s\in\GF(q)\cup\{\infty\}\}$, each of which  has nucleus $(0,0,1)$, contains the points $(1,0,0)$, $(0,1,0)$, $(1,1,1)$, and such that
\begin{align*}
 \cO_\infty  =  \{(1,t,f_\infty(t)):t\in\GF(q)\}\cup \{(0,1,0)\}\\[0.1in]
 \cO_s    =  \{(1,t,f_s(t)):t\in\GF(q)\}\cup \{(0,1,0)\}, & \,\,\,\, s\in\GF(q),
\end{align*}
where
\[
f_s(t)=\frac{f_0(t)+\kappa sf_\infty(t)+s^{1/2} t^{1/2}}{1+\kappa s+s^{1/2}},
\]
for some $\kappa\in\GF(q)$ with $\Tr(\kappa)=1$; here and below, $\Tr$ stays for the absolute trace function of $\GF(q)$. We will use $\cH(f_0,f_\infty)$ to denote the herd defined by $f_0$ and $f_\infty$.
\begin{remark}\label{rem_4}
\em{In \cite{okp2}, the notation $\cD(f)$, with $f$ an o-permutation, is used for the oval $\{(1,t,f(t)):t\in\GF(q)\}\cup\{(0,1,0)\}$, with nucleus $(0,0,1)$. Since in the present paper, $\cD(f)$ is used for the oval $\{(1,t,f(t)):t\in\GF(q)\}\cup\{(0,0,1)\}$, we will use  $\cO(f)$ for the oval $\{(1,t,f(t)):t\in\GF(q)\}\cup\{(0,1,0)\}$ to avoid confusion. Note that, under the collineation $\tau:(x_0,x_1,x_2)\mapsto(x_0,x_2,x_1)$, $\cO(f)$ is mapped, with the current notation,  to $\cD(f^{-1})$, with nucleus $(0,1,0)$. In the following, for the sake of simplicity, depending on the context,  we will identify the ovals $\cO(f)$ and $\cD(f^{-1})$.} 
\end{remark}

In  \cite[Lemma 1]{okp2} an action of $\PGammaL(2,q)$, $q$ even, on the vector space of the functions over $\GF(q)$ is defined: it is called the {\em magic action}.  Let $\mathfrak F$ be the vector space of all functions $f:\GF(q)\rightarrow \GF(q)$ such that $f(0)=0$. It is well known that each element of $\mathfrak F$ can be written as a polynomial in one variable of degree at most $q-1$. Furthermore, if $f(x)=\sum{a_ix^i}$ and $\gamma\in\Aut(\GF(q))$, then  $f^\gamma(x)=\sum{a_i^\gamma x^i}$.

Any element $\psi=(A,\gamma)\in\PGammaL(2,q)$, with $A=\begin{pmatrix}a& b \\ c & d\end{pmatrix}$ and $\gamma\in\Aut(\GF(q))$, acts on $\mathfrak F$ by mapping $f$ to $\psi f$, where 
\[
\psi f(x)=|A|^{-1/2}\left[(bx+d)f^\gamma\left(\frac{ax+c}{bx+d}\right)+b\,x\,f^\gamma\left(\frac{a}{b}\right)+d\,f^\gamma\left(\frac{c}{d}\right)\right];\]
here, if $b=0$ then $bf^\gamma(a/b) = 0$ by convention (and similarly for $df^\gamma(c/d)$).

This action of $\PGammaL(2,q)$ on $\mathfrak F$ is called  {\em magic action}.

\begin{theorem}\cite[Theorem 4,Theorem 6]{okp2}\label{th_14}
The magic action preserves the set of o-permutations, and for $g=\psi f$ we have that  $\cO(f)$ and $\cO(g)$ are projectively equivalent under a collineation $\overline\psi$ of $\PG(2,q)$ induced by $\psi$. Conversely, if $\cO(f)$ and $\cO(g)$, with $f,g$ o-permutations, are equivalent under $\PGammaL(3,q)$, then there is $\psi\in\PGammaL(2,q)$ such that $\psi f\in\<g\>$.
\end{theorem}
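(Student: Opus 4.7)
The plan is to prove the two halves of the statement separately, corresponding to the two theorems cited from \cite{okp2}.

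For the forward direction, given $\psi=(A,\gamma)\in\PGammaL(2,q)$ with $A=\binom{a\ b}{c\ d}$ and an o-permutation $f$, I would construct an explicit lift $\overline{\psi}\in\PGammaL(3,q)$ that fixes the nucleus $(0,0,1)$ and induces on the pencil of tangents through $(0,0,1)$ the Möbius action $x\mapsto(ax+c)/(bx+d)$. The natural ansatz is to let $\overline{\psi}$ act as $\gamma$ coordinatewise followed by multiplication (on row vectors) by
\[
M=\begin{pmatrix} a & c & m_{13}\\ b & d & m_{23}\\ 0 & 0 & |A|^{1/2}\end{pmatrix},
\]
the top-left block being forced by the required Möbius action. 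The entries $m_{13},m_{23}$ are then determined by requiring that the image of a generic point $(1,t,f(t))\in\cO(f)$ land on $\cO(\psi f)$: matching third coordinates after projection (using the characteristic-two identity $(bs+d)(a+b\psi(s))=|A|$) yields a $2\times 2$ linear system whose solution expresses $m_{13},m_{23}$ in closed form in terms of $a,b,c,d,|A|^{1/2}$ and the values $f^\gamma(a/b),f^\gamma(c/d)$, with the usual convention that $bf^\gamma(a/b)=0$ when $b=0$ (and similarly for $d$). A direct substitution then verifies that this $\overline{\psi}$ maps $\cO(f)$ onto $\cO(\psi f)$ (the point at infinity $(0,1,0)$ requiring a separate check). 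Since $\overline{\psi}(\cO(f))=\cO(\psi f)$ is the image of an oval under a collineation, it is an oval, so $\psi f$ is an o-permutation and $\cO(f),\cO(\psi f)$ are projectively equivalent via $\overline{\psi}$, giving both assertions of the first half.

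For the converse, let $\phi\in\PGammaL(3,q)$ send $\cO(f)$ to $\cO(g)$. In characteristic two the nucleus is intrinsic to an oval, so $\phi$ fixes $(0,0,1)$ and therefore stabilizes the pencil of lines through it; the induced action on this pencil is an element $\psi\in\PGammaL(2,q)$. By the forward direction, $\overline{\psi}$ also sends $\cO(f)$ to $\cO(\psi f)$ inducing the same $\psi$, so $\tau:=\phi\circ\overline{\psi}^{-1}$ sends $\cO(\psi f)$ to $\cO(g)$, fixes $(0,0,1)$, and induces the identity on the pencil. An elementary argument shows any such collineation is a central collineation with center $(0,0,1)$ whose matrix has the form $\bigl(\begin{smallmatrix}1&0&n_{13}\\0&1&n_{23}\\0&0&\mu\end{smallmatrix}\bigr)$ (with trivial field automorphism, since the identity on the pencil forces this). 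The key step is that both $(1,0,0)$ and $(0,1,0)$ lie in each of $\cO(\psi f)$ and $\cO(g)$ (using $(\psi f)(0)=g(0)=0$ and the uniqueness of the oval's point at infinity), so evaluating $\tau$ at these two points forces $n_{13}=n_{23}=0$. Hence $\tau=\mathrm{diag}(1,1,\mu)$, and reading off third coordinates yields $g=\mu\cdot\psi f$, i.e.\ $\psi f\in\<g\>$.

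The main obstacle lies in the forward direction: the direct verification that the ansatz $M$ sends $(1,t,f(t))$ onto $\cO(\psi f)$ for all $t$ is a nontrivial computation centered on the characteristic-two Möbius identity above, and requires careful handling of the cases $b=0$ and $d=0$. In the converse, the main conceptual point is that the two ``coordinate points'' $(1,0,0)$ and $(0,1,0)$ cut out the diagonal subgroup of central collineations preserving the oval normalization, thereby eliminating any shearing contribution in $\tau$ and placing $\psi f$ in $\<g\>$ exactly.
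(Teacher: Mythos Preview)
The paper does not supply its own proof of this theorem; it is quoted verbatim from \cite{okp2} (Theorems~4 and~6 there) and used as a black box throughout Section~\ref{sec_3}. There is therefore no in-paper argument to compare against.

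Your reconstruction is correct and is essentially the proof one finds in the cited source. For the forward direction, the block-upper-triangular lift
\[
M=\begin{pmatrix} a & c & m_{13}\\ b & d & m_{23}\\ 0 & 0 & |A|^{1/2}\end{pmatrix}
\]
with $m_{13},m_{23}$ determined by matching third coordinates is exactly the right ansatz, and the identity you invoke amounts to $(bs+d)\bigl(a+b\tfrac{as+c}{bs+d}\bigr)=ad+bc$ in characteristic two; the remaining case analysis ($b=0$, $d=0$, and the single parameter value where $bs+d=0$) is routine. For the converse, your reduction---$\phi\circ\overline{\psi}^{\,-1}$ fixes the nucleus and the pencil through it, hence has trivial automorphism part and matrix $\mathrm{diag}(1,1,\mu)$ after evaluating at $(1,0,0)$ and $(0,1,0)$---is precisely the argument that yields $\psi f\in\langle g\rangle$.

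One cosmetic point: writing $\psi(s)$ for the M\"obius image of $s$ clashes with the magic-action notation $\psi f$; naming that image $u$ (as you do elsewhere) avoids confusion.
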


Since the magic action preserves the set of o-permutations, we say that two  o-permutations $f$ and $g$ are {\em equivalent} if they correspond under the magic action, i.e., there exists $\psi\in \PGammaL(2,q)$ such that $\psi f\in\<g\>=\{\lambda g:\lambda\in\GF(q)\}$.

Two herds $\cH(f_0,f_\infty)$ and $\cH(f'_0,f'_\infty)$ are {\em isomorphic} if there exists $\psi\in\PGammaL(2,q)$ such that for all $s\in\GF(q)\cup \{\infty\}$ we have $\psi f_s\in \<f_t'\>$, for some $t\in\GF(q)$, where the induced map $s \mapsto t$ is a permutation of $\GF(q) \cup \{\infty\}$.
%

%

In \cite{cppr} it is proved that a herd of ovals $\cH(f_0,f_\infty)$ gives rise to a (normalized) $q$-clan $\cC=\{A_t:t\in\GF(q)\}$, with
\[
A_t= \begin{pmatrix}
f_0(t) & t^{{1/2}}\\
0 & \kappa f_\infty(t)
\end{pmatrix}.
\] 
Conversely, such a (normalized) $q$-clan $\cC$ is shown to correspond to the  herd  of ovals $\cH(\cC)=\cH(f_0,f_\infty)$. 
\\In light of Theorem 16 in \cite{okp2}, for $q$ even, we can re-write the Fundamental Theorem enriched with the following equivalence:
\begin{quote} \emph{The herds $\cH(\cC)$ and $\cH(\cC')$ are isomorphic if and only if $\mathrm{GQ}(\cC)$ and $\mathrm{GQ}(\cC')$ are isomorphic by an isomorphism mapping $(\infty)$ to $(\infty)$ and $(\underline 0, 0, \underline0)$ to $(\underline0, 0, \underline0)$.}
\end{quote} 
\begin{remark}\label{rem_5}
{\em Let $\cH(\cC)$, $\cH(\cC')$ be two (not necessarily distinct) herds such that both the  automorphism groups of  $\mathrm{GQ}(\cC)$ and $\mathrm{GQ}(\cC')$ have just one  orbit on the lines through $(\infty)$. Then  $\cH(\cC)$ and $\cH(\cC')$ are isomorphic if and only if the flocks of the quadratic cone $\cF(\cC)$ and $\cF(\cC')$ are equivalent.
}
\end{remark}
All the above equivalences have been used to classify the spreads of $T_2(\cO)$, for $\cO$ an oval in $\PG(2,q)$, for $q=2,4,8,16,32$.  For $q=2,4$, the only ovals in $\PG(2,q)$ are conics \cite{segre57}, and so the classification of spreads of $T_2(\cO)$ is equivalent to the one of ovoids of $\PG(3,q)$: they are elliptic quadrics \cite{bose47,seid50}. The only ovals in $\PG(2,8)$ are  conics and  pointed conics \cite{segre57}. As in both cases  $T_2(\cO)$ is self-dual, to classify spreads of $T_2(\cO)$ is equivalent to classify ovoids in these GQs. This classification can be found in \cite[III.17.8]{pin}. The classification of spreads of $T_2(\cO)$ for $q=16,32$ is discussed in \cite{boppr}.    
\begin{theorem} 
All spreads of $T_2(\cO)$, for $\cO$ an oval in $\PG(2,q)$, $q$ even, are classified for $q\in\{2,4,8,16,32\}$.
\end{theorem}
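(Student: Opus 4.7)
The statement is a summary of existing classification results for small even orders, so the plan is essentially an assembly of cases using results already recalled in Section~\ref{sec_2} plus citations.

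\textbf{Cases $q\in\{2,4\}$.} First I invoke Segre's theorem that every oval in $\PG(2,q)$, $q\in\{2,4\}$, is a conic. Thus the only Tits quadrangle is $T_2(\cC)$ which, as noted in the preliminaries, is isomorphic to $W(q)$. By the correspondence $T_2(\cC)\leftrightarrow W(q)$, spreads of $T_2(\cC)$ correspond bijectively to ovoids of $W(q)$, and these in turn correspond to ovoids of $\PG(3,q)$ (since for $q$ even $W(q)$ is self-dual and every ovoid of $\PG(3,q)$ lies in some $W(q)$). For $q=2,4$, every ovoid of $\PG(3,q)$ is an elliptic quadric by the classical theorems of Bose and Seidel, hence there is a unique spread up to equivalence.

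\textbf{Case $q=8$.} By Segre's theorem for $q=8$, every oval in $\PG(2,8)$ is either a conic $\cC$ or a pointed conic $\cD(x^{1/2})=\cD(x^4)$. In both cases I use that $T_2(\cO)$ is self-dual: for the conic $T_2(\cC)\cong W(8)$ is classical and self-dual; for the pointed conic, $\cO$ is a translation oval so by~\cite[12.5.2]{pt} (recalled in Section~\ref{sec_2}) $T_2(\cO)$ is self-dual. Hence in both cases spreads of $T_2(\cO)$ correspond to ovoids of $T_2(\cO)$, and the classification of the latter can be read off from \cite[III.17.8]{pin}.

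\textbf{Cases $q\in\{16,32\}$.} Here I cite~\cite{boppr}, where the spreads of $T_2(\cO)$ have already been classified via the generalized-fan machinery (Theorem~\ref{th_1}), the equivalence between axial fans and normalized $\alpha$-flocks (Theorem~\ref{th_2}, Theorem~\ref{th_3}) and, for the non-axial constructions, the herd-based correspondence with flock quadrangles together with the Fundamental Theorem~\ref{th_13}. The present paper's only corrections relative to \cite{boppr} are the one-line table correction mentioned in the introduction and the clarification concerning posies introduced in this section.

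\textbf{Obstacle.} Since this is a survey-style statement, no step is genuinely hard: the main care is in correctly handling the conic versus non-conic dichotomy of Corollary~\ref{cor_2}, so that the equivalence of spreads is not conflated with the equivalence of pairs $(T_2(\cO),\Sigma)$. This is precisely the subtlety that motivated the introduction of posies and Theorem~\ref{th_5} above, and it is what one must invoke to translate the ovoid classifications cited from \cite{pin,bose47,seid50} into a spread classification intrinsic to $T_2(\cO)$.
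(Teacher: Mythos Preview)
Your proposal is correct and follows essentially the same approach as the paper: the theorem there is stated as a summary of known results, with the preceding paragraph supplying exactly the case split and citations you give (Segre for the ovals, Bose--Seiden for ovoids when $q=2,4$; self-duality plus \cite{pin} for $q=8$; \cite{boppr} for $q=16,32$). Your added remarks about posies and Theorem~\ref{th_5} are extraneous here, since the paper introduces that machinery for the $q=64$ analysis rather than for these small cases, but they do no harm.
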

In the next section, we provide the classification of spreads of $T_2(\cO)$ for $q=64$.

\section{The classification} \label{sec_3}
Our aim in this section is to classify, with the aid of a computer, all spreads of $T_2(\cO)$, for $\cO$ an oval in $\PG(2,64)$, by taking advantage of the equivalence between pairs $(T_2(\cO),\Sigma)$, $\Sigma$ a spread of $T_2(\cO)$, and posies of ovals in $\PG(3,64)$.

 Our programs, written in MAGMA \cite{magma}, were run on a macOS system with one quad-core Intel Core i7 2.2 GHz processor. All the computational operations take approximately 39 hours of CPU time. The code is available on email request to the third author.

In $\PG(2,64)$ there are, up to equivalence, exactly four hyperovals \cite{vander}: the regular hyperoval,  the Subiaco I hyperoval, the Subiaco II hyperoval, the Adelaide hyperoval, whose corresponding o-polynomials are those given in \cite[pp.10--11]{mps}. Precisely,  the regular hyperoval is defined by the o-polynomial  $f_L(x)=x^{1/2}$; for any $\delta\in\GF(q)$, with $\delta^2 + \delta + 1\neq 0$ and $\Tr(1/\delta) = 1$, the Subiaco I hyperoval is defined by the o-polynomial
\[
f_{S_I}(t)=\frac{\delta^2(t^4 +t)+\delta^2(1+\delta+\delta^2)(t^3 +t^2) }{(t^2 + \delta t + 1)^2}+t^{1/2},
\]
and the Subiaco II hyperoval is defined by 
\[
f_{S_{II}}(t)=\frac{f_{S_I}(t)+\kappa ag(t)+a^{1/2} t^{1/2}}{1+\kappa a+a^{1/2}},
\]
where $\kappa\in\GF(q)$ with absolute trace of $\kappa$ equal 1, $a\in\GF(q)$  with  $a^2+a+1=0$, and  
\[
g(t)=	\frac{\delta^4t^4 +\delta^3(1+\delta^2 +\delta^4)t^3 +\delta^3(1+\delta^2)t}{	(\delta^2 +\delta^5 +\delta^{1/2})(t^2 +\delta t +1)^2}	+ \frac{\delta^{1/2}}{(\delta^{2} +\delta^5 +\delta^{1/2})}t^{1/2};
\]
 the Adelaide hyperoval is defined by 
\[
f_A(t)=\frac{T(\beta^m)(t+1)}{T(\beta)}+\frac{T((\beta t+\beta^q)^m)}{T(\beta)(t+T(\beta)t^{1/2}+1)^{m-1}}+t^{1/2},
\]
where $m\equiv \pm\frac{q-1}{3}(\mod\, q+1)$, $\beta\in\GF(q^2)\setminus\{1\}$ with $\beta^{q+1}=1$ and $T(x)=x+x^q$, for all $x\in\GF(q^2)$.

Since every oval lies in a unique hyperoval,  all the ovals in $\PG(2,64)$, up to equivalence, can be obtained by removing one representative from each point-orbit of the stabilizer in $\PGammaL(3,64)$ of each hyperoval, considered  as a permutation group acting on it. This operation produces exactly 19 distinct ovals in $\PG(2,64)$ \cite{pen}. 

According to the definition of a generalized fan, and following the idea adopted in \cite{okpr}, for  $\cO$  any of the 19 ovals, and $t$ any tangent line to $\cO$,  we check if for each point $R$ on $t\setminus(\cO\cup\{N\})$ there exist an oval $\cO'$ in $\PG(2,64)$ that is compatible with $\cO$ at $R$. This implies that $\cO$ could be in a generalized $f$-fan such that $t$ is the common tangent to the ovals of the fan.

The compatibility condition can be relaxed by using the following definition. Consider two ovals $\cO_1$, $\cO_2$ and two points $P_1$, $P_2$ such that $P_i$ is not in $\cO_i$ and different from the nucleus of $\cO_i$, $i=1,2$. The pairs $(\cO_1,P_1)$ and $(\cO_2,P_2)$ {\em match} if there is a collineation $g$ for which $g(P_1)=P_2$ and $g(\cO_1)$ and $\cO_2$ are compatible at $P_2$.
 
The lines through any point $P$ in $\PG(2,q)$ can be parameterized with the elements of $\GF(q)\cup\{\infty\}$.  For each oval-point pair $(\cO,P)$, this parameterization is chosen so that $\infty$ is assigned to the tangent line to $\cO$ on $P$, and it is used in the following result to  characterize the matching condition. 
\begin{theorem}\cite[Theorem 3.3]{okpr}\label{th_18}
Let be $(\cO_1,P_1)$, $(\cO_2,P_2)$ be two oval-point pairs. Let $S_1$  be the set of parameters of the secant lines to $\cO_1$ through $P_1$, and $E_2$  the set of parameters of the  external lines to $\cO_2$ through $P_2$.   Then, the pairs $(\cO_1,P_1)$ and $(\cO_2,P_2)$  match if and only if there is $g\in\AGammaL(1,q)$ such that $gE_2=S_1$.
\end{theorem}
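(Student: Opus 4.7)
The plan is to reduce matching at the level of $\PG(2,q)$-collineations to a statement about induced maps on pencils of lines, which are naturally elements of $\PGammaL(2,q)$ acting on $\PG(1,q) \cong \GF(q) \cup \{\infty\}$, and to exploit the fact that the subgroup of $\PGammaL(3,q)$ stabilizing the pair $\{P_i, N_i\}$ induces exactly $\AGammaL(1,q)$ on the pencil through $P_i$.

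For the forward implication, suppose there is a collineation $\Phi$ of $\PG(2,q)$ with $\Phi(P_1) = P_2$ making $\Phi(\cO_1)$ and $\cO_2$ compatible at $P_2$. Compatibility forces $\Phi(N_1) = N_2$, so $\Phi$ sends the tangent $\langle P_1, N_1 \rangle$ to $\langle P_2, N_2\rangle$. The induced map $\phi$ on pencils thus lies in $\PGammaL(2,q)$ and fixes $\infty$, giving $\phi \in \AGammaL(1,q)$. Moreover, $\Phi$ preserves intersection multiplicities, so it carries secants of $\cO_1$ through $P_1$ to secants of $\Phi(\cO_1)$ through $P_2$; by the last clause of the compatibility definition, these are precisely external lines to $\cO_2$. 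Since $|S_1| = |E_2| = q/2$, the inclusion $\phi(S_1) \subseteq E_2$ is an equality, and $g := \phi^{-1}$ is the desired element of $\AGammaL(1,q)$ with $gE_2 = S_1$.

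For the reverse implication, given $g \in \AGammaL(1,q)$ with $gE_2 = S_1$, I construct $\Phi$ in two steps. First, using that $\PGammaL(3,q)$ is transitive on ordered pairs of distinct points, choose $\Phi_0$ with $\Phi_0(P_1) = P_2$ and $\Phi_0(N_1) = N_2$; let $\phi_0 \in \AGammaL(1,q)$ be its induced pencil map. Next, pick a collineation $\Phi_1$ in the stabilizer of $\{P_2, N_2\}$ whose pencil action equals $g^{-1} \circ \phi_0^{-1}$. An explicit matrix computation, in coordinates where $P_2 = (0,0,1)$ and $N_2 = (0,1,0)$, shows that this stabilizer surjects onto $\AGammaL(1,q)$ on the pencil through $P_2$ while retaining an independent scalar $\lambda$ acting along the line $\langle P_2, N_2 \rangle$; this extra freedom lets one simultaneously send $\Phi_0(Q_1)$ to the tangent point $Q_2$ of $\cO_2$ on $\langle P_2, N_2\rangle$, where $Q_1$ is the analogous tangent point of $\cO_1$ on $\langle P_1, N_1\rangle$. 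Setting $\Phi := \Phi_1 \Phi_0$, one verifies that $\Phi(\cO_1)$ has nucleus $N_2$, meets $\cO_2$ at $Q_2$ with common tangent $\langle P_2, Q_2 \rangle$, and that its secants through $P_2$ correspond via $g^{-1}$ to $g^{-1}(E_2) = $ the external lines of $\cO_2$ through $P_2$; hence compatibility holds.

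The main obstacle is the reverse implication, specifically the simultaneous realization of a prescribed pencil map and a prescribed image for $Q_1$. The resolution hinges on the observation that the induced action on the pencil through $P_2$ depends only on part of the matrix realizing an element of the stabilizer of $\{P_2, N_2\}$, leaving one free parameter that acts transitively on $\langle P_2, N_2 \rangle \setminus \{P_2, N_2\}$ and can therefore be tuned independently to match tangent points.
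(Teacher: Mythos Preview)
The paper does not prove this statement: Theorem~\ref{th_18} is quoted from \cite[Theorem 3.3]{okpr} without argument, so there is no in-paper proof to compare against. Your proposal is a self-contained proof of the cited result and is essentially correct.

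A few minor remarks. First, in the final clause of the reverse implication you write ``correspond via $g^{-1}$ to $g^{-1}(E_2)$''; what you mean is that the secant parameters of $\Phi(\cO_1)$ through $P_2$ are $\phi(S_1)=g^{-1}(S_1)=E_2$, i.e.\ the external parameters of $\cO_2$. Second, when you invoke ``the stabilizer of $\{P_2,N_2\}$'' you really use the \emph{pointwise} stabilizer of $(P_2,N_2)$; this is what your coordinate computation describes and what is needed so that $\Phi(N_1)=N_2$ and $\Phi(P_1)=P_2$ are preserved. Third, it is worth making explicit that the parameterisations of the two pencils are projective (identifying each pencil with $\PG(1,q)$), since this is what guarantees that the map induced by a collineation lies in $\PGammaL(2,q)$ rather than merely in the symmetric group; the paper's phrasing is informal on this point, but the original source \cite{okpr} uses projective coordinates. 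With these clarifications your argument goes through: the pointwise stabiliser of $(P_2,N_2)$ in $\PGammaL(3,q)$ surjects onto $\AGammaL(1,q)$ on the pencil through $P_2$ while leaving a free scalar acting transitively on $\langle P_2,N_2\rangle\setminus\{P_2,N_2\}$, which is exactly what is needed to match the tangent points and complete the compatibility check.
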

The set $S_{P_i}=S_i$ is known as the {\em local secant parameter set} associated with the pair $(\cO_i,P_i)$ \cite{okpr}; we refer to $E_{P_i}=E_i$  as the {\em local external parameter set} associated with the pair $(\cO_i,P_i)$. 

\begin{proposition}\label{prop_2}
The ovals in a generalized fan in $\PG(2,64)$  are all conics or all pointed conics.
\end{proposition}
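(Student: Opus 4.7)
The plan is to reduce the problem to a finite computational check, leveraging the classification of the $19$ projectively inequivalent ovals of $\PG(2,64)$ obtained from the four known hyperovals. Exactly two of these ovals arise from the regular hyperoval (the conic $\cD(x^2)$ and the pointed conic $\cD(x^{1/2})$); the remaining seventeen come from Subiaco I, Subiaco II, and Adelaide hyperovals. To prove the proposition it suffices to show that no oval in the latter group can appear in any generalized $f$-fan, and that conics are compatible only with conics while pointed conics are compatible only with pointed conics.

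The key tool is the matching criterion of Theorem~\ref{th_18}. In a generalized $f$-fan $\{\cO_s:s\in\GF(q)\}$, each oval is pairwise compatible with the other $q-1=63$ ovals at $63$ distinct points $P_{s,t}$ on the common tangent line at $(0,0,1)$. At each such point, compatibility requires, in particular, that the pairs $(\cO_s,P_{s,t})$ and $(\cO_t,P_{s,t})$ match, i.e., that the corresponding local external parameter sets be related by an element of $\AGammaL(1,64)$. Working modulo the stabilizer of $\cO$ in $\PGammaL(3,64)$, I would compute, for each of the $19$ representative ovals $\cO$ and each orbit-representative point $R$ lying on a tangent line to $\cO$ but outside $\cO\cup\{N\}$, the set $E_R\subseteq\GF(64)$, and then enumerate all pairs $(\cO',R')$ of other oval-point pairs for which $gE_{R'}=\GF(64)\setminus E_R$ for some $g\in\AGammaL(1,64)$.

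From this data I would extract, for each representative oval $\cO$, the list of ``candidate partner'' oval classes that $\cO$ can match at some point. The expected outcome is: (i) for each of the seventeen exotic ovals, the candidate partner list is either empty or too restricted to generate $63$ distinct compatibility points along a single tangent line, hence no generalized fan can contain such an oval; (ii) conics admit matches only with conics, and pointed conics only with pointed conics, so a fan containing one oval from either of these classes must consist entirely of ovals from the same class.

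The main obstacle is organising the computational case analysis so as to cover all matching configurations without redundancy: one must correctly take orbits of the full stabiliser of each oval on tangent-line points, and correctly enumerate $\AGammaL(1,64)$-equivalences of subsets of $\GF(64)$ of size $32$. The small orders of $\AGammaL(1,64)$ (namely $63\cdot 64\cdot 6=24192$) and of $\PGammaL(3,64)$-stabilisers of representative ovals keep the search tractable and consistent with the CPU budget mentioned for the broader classification.
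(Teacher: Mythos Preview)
Your computational reduction for part~(i)—eliminating the seventeen exotic ovals via the matching criterion of Theorem~\ref{th_18}—is essentially what the paper does, and would succeed.

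Part~(ii), however, contains a genuine gap: the expected outcome that ``conics admit matches only with conics, and pointed conics only with pointed conics'' is \emph{false}. For the conic $\cD(x^2)$ at a point $P_1=(0,1,c_1)$ on the line $x_0=0$, the local secant parameter set is the $\GF_2$-hyperplane $\{d:\Tr(d/c_1^2)=0\}$; for the pointed conic $\cD(x^{1/2})$ at $P_2=(0,1,c_2)$, the local external parameter set is the affine hyperplane $\{d:\Tr(dc_2)=1\}$. All $\GF_2$-hyperplanes of $\GF(64)$ and their nontrivial cosets lie in a single $\AGammaL(1,64)$-orbit (a translation followed by a scalar multiplication already suffices), so conic--pointed-conic pairs \emph{do} match in the sense of Theorem~\ref{th_18}. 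The matching computation you propose therefore cannot, by itself, rule out mixed fans.

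The paper closes this gap with a structural argument your plan omits. The computation additionally shows that for the pointed conic the \emph{only} tangent along which every non-nucleus point admits a matching partner is the axis $x_0=0$; since this line is simultaneously an axis for the conic and the pointed conic, any generalized fan built from these ovals is \emph{axial}. One then invokes Theorem~\ref{th_2} (an axial generalized fan of translation ovals arises from a normalized $\alpha$-flock for a single generator $\alpha$ of $\Aut(\GF(64))$) together with Theorem~\ref{th_3}, which forces every oval of the fan to have the form $\{(1,t,t^\alpha+g(s)):t\in\GF(q)\}\cup\{(0,0,1)\}$ for one fixed $\alpha\in\{2,1/2\}$. That is what yields ``all conics or all pointed conics''. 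To complete your proof you need this axiality/$\alpha$-flock step rather than a further matching computation.
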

\begin{proof}
By keeping in mind the above arguments and Theorem \ref{th_18}, we store just the pairs $(\cO,P)$ with $\cO$ being one  of the 19 ovals in $\PG(2,64)$ and $P$ varying among the representatives of the orbits of the stabilizer in $\PGammaL(3,64)$ of $\cO$ on the points not in $\cO$ and different from the nucleus. Such pairs are 45107 in number. For each such a pair $(\cO,P)$ we calculate the sets $S_P$ and $E_P$, and for both of them  we store the lexicographically least local parameter set in their respective orbit under $\AGammaL(1,64)$. Therefore,  the given pairs $(\cO_1,P_1)$ and $(\cO_2,P_2)$  match if and only if the lexicographically least local parameter set of $S_{1}$ is equal to the one of $E_2$.

Let $\cO$ be one of the 19 ovals in $\PG(2,64)$. To check if an oval, equivalent to $\cO$, could be in a generalized fan, we need to find a tangent of $\cO$ such that for each of its points $P$, not the nucleus and not in $\cO$, there is an oval-point pair matching with $(\cO,P)$.  For any representative $R$ from each point-orbit of the stabilizer in $\PGammaL(3,64)$ of $\cO$, considered  as a permutation group acting on it, let $t_R$ be the tangent to $\cO$ at $R$. For all pairs $(\cO,P)$, with $P\neq R$ on $t_R$ not the nucleus of $\cO$, we check if $(\cO,P)$ has a matching. Only the conic and the pointed conic survive this procedure. If $\cO$ is the conic, then $R$ can be chosen to be any point of $\cO$ since the stabilizer of $\cO$ is transitive on its points; so we may take $R=(1,0,0)$. If $\cO$ is the pointed conic, then the only point $R\in\cO$ that passes the above test is $(0,1,0)$. This implies that, a priori, a generalized fan consists just of conics and/or pointed conics; in any case, for such a fan the common tangent is the line $x_0=0$.
Furthermore, such a fan is anyway axial. In fact, it is known that each tangent to a conic is an axis, and the tangent $x_0=0$ is the unique axis of the pointed conic $\cD(1/2)$.  

Since both the conic and the pointed conic are translation ovals, from Theorem \ref{th_2}, every generalized fan in $\PG(2,64)$ arises from a normalized $\alpha$-flock with $\alpha\in\{2,\frac12\}$, $\alpha$  being a generator of $\Aut(\GF(64))$. This means that a generalized fan in $\PG(2,64)$ has the form of an oval set as in Theorem \ref{th_3}, with $\alpha\in\{2,\frac12\}$, that is it consists of either all conics or all pointed conics. 
\end{proof}
We first consider spreads of $T_2(\cD(f))$ containing the point $(0,0,0,1)$, with $\cD(f)$ an oval of $\PG(2,64)$. The following classification result will be often used in the sequel.
\begin{theorem}\cite{mps}\label{rem_6}
In $\PG(3,64)$ there are, up to equivalence,  exactly three  flocks of the quadratic cone:  the linear flock, the Subiaco flock, and the Adelaide flock.
\end{theorem}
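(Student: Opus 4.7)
The plan is to combine the correspondence between flocks and herds of ovals with the Vandendriessche classification of hyperovals in $\PG(2,64)$, and to carry out a directed computer search. By the construction of \cite{cppr}, every flock of the quadratic cone in $\PG(3,64)$ determines a $64$-clan $\cC$, hence a herd $\cH(\cC)=\cH(f_0,f_\infty)$ of $65$ ovals in $\PG(2,64)$; conversely, every such herd produces a $q$-clan and a flock. The Fundamental Theorem (Theorem \ref{th_13}) together with Remark \ref{rem_5} then reduces the classification of flocks up to projective equivalence to the classification of the associated herds up to isomorphism, while keeping track of the action of the automorphism group of the flock GQ on the lines through $(\infty)$ in the sense of Remark \ref{rem_3}.

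Next, I would exploit the fact that each oval in a herd extends uniquely to a hyperoval in $\PG(2,64)$. By \cite{vander}, there are exactly four such hyperovals (regular, Subiaco I, Subiaco II, Adelaide), and the magic action of $\PGammaL(2,64)$ on o-permutations (Theorem \ref{th_14}) provides $19$ representatives of ovals, one per orbit of the stabilizer of a hyperoval on its points. Fixing a representative $f_0$ from each of these $19$ orbits, I would enumerate those o-permutations $f_\infty$ for which the entire family
\[
f_s(t)=\frac{f_0(t)+\kappa s f_\infty(t)+s^{1/2}t^{1/2}}{1+\kappa s+s^{1/2}}, \qquad s\in\GF(64),
\]
with $\kappa\in\GF(64)$ and $\Tr(\kappa)=1$, consists entirely of o-permutations. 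A MAGMA search, after identifying isomorphic herds and tracking the $(\infty)$-orbit, will then yield exactly three equivalence classes of flocks: the linear, the Subiaco, and the Adelaide.

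The hard part will be controlling the size of the enumeration of $f_\infty$ for each fixed $f_0$. This is kept tractable by normalizing $f_0$ via the magic action to one of only $19$ representatives, by the observation that each $f_s$ must also extend to one of the four known hyperovals (severely restricting the list of candidates), and by pruning with compatibility-type conditions analogous to those underlying Theorem \ref{th_18}. Pairwise inequivalence of the three resulting flocks is standard, detectable from the automorphism groups of the associated flock quadrangles, so once the search terminates, the theorem follows.
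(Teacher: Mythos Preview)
The paper does not prove this statement at all: Theorem~\ref{rem_6} is quoted verbatim from \cite{mps} and is used as an input to the classification carried out in Section~\ref{sec_3}. There is no proof environment following it, and no argument for it is given anywhere in the text. So there is nothing in the paper to compare your proposal against.

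What you have written is a plausible high-level outline of the strategy actually used in \cite{mps} (herds via \cite{cppr}, reduction to the 19 oval representatives coming from the Vandendriessche classification, then a directed computer search), and your invocation of Theorem~\ref{th_13}, Remark~\ref{rem_5} and Theorem~\ref{th_14} is appropriate for setting up the equivalence between flocks and herds. But as a proof it is only a sketch: the sentence ``A MAGMA search \ldots\ will then yield exactly three equivalence classes'' is doing all the work, and the claimed control on the enumeration of $f_\infty$ is asserted rather than demonstrated. If you want a self-contained argument you would need to supply (or at least describe in detail) that computation; otherwise the correct thing to do here is exactly what the paper does, namely cite \cite{mps}.
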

 
Corresponding to these flocks there are three $q$-clans, $q=64$, that here we denote as in \cite{mps}: the {\em classical} $q$-clan $\cC_L$, the {\em Subiaco}  $q$-clan $\cC_S$ and the {\em Adelaide} $q$-clan $\cC_A$. 

\begin{remark}\label{rem_11}
\emph{As introduced in the previous section, we will denote by $\cH(\cC_\varepsilon)$, with $\varepsilon\in\{L,S,A\}$, the herd defined by the 64-clan $\cC_\varepsilon$.
We emphasize that $\cH(\cC_L)$ consists of $q+1$ copies of the conic $\cO(x^{1/2})$. The herd  $\cH(\cC_S)$ contains two subsets of projectively equivalent ovals: they correspond to the two ovals with  nucleus $(0,0,1)$ respectively contained in the Subiaco I and in the Subiaco II hyperovals. 
This implies that every oval in $\cH(\cC_S)$ is projectively equivalent to either  $\cO(f_{S_I})$ or to $\cO(f_{S_{II}})$. As a consequence, we have that the Subiaco flock is equivalently defined by o-polynomials representing non-equivalent ovals in $\PG(2,64)$. Finally, all ovals in $\cH(\cC_A)$ are projectively equivalent to the oval  $\cO(f_A)$, with  nucleus $(0,0,1)$,  contained in the Adelaide hyperoval.}
\end{remark}

In the following, for the sake of simplicity, $h(\cC)$ and $h^{-1}(\cC)$  will be used  to denote the set of o-polynomials  over $\GF(64)$ defining the ovals in the herd $\cH(\cC)$ and the set of their inverses, respectively. 
 Note that, for every o-polynomial $f$, $f^{-1}$ is also an o-polynomial: $\cD(f^{-1})$ is projectively equivalent to the oval $(\cD(f)\setminus\{(0,0,1)\})\cup\{(0,1,0)\}$.  

From now on, we assume $q=64$.
\begin{theorem}\label{th_6}
If $T_2(\cD(f))$ has a spread containing $(0,0,0,1)$ which  arises from a generalized $f$-fan of pointed conics, then $f\in h(\cC_L) \cup h(\cC_S)\cup h(\cC_A)$, up to equivalence. 
\end{theorem}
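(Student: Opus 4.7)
The plan is to reduce the existence of such a spread to a flock of the quadratic cone in $\PG(3,64)$ and then invoke the classification in Theorem \ref{rem_6}. By Theorem \ref{th_1}, a spread of $T_2(\cD(f))$ containing $(0,0,0,1)$ corresponds to a generalized $f$-fan $\{\cO_s:s\in\GF(64)\}$ in the plane $\pi:x_1=0$; by hypothesis every $\cO_s$ is a pointed conic. Proposition \ref{prop_2} then applies: the fan is axial and arises from a normalized $\alpha$-flock with $\alpha\in\{2,1/2\}$, and since only $\cD(1/2)$ (and not $\cD(2)$) is a pointed conic, the relevant automorphism is $\alpha=1/2$. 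In particular the fan has the explicit form $\cO_s=\{(1,t,t^{1/2}+g(s)):t\in\GF(64)\}\cup\{(0,0,1)\}$ of Theorem \ref{th_3}.

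Applying Theorem \ref{th_3} with $\alpha=1/2$ produces a $2$-flock, that is, a flock of the quadratic cone:
\[
\cF=\{f(t)x_0+t^{1/2}x_1+g(t)x_2+x_3=0:t\in\GF(64)\}.
\]
By Theorem \ref{rem_6}, $\cF$ is projectively equivalent to one of $\cF(\cC_L)$, $\cF(\cC_S)$, $\cF(\cC_A)$, so the Fundamental Theorem of $q$-clan geometry (Theorem \ref{th_13}) forces the associated $q$-clan
\[
\cC=\left\{A_t=\begin{pmatrix}f(t)&t^{1/2}\\ 0&g(t)\end{pmatrix}:t\in\GF(64)\right\}
\]
to be equivalent to $\cC_L$, $\cC_S$, or $\cC_A$.

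The remaining step is to descend from this equivalence of whole $q$-clans to the claim about the single o-polynomial $f$. The herd-to-$q$-clan correspondence in Section \ref{sec_2} identifies $f$ with the component $f_0$ of $\cH(\cC)$, so $f\in h(\cC)$. The enriched form of the Fundamental Theorem stated just before Remark \ref{rem_5}, combined with Theorem \ref{th_14}, asserts that equivalent $q$-clans yield isomorphic herds, so each o-polynomial of one is related under the magic action (up to a scalar) to one of the other. Consequently $f$ is equivalent to an element of $h(\cC_L)\cup h(\cC_S)\cup h(\cC_A)$. The only genuinely delicate point is this final transfer from a clan-level equivalence to an equivalence of the individual o-polynomial $f$; everything preceding it is a direct application of the structure theorems assembled in Section \ref{sec_2}.
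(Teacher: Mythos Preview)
Your proof is correct and follows essentially the same route as the paper: reduce to a flock of the quadratic cone via Theorem \ref{th_3} with $\alpha=1/2$, invoke the classification in Theorem \ref{rem_6}, and then pass from flock equivalence to herd isomorphism to conclude that $f$ is equivalent to an o-polynomial in one of the three herds. The only difference is in the final step: the paper cites \cite[Theorems 2.1 and 2.6]{mps} directly for the fact that, at $q=64$, projective equivalence of flocks implies isomorphism of herds, whereas you derive the same implication from the Fundamental Theorem (Theorem \ref{th_13}) together with its enriched form preceding Remark \ref{rem_5}; your argument actually shows this implication holds for all even $q$, not just $64$.
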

\begin{proof}
Assume $\Sigma$  is a spread of $T_2(\cD(f))$ containing $(0,0,0,1)$, arising from the  generalized $f$-fan of pointed conics $\cO_s=\{(1,0,t,t^{1/2}+g(s)):t\in\GF(q)\}\cup\{(0,0,0,1)\}$, for some permutation $g$ of $\GF(q)$. Then $\alpha=1/2$, so, by Theorem \ref{th_3}, the corresponding $1/\alpha$-flock is actually a flock $\cF(\cC)$ of the quadratic cone in $\PG(3,64)$, 
where $\cC$ is the $q$-clan defined by $f$ and $g$.

By taking into account Theorem \ref{rem_6}, the flock $\cF(\cC)$ is projectively  equivalent to one of the above three flocks. Since for $q=64$ two flocks $\cF(\cC)$ and $\cF(\cC')$ are projectively equivalent if and only if the herds $\cH(\cC)$ and $\cH(\cC')$ are isomorphic \cite[Theorems 2.1 and  2.6]{mps}, we see that, up to equivalence,  $f\in h(\cC_L) \cup h(\cC_S)\cup h(\cC_A)$. 
\end{proof}
\begin{remark}\label{rem_9}
{\em We stress that, for $f\in h(\cC)$  the oval in the corresponding herd is not $\cD(f)$ but it is $\cO(f)$, equivalent to $\cD(f^{-1})$ by   Remark \ref{rem_4}.
}
\end{remark}
\begin{theorem}\label{th_17}
The Tits quadrangle $T_2(\cD(f))$, with $f\in h(\cC_L)\cup h(\cC_S)\cup h(\cC_A)$, has a unique spread containing $(0,0,0,1)$,  arising from a generalized $f$-fan of pointed conics. 
\end{theorem}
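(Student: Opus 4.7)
The strategy is to combine an existence construction based on the herd--$q$-clan correspondence with a uniqueness argument that isolates the function $g$ from $f$ via the magic action.

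For existence, let $f\in h(\cC_\varepsilon)$ with $\varepsilon\in\{L,S,A\}$. Choose a normalized $q$-clan $\cC$ equivalent to $\cC_\varepsilon$ whose matrices $A_t$ have $(1,1)$-entry $f(t)$, and let $g(t)$ denote the resulting $(2,2)$-entry. By Theorem \ref{th_3} applied with $\alpha=1/2$, the set $\{\cO_s:s\in\GF(q)\}$ with
\[
\cO_s=\{(1,t,t^{1/2}+g(s)):t\in\GF(q)\}\cup\{(0,0,1)\}
\]
is a generalized $f$-fan of pointed conics; Theorem \ref{th_1} then assembles it into a spread $\Sigma$ of $T_2(\cD(f))$ containing $(0,0,0,1)$.

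For uniqueness, suppose $\Sigma_1$ and $\Sigma_2$ are two such spreads, arising from generalized $f$-fans of pointed conics $\cF_1$ and $\cF_2$. By Theorem \ref{th_3}, each $\cF_i$ yields a normalized $q$-clan $\cC_i$ whose $(1,1)$-entries are $f(t)$ and whose $(2,2)$-entries we denote $g_i(t)$. Theorem \ref{th_6} asserts that each $\cC_i$ is equivalent to $\cC_\varepsilon$, and the Fundamental Theorem \ref{th_13} together with the herd--isomorphism criterion stated immediately after it imply that the herds $\cH(\cC_1)$ and $\cH(\cC_2)$ are isomorphic via a magic--action element $\psi\in\PGammaL(2,q)$. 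Since $\cO(f)$ occurs as the $f_0$-oval of both herds, $\psi$ can be arranged to stabilize it.

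The decisive step is then to show that this stabilizer pins down $f_\infty$, and hence $g=\kappa f_\infty$, giving $g_1=g_2$. For $q=64$ we carry this out by computer: for each of the three herd families and each o-polynomial $f$ playing the role of $f_0$, we compute in MAGMA the subgroup of $\PGammaL(2,64)$ fixing $f$ under the magic action and verify that it acts trivially on the paired coordinate $f_\infty$. The classical case $\cC_L$ is immediate, as $\cH(\cC_L)$ consists of $q+1$ copies of the same conic. The main obstacle, and the computational heart of the argument, is carrying out this rigidity verification for the Subiaco and Adelaide herds using the explicit forms of $f_{S_I},f_{S_{II}},f_A$.
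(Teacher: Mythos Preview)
Your existence argument is essentially the paper's. For uniqueness, however, you have missed the paper's direct and elementary observation, and instead embarked on a much heavier and incomplete route.

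The paper's entire uniqueness argument is one sentence. Any generalized $f$-fan of pointed conics has the form $\cO_s=\{(1,t,t^{1/2}+g(s)):t\in\GF(q)\}\cup\{(0,0,1)\}$, where $g$ is the $(2,2)$-entry of the associated normalized $q$-clan. Since $g$ is necessarily a permutation of $\GF(q)$ (apply the anisotropy condition to $u=(0,1)$), the family $\{\cO_s:s\in\GF(q)\}$ is always the fixed collection $\bigl\{\{(1,t,t^{1/2}+c):t\in\GF(q)\}\cup\{(0,0,1)\}:c\in\GF(q)\bigr\}$, regardless of which $g$ arises. No Fundamental Theorem, no herd isomorphism, no magic-action stabilizer, and no MAGMA computation is invoked.

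Your route through herd isomorphisms also has genuine gaps. The assertion that the herd isomorphism $\psi$ ``can be arranged to stabilize'' $\cO(f)$ in its $f_0$-position is not justified: a herd isomorphism permutes the $q+1$ constituent ovals, and you have not shown that one fixing the $f_0$-slot exists. More seriously, your target conclusion $g_1=g_2$ is too strong to be true. Already in the classical case $f(t)=t^{1/2}$ one has $f_\infty(t)=t^{1/2}$ and $g(t)=\kappa\,t^{1/2}$ for \emph{any} $\kappa\in\GF(q)$ with $\Tr(\kappa)=1$; distinct choices of $\kappa$ produce distinct functions $g$ all defining the linear flock. Thus even a successful MAGMA verification that the magic-action stabilizer of $f$ fixes $f_\infty$ would not yield $g_1=g_2$, and your argument does not close. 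The paper sidesteps all of this by working at the level of the set $\{g(s):s\in\GF(q)\}=\GF(q)$ rather than the function $g$.
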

\begin{proof}
Since $f$, together with a suitable o-permutation $g$, defines a flock of the quadratic cone in  $\PG(3,q)$, $T_2(\cD(f))$ has spreads containing $(0,0,0,1)$ arising from a generalized $f$-fan of pointed  conics $\cO_s=\{(1,0,t,t^{1/2}+g(s)):t\in\GF(q)\}\cup\{(0,0,0,1)\}$, according to Theorems \ref{th_3} and \ref{th_1}.
We note that, if $g'$ is another polynomial defining, together with $f$, an  equivalent flock, then the generalized $f$-fan of pointed  conics defined  by $g'$ is precisely $\{\cO_s:s \in \GF(q)\}$, because $g'$ is actually a permutation of $\GF(q)$, as is $g$. \end{proof}
%
%
%
\begin{remark}\label{rem_8}
{\em Note that the set $h(\cC_L)$ consists just of the o-polynomial $x^{1/2}$, which is fixed by the magic action. Furthermore, the o-polynomials in $h(\cC_S)$ are equivalent either to $f_{S_I}$ or to $f_{S_{II}}$, and the o-polynomials in $h(\cC_A)$ are all equivalent  to $f_{S_A}$.
}
\end{remark}
In view of the next results, for the sake of completeness, we list the number  of the point-orbits, together with their respective length, for the stabilizer $G_{f}$ in $\PGammaL(3,q)$ of each oval in the herds $\cH(\cC_S)$ and $\cH(\cC_A)$. By Remark \ref{rem_9}, such an oval is described by  $\cD(f)$, with $f\in\{f_{S_I}^{-1}, f_{S_{II}}^{-1}, f_{A}^{-1}\}$. In the case $f(x)=x^2$, i.e., $\cD(f)$ is a conic, it is well known that $G_{f}$ is transitive on the points of the conic. For the remaining stabilizers, we have:
\begin{itemize}\label{orbits}
\item[-] $G_{f_{S_{I}}^{-1}}$ has one orbit of length 5 and one of length 60, the latter containing $(0,0,1)$;
\item[-] $G_{f_{S_{II}}^{-1}}$ has one orbit of length 5  and four orbits of length 15, one of the latter containing $(0,0,1)$;
\item[-] $G_{f_{A}^{-1}}$ has one orbit of length 1, one orbit of length 4, which contains  $(0,0,1)$, and five orbits of length 12.
\end{itemize} 
Note that $G_f$, with $f\in\{f_{S_I}^{-1}, f_{S_{II}}^{-1}, f_{A}^{-1}\}$, is precisely  the stabilizer of the hyperoval $\cD(f)\cup\{(0,1,0)\}$, with the point $(0,1,0)$ being a singleton orbit of $G_f$.
Let $\{P_0,P_1,\ldots, P_r\}$, $P_0=(0,0,1)$, be a transversal of the orbits of $G_f$ on the points of $\cD(f)$.
The following computational result is the keystone of our classification: 
\begin{quote}
{\bf Result 1.\ }{\em For every $f\in  h^{-1}(\cC_\varepsilon)$, $\varepsilon\in\{S,A\}$ and every $P\in\{P_1,\ldots, P_r\}$ there exists $\psi\in\PGammaL(2,q)$ such that $\psi f\in \<g\>$, with $g\in h^{-1}(\cC_\varepsilon)$,  and $\overline\psi$ mapping $P$ to a point of $\cD(g)$ in the orbit of $(0,0,1)$ under $G_g$.}
\end{quote}

%
\begin{theorem}\label{th_7}
If $T_2(\cD(f))$ has a spread containing $(0,0,0,1)$ which  arises from a generalized $f$-fan of conics, then $f\in h^{-1}(\cC_L)\cup h^{-1}(\cC_S)\cup h^{-1}(\cC_A)$, up to equivalence. 
 \end{theorem}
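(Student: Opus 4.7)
The plan is to mirror the proof of Theorem \ref{th_6}, replacing its use of Theorem \ref{th_3} at $\alpha=\tfrac{1}{2}$ by the combination of Theorem \ref{th_3} at $\alpha=2$ with Proposition \ref{prop_1}.

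Since the fan consists of conics, the proof of Proposition \ref{prop_2} lets me put it into the normal form of Theorem \ref{th_3} with $\alpha=2$,
\[
\cO_s=\{(1,t,t^{2}+g(s)):t\in\GF(q)\}\cup\{(0,0,1)\},
\]
for some permutation $g$ of $\GF(q)$. Theorem \ref{th_3} then says that
\[
\{f(t)x_0+t^{2}x_1+g(t)x_2+x_3=0:t\in\GF(q)\}
\]
is a $1/2$-flock, and Proposition \ref{prop_1} converts this into a genuine flock of the quadratic cone
\[
\{t^{2}x_0+f(t)x_1+g(t)x_2+x_3=0:t\in\GF(q)\}.
\]

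I would then normalize this $2$-flock into $q$-clan form via the substitution $u=f(t)^{2}$, so that $t=f^{-1}(u^{1/2})$. Writing $\sigma$ for the Frobenius $x\mapsto x^{2}$ of $\GF(q)$, the reparametrized flock has $x_1$-coefficient $u^{1/2}$ and $x_0$-coefficient $(f^{-1})^{\sigma}(u)$, hence encodes a normalized $q$-clan $\cC$. By Theorem \ref{rem_6}, $\cC$ is equivalent to one of $\cC_L,\cC_S,\cC_A$, and, exactly as in Theorem \ref{th_6}, the corresponding herd $\cH(\cC)$ is isomorphic to $\cH(\cC_\varepsilon)$ for the matching $\varepsilon\in\{L,S,A\}$. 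In particular, the o-permutation $(f^{-1})^{\sigma}$ sitting in the upper-left of $\cC$ lies in $h(\cC_\varepsilon)$ up to magic-action equivalence.

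To finish, I would absorb the Frobenius twist by observing that $(I,\sigma^{-1})\in\PGammaL(2,q)$ acts under the magic action by $h\mapsto h^{\sigma^{-1}}$; combined with the previous step this shows that $f^{-1}$ is magic-action equivalent to a member of $h(\cC_\varepsilon)$. Theorem \ref{th_14} then yields that $\cO(f^{-1})$ is projectively equivalent to some herd oval $\cO(h)$ with $h\in h(\cC_\varepsilon)$, and translating back to the $\cD$-representation via Remark \ref{rem_4} gives that $\cD(f)$ is projectively equivalent to $\cD(h^{-1})$ with $h^{-1}\in h^{-1}(\cC_\varepsilon)$. Hence, up to equivalence, $f\in h^{-1}(\cC_L)\cup h^{-1}(\cC_S)\cup h^{-1}(\cC_A)$.

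The main obstacle is the bookkeeping across three changes of representation: the transposition of Proposition \ref{prop_1} swaps the roles of the upper-left and middle entries of the clan; the normalization $u=f(t)^{2}$ introduces a Frobenius twist on the upper-left entry; and the passage between the $\cO$- and $\cD$-forms inverts the o-polynomial. The Frobenius twist is painlessly absorbed by the magic action, and no new ideas are required beyond those already developed in Section \ref{sec_2}; still, composing these three steps in the correct order is what forces the target set to be $h^{-1}(\cC_\varepsilon)$ rather than $h(\cC_\varepsilon)$.
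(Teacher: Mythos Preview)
Your argument follows the paper's approach exactly up through the point where you obtain that $\widehat f=(f^{-1})^{\sigma}$, and hence $f^{-1}$, is magic-action equivalent to some $h\in h(\cC_\varepsilon)$. The gap is in your final step. From ``$f^{-1}$ is magic-action equivalent to $h$'' you pass to ``$\cD(f)\simeq\cD(h^{-1})$'' (correct, via $\tau$) and then to ``$f\in h^{-1}(\cC_\varepsilon)$ up to equivalence''. But the paper's notion of equivalence of o-permutations is the magic action, and $\cD(f)\simeq\cD(h^{-1})$ is \emph{not} the same as $f$ being magic-equivalent to $h^{-1}$: under $\tau$ one has $\cD(f)\simeq\cD(h^{-1})$ iff $\cO(f^{-1})\simeq\cO(h)$ iff $f^{-1}$ is magic-equivalent to $h$, which is just what you started with. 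The magic action does not commute with inversion $f\mapsto f^{-1}$; geometrically, if $\overline\psi$ carries $\cO(f^{-1})$ to $\cO(h)$ then $\overline\psi$ fixes the nucleus $(0,0,1)$ but need not fix $(0,1,0)$, so $\overline\psi(\cO(f))=(\cO(h)\setminus\{\overline\psi(0,1,0)\})\cup\{(0,0,1)\}$ is in general not of the form $\cO(k)$ for any $k\in h^{-1}(\cC_\varepsilon)$.

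This is precisely the obstruction the paper handles with the computational \textbf{Result~1}: for each $\tilde f\in h(\cC_\varepsilon)$ and each point $P=\overline\psi^{-1}(0,1,0)$ of $\cO(\tilde f)$, there is some $g\in h(\cC_\varepsilon)$ magic-equivalent to $\tilde f$ under a collineation sending $P$ into the orbit of $(0,1,0)$ on $\cO(g)$. Only then does swapping the nucleus with that point produce an oval equivalent to $\cO(g^{-1})$, giving $\widehat f^{-1}$ (hence $f$) magic-equivalent to $g^{-1}\in h^{-1}(\cC_\varepsilon)$. Your ``bookkeeping'' remark correctly flags the inversion as delicate, but the resolution is not formal: it requires the orbit information encoded in Result~1, which is exactly the missing ingredient in your proposal.
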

 \begin{proof}
 Let $\Sigma$  be a spread of $T_2(\cD(f))$ containing $(0,0,0,1)$, arising from the  generalized $f$-fan of conics $\cO_s=\{(1,0,t,t^{2}+g(s)):t\in\GF(q)\}\cup\{(0,0,0,1)\}$, for some o-permutation $g$ of $\GF(q)$. Then $\alpha=2$, so, by Theorem \ref{th_3}, the planes of the corresponding $1/2$-flock are $f(t)x_0+t^{2}x_1+g(t)x_2+x_3=0$, for  all $t\in\GF(q)$.
 
 
From Proposition \ref{prop_1}, such a $1/2$-flock corresponds to the flock  $\{t^{2}x_0+f(t)x_1+  g(t)x_2+x_3=0:t \in\GF(q)\}$ of the quadratic cone in $\PG(3,q)$. 
By following \cite{cppr}, the $q$-clan associated with this flock can be normalized to the form
\[
\cC=\left\{\begin{pmatrix}
\widehat f(t) & t^{1/2}\\
0 & \widehat g(t)
\end{pmatrix}:t \in\GF(q)\right\},
\]
where $\widehat f=\rho^{-1}\circ f^{-1} \circ \rho$ and $\widehat g=g\circ f^{-1} \circ \rho$, with $\rho$  the automorphism $x \mapsto x^{1/2}$ of $\GF(q)$. Hence, 
\[
f=\rho\circ{\widehat f}^{-1} \circ \rho^{-1}, \hspace{.3in} g=\widehat g\circ{\widehat f}^{-1} \circ \rho^{-1}.
\]
By using the same arguments as in the proof of Theorem \ref{th_6}, $\widehat f\in h(\cC_L)\cup h(\cC_S)\cup h(\cC_A)$,  up to equivalence.

For every o-polynomial $h$, we have $(\rho\circ h\circ \rho^{-1})(x)=(h(x^2))^{1/2}=h^{1/2}(x)$. So the polynomial $h^{1/2}$ is the image of $h$ under the semilinear transformation $\psi=(I_2,\rho)\in\PGammaL(2,q)$ acting on the  set of all o-polynomials as defined in \cite{okp2}.
Thus, for $h=\widehat f^{-1}$, we obtain that $f$ and $\widehat f^{-1}$ correspond under the magic action via $\psi$, that is, $f=(\widehat f^{-1})^{1/2}$. We now prove that, if $\widehat f\in h(\cC_\varepsilon)$, up to equivalence, with $\varepsilon\in\{L,S,A\}$, then $\widehat f^{-1}$ is equivalent to a polynomial in  $h^{-1}(\cC_\varepsilon)$, giving $f\in h^{-1}(\cC_\varepsilon)$, up to equivalence.

If $\varepsilon=L$, then $\widehat f(x)=x^{1/2}$ because of the Remark \ref{rem_8}. Clearly, $\widehat f^{-1}\in h^{-1}(\cC_L)=\{x^2\}$.

Let $\varepsilon\in\{S,A\}$ and assume $\widehat f=\psi \tilde f$, for some $\tilde f\in h(\cC_\varepsilon)$.  Then, $\widehat f^{-1}=(\psi \tilde f)^{-1}$. Note that $(\psi \tilde f)^{-1}$ is an o-polynomial defining the oval $\cO((\psi\tilde f)^{-1})$, which is obtained from $\cO(\psi\tilde f)$ by swapping the nucleus $(0,0,1)$ with the point $(0,1,0)\in \cO(\psi\tilde  f)$. By Theorem \ref{th_14},  $\cO(\psi\tilde f)=\overline\psi\cO(\tilde f)$, for some collineation $\overline\psi$.
Let $O_P$ be the orbit of $P=\overline\psi^{-1}(0,1,0)\in\cO(\tilde f)$ under the stabilizer of $\cO(\tilde f)$ in $\PGammaL(3,q)$. By Result 1, there exists an o-polynomial $g\in h(\cC_\varepsilon)$ which is equivalent to $\varphi \tilde f$ via some element $\varphi\in\PGammaL(2,q)$, that is $\varphi \tilde f\in\<g\>$. 
Therefore, $\overline\varphi\cO(\tilde f)=\cO(g)$ and $\overline\varphi O_P$ is the orbit of $(0,1,0)$ under the  stabilizer of $\cO(g)$ in $\PGammaL(3,q)$. By swapping the nucleus $(0,0,1)$ of  $\cO(g)$ with $(0,1,0)$, we get the oval $\cO(g^{-1})$, with $g^{-1}\in h^{-1}(\cC_\varepsilon)$. By comparing the latter with the previous swapping, we can deduce that $\cO((\psi \tilde f)^{-1})$ and $\cO(g^{-1})$ are projectively equivalent, from which $(\psi \tilde f)^{-1}$ and $g^{-1}$ are equivalent by \cite[Theorem 6]{okp2}. This implies that $\widehat f^{-1}=(\psi \tilde f)^{-1}\in h^{-1}(\cC_\varepsilon)$, up to equivalence.
\end{proof}

\begin{theorem}\label{th_16}
The Tits quadrangle $T_2(\cD(f))$, with $f\in h^{-1}(\cC_L)\cup h^{-1}(\cC_S)\cup h^{-1}(\cC_A)$, has a spread containing $(0,0,0,1)$,  arising from a generalized $f$-fan of  conics. 
\end{theorem}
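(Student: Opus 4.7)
The plan is to reverse the normalization that underlies the proof of Theorem \ref{th_7}. Given $f \in h^{-1}(\cC_\varepsilon)$ with $\varepsilon \in \{L,S,A\}$, I would write $f = \tilde f^{-1}$ for some $\tilde f \in h(\cC_\varepsilon)$. Since $\tilde f$ is an o-polynomial occurring as the $(1,1)$-entry of the normalized $q$-clan $\cC_\varepsilon$, there exists $\tilde g : \GF(q)\to\GF(q)$ such that
$$A_t \;=\; \begin{pmatrix} \tilde f(t) & t^{1/2}\\ 0 & \tilde g(t)\end{pmatrix},\qquad t\in\GF(q),$$
forms the $q$-clan $\cC_\varepsilon$.

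The first step is to build an auxiliary $q$-clan matching the normalization used in Theorem \ref{th_7}. Writing $\rho:x\mapsto x^{1/2}$, I would apply $\rho^{-1}$ entrywise to each $A_t$ and reparametrize by $s = t^2$, obtaining
$$\widehat A_s \;:=\; A_{s^{1/2}}^{\rho^{-1}} \;=\; \begin{pmatrix} (\tilde f(s^{1/2}))^2 & s^{1/2}\\ 0 & (\tilde g(s^{1/2}))^2\end{pmatrix}.$$
Since $\widehat A_0 = 0$ and the anisotropy of the nonzero differences is preserved both by any $\sigma\in\Aut(\GF(q))$ acting entrywise and by bijective reparametrizations of the index set, $\{\widehat A_s : s\in\GF(q)\}$ is again a $q$-clan; explicitly, it is $q$-clan equivalent to $\cC_\varepsilon$ in the sense of Section \ref{sec_2}, via $\lambda=1$, $B=I_2$, $\sigma=\rho^{-1}$, $\pi(t)=t^2$. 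Setting $\widehat f(s) := (\tilde f(s^{1/2}))^2$ and $\widehat g(s) := (\tilde g(s^{1/2}))^2$, a short direct check gives $\widehat f = \rho^{-1}\circ f^{-1}\circ\rho$, matching the normalization employed in the proof of Theorem \ref{th_7}.

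I would then define $g$ by enforcing the companion identity $\widehat g = g\circ f^{-1}\circ\rho$, which yields the explicit formula $g(v) := \widehat g(f(v)^2) = (\tilde g(\tilde f^{-1}(v)))^2$. Because $\{\widehat A_s:s\in\GF(q)\}$ is a $q$-clan, the set of planes $\{\widehat f(s)x_0 + s^{1/2}x_1 + \widehat g(s)x_2 + x_3 = 0 : s\in\GF(q)\}$ is a flock of the quadratic cone in $\PG(3,q)$; the substitution $s = f(t)^2$, which is a bijection of $\GF(q)$, rewrites this family as $\{t^2 x_0 + f(t)x_1 + g(t)x_2 + x_3 = 0 : t\in\GF(q)\}$.

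To finish, Proposition \ref{prop_1} converts this quadratic-cone flock into the $1/2$-flock $\{f(t)x_0 + t^2 x_1 + g(t)x_2 + x_3 = 0:t\in\GF(q)\}$; Theorem \ref{th_3} with $\alpha=2$ turns that $1/2$-flock into the generalized $f$-fan of conics $\cO_s = \{(1,0,t,t^2+g(s)):t\in\GF(q)\}\cup\{(0,0,0,1)\}$, $s\in\GF(q)$; and Theorem \ref{th_1} produces the required spread of $T_2(\cD(f))$ containing $(0,0,0,1)$. The only non-bookkeeping content, and hence the main (quite mild) obstacle, is checking that the Frobenius twist combined with the square-root reparametrization preserves the $q$-clan property; everything else is algebraic rewriting following the dictionary already established in Proposition \ref{prop_1} and Theorems \ref{th_3} and \ref{th_1}.
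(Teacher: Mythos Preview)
Your proof is correct. Both your argument and the paper's follow the same backbone: start from the $q$-clan $\cC_\varepsilon$ whose herd contains $f^{-1}$, pass through Proposition~\ref{prop_1} to obtain a $1/2$-flock, invoke Theorem~\ref{th_3} with $\alpha=2$ to get a generalized fan of conics, and finish with Theorem~\ref{th_1}. The difference lies in where the Frobenius adjustment is performed. The paper first produces a normalized $1/2$-flock with leading function $\tilde f=\widehat f^{-1/2}=f^{1/2}$, obtains a spread of $T_2(\cD(\tilde f))$, and then transports it to $T_2(\cD(f))$ via the collineation $\overline\psi=(I_3,\rho)$, verifying posy equivalence explicitly (compatibility points and all). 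You instead apply the Frobenius twist at the $q$-clan level and re-index by $s=f(t)^2$, so that the $1/2$-flock you obtain already has leading function $f$; this delivers a generalized $f$-fan directly and bypasses the posy-equivalence step and Theorem~\ref{th_5} entirely. Your route is a bit more economical; the paper's route has the advantage of illustrating how the posy machinery introduced in Section~\ref{sec_2} is meant to be used. One small point worth making explicit in your write-up: the assertion that any $\tilde f\in h(\cC_\varepsilon)$ occurs as the $(1,1)$-entry of some normalized $q$-clan (equivalent to $\cC_\varepsilon$) is exactly what the paper also uses implicitly in the proof of Theorem~\ref{th_17}, so you are not assuming anything beyond what the paper does.
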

\begin{proof}
Let $f=\widehat f^{-1}$ with $\widehat f\in h(\cC_L)\cup h(\cC_S)\cup h(\cC_A)$. 
Then, $\{\widehat f(t)x_0+t^{1/2}x_1+\widehat g(t)x_2+x_3=0: t\in\GF(q)\}$ is a flock of the quadratic cone in $\PG(3,q)$, for some o-permutation $\widehat g$. From Proposition \ref{prop_1}, such a flock corresponds to the $1/2$-flock  $\{t^{1/2}x_0+\widehat f(t)x_1+\widehat  g(t)x_2+x_3=0:t \in\GF(q)\}$ of the cone $\Gamma_{1/2}$ in $\PG(3,q)$. By following \cite{che}, the $1/2$-clan associated with this $1/2$-flock can be normalized to the form
\[
\cC=\left\{\begin{pmatrix}
\tilde f(t) & t^2\\
0 & \tilde g(t)
\end{pmatrix}:t \in\GF(q)\right\},
\]
where $\tilde f=\widehat f^{-1/2}$ and $\tilde g=\widehat g\circ\widehat f^{-1} \circ \rho^{-1}$, with $\rho$  the automorphism $x \mapsto x^{1/2}$ of $\GF(q)$. The planes of the normalized $1/2$-flock defined by $\cC$ are $\tilde f(t)x_0+t^2x_1+\tilde  g(t)x_2+x_3=0$, for all $t \in\GF(q)$. By Theorems \ref{th_3} and \ref{th_1}, $T_2(\cD(\tilde f))$ has spreads containing $(0,0,0,1)$, arising from the generalized  $\tilde f$-fan of conics $\cO_s=\{(1,0,t,t^{2}+\tilde g(s)):t\in\GF(q)\}\cup\{(0,0,0,1)\}$.  By Theorem \ref{th_5}, we need to show that the posy $\cD(\tilde f)\cup \{\cO_s:s \in \GF(q)\}$ is equivalent to a posy consisting of the director oval $\cD(f)$ and a generalized $f$-fan of conics.

Let $\tau$ be the collineation of $\PG(2,q)$ defined in Remark \ref{rem_4}. Then,

\begin{align*}
\tau\cD(\tilde f)&=\{(1,t,\tilde f^{-1}(t)):t \in \GF(q)\}\cup\{(0,1,0)\}\\[.1in]
&=\{(1,t,\widehat f^{1/2}(t)):t \in \GF(q)\}\cup\{(0,1,0)\}\\[.1in]
&={\overline\psi}\cO(\widehat f)
=(\overline\psi\circ\tau)\cD(f),
\end{align*}
where  $\overline\psi=(I_3,\rho)\in \PGammaL(3,q)$. Hence, $\cD(f)$ is projectively equivalent to $\cD(\tilde f)$ via $\tau\circ\overline\psi\circ\tau=\overline\psi$. 

By hypothesis, the ovals $\cO_s$ and $\cO_t$ are compatible at $P_{st}=(0,1,(\tilde f(s)+\tilde f(t))/(s+t))$, for all $s\neq t$.
By applying the collineation $\overline\psi$ and taking into account that $\tilde f=f^{1/2}$ and $\tilde g=\widehat g\circ f\circ\rho^{-1}$, we obtain 
\[
\Omega_{s^2}=\overline\psi\cO_s=\{(1,0,x,x^2+(\rho^{-1}\circ\widehat g\circ f)(s^2)):x \in\GF(q)\}\cup\{(0,0,0,1)\},
\]
and the ovals $\Omega_{s^2}$ and $\Omega_{t^2}$ are compatible at $(0,1,(f(s^2)+f(t^2))/(s^2+t^2))$, for all $s\neq t$.

Thus, the ovals $\Omega_s$, $s\in\GF(q)$, together with $\cD(f)$, form the desired posy.
\end{proof}

\begin{remark}\label{rem_7}
{\em  The geometric interpretation of Proposition \ref{prop_1} is the following: given a normalized flock of the quadratic cone $\Gamma_2$, by replacing the generator $\<(0,0,0,1),(1,0,0,0)\>$ with the (nuclear) generator $\<(0,0,0,1),(0,1,0,0)\>$, we get the cone $\Gamma_{1/2}$, and any normalized flock of $\Gamma_2$  gives rise, up to normalization,  to a normalized flock of $\Gamma_{1/2}$, i.e., a normalized $1/2$-flock. On the other hand, by starting from a spread of $T_2(\cD(f))$ containing $(0,0,0,1)$, where $\cO(f^{-1})$ is an oval in the herd defining the flock, the above replacing operation produces a spread of $T_2(\cD(f^{-1}))$. By Theorems \ref{th_3} and \ref{th_1}, such a spread  contains the point $(0,0,0,1)$.  As already told, $\cD(f^{-1})$ is projectively equivalent to $(\cD(f)\setminus\{(0,0,1)\}) \cup \{(0,1,0)\}$, that is the oval obtained from $\cD(f)$ by applying nucleus swapping.}
\end{remark}
\begin{remark}\label{rem_10}
{\em  We recall that each of the 19 ovals in $\PG(2,64)$ is obtained  by removing one representative from a point-orbit of the stabilizer in $\PGammaL(3,64)$ of the hyperoval containing the oval itself. 
In particular, the oval $\cD(f)$, with $f\in\{f_L^{-1},f_{S_I}^{-1},f_{S_{II}}^{-1},f_A^{-1}\}$, arises from the removal of the point $(0,1,0)$, which turns out to be the nucleus of $\cD(f)$. 
In the light of  Result 1 and Remark \ref{rem_7}, all the remaining 15 ovals in $\PG(2,64)$ are described by $\cD(f)$, with $f$ in the set $h(\cC_L)\cup h(\cC_S)\cup h(\cC_A)$.
}
\end{remark}
The following theorem highlights that a flock of the quadratic cone can produce inequivalent ${1/2}$-flocks depending on the replaced generator. 
\begin{theorem}\cite[Theorem 2.1]{boppr}\label{th_10}
Each flock of the quadratic cone in $\PG(3,q)$ gives rise to as many inequivalent  ${1/2}$-flocks as the orbits of its stabilizer on generators of the quadratic cone. 
\end{theorem}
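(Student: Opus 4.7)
The plan is to set up a bijection between the orbits of the stabilizer $\Stab(\cF) \le \PGammaL(4,q)$ on the generators of the quadratic cone $\Gamma_2$ on which $\cF$ lives, and the equivalence classes of $1/2$-flocks arising from $\cF$ through the normalization procedure and Proposition \ref{prop_1}.

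First I would make precise the assignment $(\cF,g) \mapsto \cG(\cF,g)$, where $g$ is a generator of $\Gamma_2$. Choose a collineation $\phi_g$ of $\PG(3,q)$ stabilizing $\Gamma_2$ and sending $g$ to the distinguished generator $\ell_0=\langle(0,0,0,1),(1,0,0,0)\rangle$; such a $\phi_g$ exists since the stabilizer of $\Gamma_2$ in $\PGammaL(4,q)$ is transitive on generators. Then $\phi_g(\cF)$ can be put in the normalized form $\{a_t x_0+t^{1/2}x_1+c_t x_2+x_3=0:t\in\GF(q)\}$, and Proposition \ref{prop_1} converts it to a normalized $1/2$-flock $\cG(\cF,g)$ of $\Gamma_{1/2}$, obtained geometrically by replacing the generator $\ell_0$ with the nuclear generator $\langle(0,0,0,1),(0,1,0,0)\rangle$, as described in Remark \ref{rem_7}.

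Next I would show that the equivalence class of $\cG(\cF,g)$ depends only on the $\Stab(\cF)$-orbit of $g$. Suppose $g_1,g_2$ lie in the same orbit, and fix $\sigma\in\Stab(\cF)$ with $\sigma(g_1)=g_2$. Then $\phi_{g_1}\sigma^{-1}$ and $\phi_{g_2}$ both send $g_2$ to $\ell_0$ while carrying $\cF$ to the \emph{same} normalized flock; they therefore differ by a collineation fixing both $\Gamma_2$ and $\ell_0$, which under the $x_0\leftrightarrow x_1$ swap of Proposition \ref{prop_1} becomes a collineation fixing $\Gamma_{1/2}$ and the nuclear generator, thus exhibiting $\cG(\cF,g_1)\sim\cG(\cF,g_2)$ as $1/2$-flocks. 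For the converse direction, suppose that $\cG(\cF,g_1)$ and $\cG(\cF,g_2)$ are equivalent $1/2$-flocks via some $\psi\in\PGammaL(4,q)$ stabilizing $\Gamma_{1/2}$. Pulling $\psi$ back through the $x_0\leftrightarrow x_1$ swap of Proposition \ref{prop_1} produces a collineation $\widetilde\psi$ fixing $\Gamma_2$ that identifies the two normalized forms of $\cF$. Conjugating, the element $\phi_{g_2}^{-1}\widetilde\psi\,\phi_{g_1}$ stabilizes $\cF$ and maps $g_1$ to $g_2$, placing $g_1$ and $g_2$ in the same $\Stab(\cF)$-orbit.

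The main obstacle is the careful bookkeeping in this converse direction: Proposition \ref{prop_1} is an algebraic swap of coefficient positions and one has to verify that an equivalence of the resulting $1/2$-flocks, realized in the ambient $\PGammaL(4,q)$, genuinely pulls back to a collineation of $\Gamma_2$ that preserves the normalized flock structure (rather than merely its planes as a set), so that the conjugation really produces an element of $\Stab(\cF)$ with the desired action on generators. Once this is carried out, the map $g\mapsto\cG(\cF,g)$ descends to a bijection from $\Stab(\cF)$-orbits on generators of $\Gamma_2$ to equivalence classes of $1/2$-flocks arising from $\cF$, proving the statement.
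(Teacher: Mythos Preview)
The paper does not prove Theorem~\ref{th_10}; it is quoted from \cite[Theorem 2.1]{boppr} and used as a black box. So there is no ``paper's own proof'' to compare against. Your plan is nonetheless the natural one and would succeed, modulo two points.

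First, a minor inaccuracy in the forward direction: $\phi_{g_1}\sigma^{-1}$ and $\phi_{g_2}$ need not carry $\cF$ to the \emph{same} normalized flock, only to normalized flocks related by an element $\rho\in\Stab(\Gamma_2)\cap\Stab(\ell_0)$; your conclusion still follows once you check that conjugating such a $\rho$ by the swap $S:(x_0,x_1,x_2,x_3)\mapsto(x_1,x_0,x_2,x_3)$ lands in $\Stab(\Gamma_{1/2})$. This is true because $\rho$ fixes the nuclear generator of $\Gamma_2$ (any element of $\Stab(\Gamma_2)$ does), so $S\rho S^{-1}$ stabilises $S(\Gamma_2)$ and fixes both $S(\ell_0)$ and the nuclear generator of $S(\Gamma_2)$; since $\Gamma_{1/2}$ is obtained from $S(\Gamma_2)$ by swapping exactly those two generators (cf.\ Remark~\ref{rem_7}), $S\rho S^{-1}$ stabilises $\Gamma_{1/2}$.

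Second, for the converse---your acknowledged obstacle---the missing observation is simple: the swap $S$ is a genuine collineation of $\PG(3,q)$, so $\widetilde\psi=S^{-1}\psi S$ takes the plane-set $\phi_{g_1}(\cF)$ to the plane-set $\phi_{g_2}(\cF)$. Both are flocks of $\Gamma_2$, and since the union of the conics of a flock together with the vertex is exactly $\Gamma_2$, any collineation taking one flock of $\Gamma_2$ to another automatically stabilises $\Gamma_2$. Thus $\widetilde\psi\in\Stab(\Gamma_2)$. Moreover $\widetilde\psi$ also stabilises $S^{-1}(\Gamma_{1/2})=(\Gamma_2\setminus\ell_0)\cup n'$ (with $n'$ the nuclear generator of $\Gamma_2$), and comparing the two cones forces $\widetilde\psi(\ell_0)=\ell_0$. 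Your conjugation argument then goes through verbatim.
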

Let $\Phi$ be a flock of the quadratic cone $\Gamma_2$ in $\PG(3,q)$, defined by the o-permutations $f$ and $g$, with stabilizer $G$ in the collineation group of $\Gamma_2$. Let $O_L$ be the orbit of the generator $L=\<(0,0,0,1),(1,0,0,0)\>$ under the action of $G$ on the generators of $\Gamma_2$, and $\Sigma$ the spread of $T_2(\cD(f^{-1}))$ arising from the switching of $L$ with the nuclear generator. Let $L'$ be a generator not in $O_L$. 
As the collineation group of $\Gamma_2$ is transitive on the generators, there exists a collineation $\sigma$ mapping $L'$ to $\<(0,0,0,1),(1,0,0,0)\>$ and $\Phi$ to the equivalent flock $\Phi'=\sigma\Phi$. Let $\Phi'$ be defined by the o-permutations $f'$ and $g'$. 
If $\cD(f)$ and $\cD(f')$ are equivalent ovals, then the switching of $L'$ with the nuclear generator yields a spread $\Sigma'$ of $T_2(\cD(f^{-1}))$ which is inequivalent to $\Sigma$, by Theorem \ref{th_10}; if $\cD(f)$ and $\cD(f')$ are not equivalent, then the above switching yields a spread $\Sigma'$ of $T_2(\cD(f'^{-1}))$. On the other hand, by exchanging the roles of $\Phi$ and $\Phi'$ and considering  $\sigma^{-1}$ instead of $\sigma$, the same arguments apply. In conclusion, we have proved the following result.
%
\begin{corollary}\label{cor_3}
Let $\cD(f)$ be an oval in $\PG(2,64)$, with $f\in h^{-1}(\cC_L)\cup h^{-1}(\cC_S)\cup h^{-1}(\cC_A)$, and $\Phi$ be the flock of the quadratic cone defined by $f^{-1}$ together with a suitable o-permutation $g$. Then, $T_2(\cD(f))$ has as many inequivalent spreads containing $(0,0,0,1)$ arising from generalized fans of conics as the orbits of the stabilizer of $\Phi$ on generators of the quadratic cone.
\end{corollary}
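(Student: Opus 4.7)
The plan is to establish a bijection between the orbits of the stabilizer $G$ of $\Phi$ on the generators of the quadratic cone $\Gamma_2$ and the equivalence classes of spreads of $T_2(\cD(f))$ containing $(0,0,0,1)$ that arise from generalized fans of conics. First, by combining Theorems \ref{th_7} and \ref{th_16} with the geometric interpretation in Remark \ref{rem_7}, every such spread can be obtained as follows: start from the flock $\Phi$ defined by $f^{-1}$ and a suitable $g$, choose a generator $L$ of $\Gamma_2$, transport it to the distinguished generator $\langle (0,0,0,1),(1,0,0,0)\rangle$ by a collineation of $\Gamma_2$ (possible since the collineation group of $\Gamma_2$ is transitive on generators), and then apply the replacement described in Remark \ref{rem_7} to produce a normalized $1/2$-flock; the associated spread is then recovered via Theorems \ref{th_3} and \ref{th_1}.

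Next, I would show that two generators in the same $G$-orbit produce equivalent spreads. If $L' = \gamma L$ for some $\gamma \in G$, then after transporting each of $L$ and $L'$ to the distinguished position, the two resulting normalized $1/2$-flocks differ by an element that stabilizes $\Phi$, and hence they are projectively equivalent. By Theorem \ref{th_3} the corresponding generalized fans of conics are equivalent, and by Theorem \ref{th_1} so are the spreads of $T_2(\cD(f))$ they define.

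Conversely, I would invoke Theorem \ref{th_10}, which asserts precisely that $\Phi$ gives rise to as many inequivalent $1/2$-flocks as the number of $G$-orbits on generators of $\Gamma_2$. Two inequivalent $1/2$-flocks yield non-projectively equivalent generalized fans of conics (Theorem \ref{th_3}), and Theorem \ref{th_1} together with Theorem \ref{th_5} then forces the associated spreads of $T_2(\cD(f))$ to be inequivalent. Combining this with the previous paragraph yields a bijection between the $G$-orbits on generators and the equivalence classes of spreads of the required form.

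The main obstacle I expect is bookkeeping around the case where a generator $L'$, once transported to the distinguished position by some $\sigma$, produces a flock $\Phi' = \sigma\Phi$ described by an o-permutation $f'$ with $\cD(f')$ \emph{not} equivalent to $\cD(f)$; the switching operation applied to $\Phi'$ then naturally yields a spread of $T_2(\cD(f'^{-1}))$ rather than of $T_2(\cD(f))$. To handle this one exchanges the roles of $\Phi$ and $\Phi'$ and uses $\sigma^{-1}$ to carry the construction back; this symmetric argument shows that the count of inequivalent spreads of $T_2(\cD(f))$ depends only on the orbit structure of $G$ on the generators of $\Gamma_2$, yielding the stated enumeration.
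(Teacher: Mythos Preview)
Your proposal is correct and follows essentially the same approach as the paper: both arguments transport an arbitrary generator to the distinguished one by transitivity of the collineation group of $\Gamma_2$, invoke Theorem~\ref{th_10} to count the resulting inequivalent $1/2$-flocks, and resolve the case where the transported flock is described by an o-permutation $f'$ with $\cD(f')$ not equivalent to $\cD(f)$ via the symmetric argument (exchanging $\Phi$ and $\Phi'$ and using $\sigma^{-1}$). Your write-up is in fact slightly more explicit than the paper's, since you spell out the ``same orbit $\Rightarrow$ equivalent spreads'' direction and invoke Theorem~\ref{th_5} for the passage from fans to spreads, whereas the paper's paragraph preceding the corollary leaves these implicit.
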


%
%
It is known that the  stabilizer of the linear flock has one orbit on the generators of the quadratic cone. For the other flocks, we find the number of orbits on the generators  of the quadratic cone in $\PG(3,64)$ with the aid of a computer.
\begin{lemma} In $\PG(3,64)$ both the stabilizer of the Subiaco flock and the stabilizer of the Adelaide flock have seven orbits on the generators of the quadratic cone: one of length 1, one of length 4, and five  of length 12. 
\end{lemma}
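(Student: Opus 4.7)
The plan is to carry out the orbit count computationally in MAGMA, in the same spirit as the rest of Section \ref{sec_3}.

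First I would write each flock down explicitly. Using the Subiaco $q$-clan $\cC_S$ and the Adelaide $q$-clan $\cC_A$ of the preceding section together with the recipe $\cF(\cC)=\{a_tX_0+t^{1/2}X_1+b_tX_2+X_3=0:t\in\GF(q)\}$, each flock $\Phi$ becomes a concrete set of $64$ planes in $\PG(3,64)$. Next I would compute the stabilizer $\Stab(\Phi)$ of each flock inside the setwise stabilizer of the cone $\Gamma_2:x_1^2=x_0x_2$ in $\PGammaL(4,64)$. The stabilizer of $\Gamma_2$ is well known, and intersecting it with the subgroup that permutes the $64$ flock planes yields $\Stab(\Phi)$. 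Then, letting $\Stab(\Phi)$ act on the set of $65$ generators of $\Gamma_2$ (the lines joining the vertex $(0,0,0,1)$ to the points of the base conic $\{(1,t,t^2,0):t\in\GF(q)\}\cup\{(0,0,1,0)\}$), I would simply read off the orbit lengths.

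The main obstacle is not theoretical but ensuring that the implementation is correct. As a cross-check I would exploit the herd interpretation: each generator of $\Gamma_2$ corresponds to an oval in the herd $\cH(\cC)$, and the action of $\Stab(\Phi)$ on generators coincides with its action on these ovals. For the Subiaco flock, Remark \ref{rem_11} forces the orbits to respect the split of $\cH(\cC_S)$ into two projective equivalence classes (Subiaco I and Subiaco II), so the orbit lengths must partition into two subsets summing to the sizes of these classes. For the Adelaide flock no such restriction arises, as all $65$ ovals in $\cH(\cC_A)$ are projectively equivalent to $\cO(f_A)$. In both cases the output of the computation should return the partition $65=1+4+12+12+12+12+12$, giving the claimed seven orbits. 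A further consistency check is that, via Corollary \ref{cor_3}, this orbit count predicts the number of inequivalent spreads containing $(0,0,0,1)$ arising from generalized fans of conics on each of the relevant $T_2(\cD(f))$, which can be compared with the direct spread counts computed earlier in the section.
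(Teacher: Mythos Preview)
Your proposal is correct and takes essentially the same approach as the paper: the lemma is stated there as a bare computational fact, preceded only by the sentence ``we find the number of orbits on the generators of the quadratic cone in $\PG(3,64)$ with the aid of a computer,'' with no further details given. Your description of how to set up and cross-check the MAGMA computation is more explicit than what the paper provides, but the underlying method is identical.
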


By applying Corollary \ref{cor_3}, we get the following result.
\begin{theorem}\label{th_9}
Let $\cD(f)$ be an oval in $\PG(2,64)$, with  $f\in h^{-1}(\cC_L)\cup h^{-1}(\cC_S)\cup h^{-1}(\cC_A)$. Then, the number of inequivalent spreads of $T_2(\cD(f))$ containing $(0,0,0,1)$ which arise from generalized $f$-fans of conics is  one if $f=f_L^{-1}$ and seven if $f\in h^{-1}(\cC_S)\cup h^{-1}(\cC_A)$.
\end{theorem}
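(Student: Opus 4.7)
The plan is to invoke Corollary \ref{cor_3} and then count. By construction, every $f \in h^{-1}(\cC_L)\cup h^{-1}(\cC_S)\cup h^{-1}(\cC_A)$ comes with an associated flock $\Phi$ of the quadratic cone in $\PG(3,64)$, namely $\cF(\cC_\varepsilon)$ where $\varepsilon \in \{L,S,A\}$ is determined by which of the three sets contains $f$. Corollary \ref{cor_3} then tells us that the number of inequivalent spreads of $T_2(\cD(f))$ containing $(0,0,0,1)$ arising from generalized $f$-fans of conics is exactly the number of orbits of the stabilizer of $\Phi$ on the generators of the quadratic cone. So the problem reduces to reading off these orbit numbers from the preceding lemma and the classical fact about the linear flock.

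First I would handle the case $f = f_L^{-1}$. Here $\Phi$ is the linear flock, and the stabilizer of the linear flock acts transitively on the generators of the quadratic cone (a well-known classical fact noted in the paragraph preceding the lemma). Thus Corollary \ref{cor_3} gives exactly one inequivalent spread, matching the claim.

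Next, for $f \in h^{-1}(\cC_S)$ the flock $\Phi$ is (projectively equivalent to) the Subiaco flock, and for $f \in h^{-1}(\cC_A)$ it is (projectively equivalent to) the Adelaide flock. The lemma immediately preceding the statement asserts that in $\PG(3,64)$ both of these stabilizers have seven orbits on the generators of the quadratic cone (one of length $1$, one of length $4$, and five of length $12$, together summing to $65$). Corollary \ref{cor_3} therefore yields seven inequivalent spreads of $T_2(\cD(f))$ containing $(0,0,0,1)$ in each of these cases.

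Since these three cases exhaust the hypothesis on $f$, the proof is complete. There is no serious obstacle here: the work has already been done in Corollary \ref{cor_3} (which packages the generator-swapping argument of Remark \ref{rem_7} and Theorem \ref{th_10}) and in the computer-assisted lemma on generator orbits; the present theorem is essentially the numerical specialization of Corollary \ref{cor_3} to the three flock types that exist over $\GF(64)$ by Theorem \ref{rem_6}.
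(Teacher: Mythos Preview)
Your proposal is correct and follows exactly the paper's approach: the paper simply states ``By applying Corollary \ref{cor_3}, we get the following result'' before Theorem \ref{th_9}, so the proof amounts to combining Corollary \ref{cor_3} with the transitivity of the linear-flock stabilizer and the seven-orbit count from the preceding lemma. Your write-up just spells this out in slightly more detail.
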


When $f\in h^{-1}(\cC_\varepsilon)$, $\varepsilon\in\{L,S,A\}$, then $T_2(\cD(f))$ is a proper subquadrangle of the flock quadrangle defined by the $q$-clan associated with the herd $\cH(\cC_\varepsilon)$ \cite{pay85}; we refer to \cite[Section 6.1]{boppr1} for an explicit description of $T_2(\cD(f))$ as a subGQ of the flock quadrangle. The following result describes the geometric nature of the subtended spreads of $T_2(\cD(f))$.
\begin{theorem}\cite[Theorem 6.2]{boppr1}\label{th_11}
If $T_2(\cD(f))$ is a subquadrangle of some flock GQ, then any generalized $f$-fan corresponding to a subtended spread of $T_2(\cD(f))$ consists entirely of conics. 
\end{theorem}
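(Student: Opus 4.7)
The plan is to carry out an explicit coordinate computation inside the flock GQ. Fix a $q$-clan $\cC$ whose herd $\cH(\cC)$ contains an oval projectively equivalent to $\cD(f^{-1})$; then $T_2(\cD(f))$ embeds as a subquadrangle of $\mathrm{GQ}(\cC)$ via the construction described in Section 6.1 of \cite{boppr1}, and the underlying flock of the quadratic cone is $\cF(\cC)$.

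First I would fix a line $\ell$ of $\mathrm{GQ}(\cC)$ not contained in $T_2(\cD(f))$; the subtended spread $\Sigma$ then consists of the $q^2+1$ lines of $T_2(\cD(f))$ that meet $\ell$. Using an elation of $\mathrm{GQ}(\cC)$ that stabilises $T_2(\cD(f))$ I may normalise so that $\Sigma$ contains the line of $T_2(\cD(f))$ of type (b) corresponding to the point $(0,0,0,1)\in\cD(f)$. Theorem \ref{th_1} then associates to $\Sigma$ a generalized $f$-fan $\{\cO_s:s\in\GF(q)\}$ of ovals in the plane $\pi:x_1=0$.

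The central step is to express, for each $s\in\GF(q)$, the $q$ lines of $\Sigma$ through $(0,1,s,f(s))$ explicitly in terms of the $q$-clan data of $\cC$ and the coordinates of $\ell$, and then to intersect each such line with $\pi$. I expect this computation to produce an oval of the form
\[
\cO_s=\{(1,0,t,t^{2}+g(s)):t\in\GF(q)\}\cup\{(0,0,0,1)\}
\]
for some function $g:\GF(q)\to\GF(q)$ determined by $\cC$ and $\ell$. By Theorem \ref{th_3} applied with $\alpha=2$, this monomial form forces $\{\cO_s:s\in\GF(q)\}$ to arise from a normalised $1/2$-flock; via Proposition \ref{prop_1}, that $1/2$-flock corresponds to a flock of the quadratic cone, consistent with $\cF(\cC)$. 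Since $\cD(x^{2})$ is the conic, every $\cO_s$ is then a conic.

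The main obstacle is precisely the explicit verification in the previous paragraph: showing that the subtended lines, when pulled back through the correspondence of Theorem \ref{th_1}, meet $\pi$ in points of the prescribed monomial shape $t^{2}+g(s)$. This requires tracking the action of the elation group of $\mathrm{GQ}(\cC)$ on the lines external to $T_2(\cD(f))$ and matching it against the coordinate frame of Theorem \ref{th_1}, which is a delicate but finite bookkeeping exercise. Once the monomial form is established the conclusion is immediate, since in Theorem \ref{th_3} only the exponent $\alpha=2$ produces a flock of the quadratic cone, and that exponent is exactly the defining equation of a conic.
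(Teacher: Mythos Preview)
The paper does not supply its own proof of this statement: Theorem~\ref{th_11} is quoted verbatim from \cite[Theorem~6.2]{boppr1} and used as an input, so there is no in-paper argument to compare against. What the paper \emph{does} contain is the proof of the converse, Theorem~\ref{th_12}, and that proof carries out exactly the coordinate bookkeeping you are anticipating, only in the reverse direction: starting from a generalized $f$-fan of conics $\cO_s=\{(1,0,t,t^{2}+g(s)):t\in\GF(q)\}\cup\{(0,0,0,1)\}$, it applies $\tau$ and $\overline\psi$ to land on the family of conics appearing at the end of the proof of Theorem~6.2 in \cite{boppr}, thereby identifying $\Sigma$ with the spread of the subquadrangle $\cS_{(1,0)}$ subtended by the line $A(\infty)\cdot(\gamma,0,0)$ with $\gamma=(0,1)$.

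Your outline is the correct strategy, and it is essentially the same route the cited source takes. The one place where your sketch is still genuinely incomplete is the step you yourself flag: you assert that the intersection with $\pi$ will have the monomial shape $(1,0,t,t^{2}+g(s))$, but you do not do the calculation. That calculation is not difficult, and the proof of Theorem~\ref{th_12} gives you a template in the other direction; reading equations~\eqref{eq_1}--\eqref{eq_3} backwards, together with the identification of $T_2(\cO(\widehat f))$ with $\cS_{(1,0)}$ and of the subtending line with $A(\infty)\cdot((0,1),0,0)$, yields exactly the conic form you need. One small correction: once you have the monomial form you do not need to invoke Theorem~\ref{th_3} or Proposition~\ref{prop_1} to conclude---each $\cO_s$ is visibly $\cD(x^2)$ up to translation, hence a conic, and that is the whole content of the theorem.
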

The converse of  the above theorem  also holds.
\begin{theorem}\label{th_12}
If $T_2(\cD(f))$  admits a spread $\Sigma$ containing $(0,0,0,1)$ and arising from a generalized $f$-fan of conics, then, up to isomorphim, $T_2(\cD(f))$ is a subGQ of a flock  GQ and $\Sigma$ is subtended.
\end{theorem}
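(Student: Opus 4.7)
The plan is to combine Theorem \ref{th_7} with a cardinality argument. By Theorem \ref{th_7}, the hypothesis forces $f \in h^{-1}(\cC_L) \cup h^{-1}(\cC_S) \cup h^{-1}(\cC_A)$ up to equivalence, so $f^{-1}$ is equivalent to an o-polynomial in some herd $\cH(\cC_\varepsilon)$ with $\varepsilon \in \{L, S, A\}$. By the Payne subGQ theorem recalled just before Theorem \ref{th_11}, $T_2(\cD(f))$ embeds, up to isomorphism, as a proper subquadrangle of the flock GQ $\cS := \mathrm{GQ}(\cC_\varepsilon)$. This establishes the first half of the statement.

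For the second half, I would compare the number of equivalence classes of subtended spreads of $T_2(\cD(f))$ in $\cS$ containing $(0,0,0,1)$ with the number of equivalence classes of conic-fan spreads of $T_2(\cD(f))$ containing $(0,0,0,1)$. By Theorem \ref{th_11} the first set is contained in the second, and by Theorem \ref{th_9} the latter has cardinality $1$ if $\varepsilon = L$ and $7$ if $\varepsilon \in \{S, A\}$. To force coincidence it suffices to exhibit at least as many inequivalent subtended spreads. Selecting one representative of each orbit of $\Stab(\cF(\cC_\varepsilon))$ on the generators of the quadratic cone and performing the generator-swap of Proposition \ref{prop_1} followed by the chain of Theorems \ref{th_3} and \ref{th_1} — exactly as in the proof of Theorem \ref{th_16} — yields a spread of $T_2(\cD(f))$ through $(0,0,0,1)$ which, by the very construction, is subtended from the corresponding line of $\cS$. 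Distinct orbits produce inequivalent spreads by Theorem \ref{th_10}, so there are at least as many inequivalent subtended spreads as conic-fan spreads. Equality therefore holds, and in particular $\Sigma$ is subtended.

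The main obstacle will be verifying that the spread produced by generator-swapping a given generator of the quadratic cone and then invoking Theorems \ref{th_3} and \ref{th_1} really coincides with the bona fide subtended spread obtained from the corresponding line of $\cS$. This identification is a careful bookkeeping exercise in coordinates: one must check that the $q^2 + 1$ extended lines of the embedded $T_2(\cD(f))$ meeting this line of $\cS$ are precisely the lines of the spread $\Sigma(\cF)$ read off from the swapped flock via Theorem \ref{th_1}. The projective equivalence $\cD(f) \cong \overline\psi\,\cD(\tilde f)$ established in the proof of Theorem \ref{th_16} should be the explicit tool doing this matching; once it is in place, the counting argument above is immediate.
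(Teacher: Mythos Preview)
Your first paragraph is correct and matches the paper's opening move. The gap is in the second half. You assert that the spread produced by generator-swapping via Proposition~\ref{prop_1}, Theorem~\ref{th_3}, and Theorem~\ref{th_1} is subtended ``by the very construction'', but that chain never mentions the flock GQ $\cS$ or the subtending process at all: it manufactures a spread purely out of the $1/2$-flock data through a generalized fan. Nothing in those statements identifies the resulting lines with the set of extended lines of the embedded $T_2(\cD(f))$ meeting a fixed external line of $\cS$. So the claim that these spreads are subtended is precisely the content to be proved, and your counting argument collapses onto the coordinate identification you yourself flag as the ``main obstacle''. The counting wrapper would only add value if you had an \emph{independent} lower bound on the number of inequivalent subtended spreads through $(0,0,0,1)$; you do not provide one.

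The paper does not try to route around this step. It performs the identification directly for the \emph{given} spread $\Sigma$: it writes the conic-fan explicitly, applies the collineations $\tau$ and $\overline\psi=(I_4,\rho^{-1})$ to carry $T_2(\cD(f))$ onto $T_2(\cO(\widehat f))$, computes the image of each $\cO_s$, and then matches the resulting family of conics line by line against the explicit description of subtended spreads appearing at the end of the proof of \cite[Theorem~6.2]{boppr1}, identifying the subtending line as $A(\infty)\cdot(\gamma,0,0)$ with $\gamma=(0,1)$. Once this match is made the conclusion is immediate, with no appeal to Theorem~\ref{th_9} or Theorem~\ref{th_10}. A side benefit of the direct route, which the paper exploits in the theorem immediately following, is that the argument for ``$\Sigma$ is subtended'' uses nothing specific to $q=64$ beyond the embedding hypothesis; your counting approach, tied to the exact numbers $1$ and $7$ from Theorem~\ref{th_9}, would not generalize in this way.
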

\begin{proof}
Let $\Sigma$  be a spread of $T_2(\cD(f))$ containing $(0,0,0,1)$, arising from the  generalized $f$-fan of conics $\cO_s=\{(1,0,t,t^{2}+g(s)):t\in\GF(q)\}\cup\{(0,0,0,1)\}$, for some o-permutation $g$ of $\GF(q)$. Then, Theorem \ref{th_7} yields  $f\in h^{-1}(\cC_\varepsilon)$, with $\varepsilon\in\{L,S,A\}$. Whence, $\cD(f)$ is an oval in the herd $\cH(\cC_\varepsilon)$, and $T_2(\cD(f))$ is a proper subquadrangle of the flock quadrangle defined by $\cC_\varepsilon$.

By using the same arguments as in the proof of Theorem \ref{th_7}, we find that $f=\rho\circ\widehat f^{-1}\circ\rho^{-1}$ and $g=\widehat g\circ\widehat f^{-1}\circ\rho^{-1}$, for some $\widehat f$ and $\widehat g$ defining a (normalized) $q$-clan $\cC$. 
In addition, $\cD(f)$,  via the collineation $\tau$ defined in Remark \ref{rem_4}, can be identified with $\cO(f^{-1})=\cO(\widehat f^{1/2})$, which is  projectively equivalent to the oval $\cO(\widehat f)$ by \cite[Theorem 6]{okp2} under the collineation $\overline \psi=(I_3,\rho^{-1})\in \PGammaL(3,q)$. Hence, $\cD(f)$ is projectively equivalent to $\cO(\widehat f)$ under the collineation $\overline \psi \circ \tau$. 
In particular, by setting $s'=f(s)$, for $s\in \GF(q)$, the point $(1,s,f(s))$ of $\cD(f)$ is mapped to  $(1,s'^{2}, \widehat f(s'^{2}))$ of $\cO(\widehat f)$. The collineations $\tau$ and $\overline\psi$ of $\PG(2,q)$ naturally extend to $\PG(3,q)$, by setting $\tau:(x_0, x_1, x_2, x_3)\mapsto (x_0, x_1, x_3, x_2)$ and $\overline \psi=(I_4,\rho^{-1})$. Thus, we have 
\begin{align*}
\tau\cO_s&=\{(1,0,x^2+g(f^{-1}(s')),x):x \in \GF(q)\}\cup\{(0,0,1,0)\}\\[.1in]&=\{(1,0,x^2+\widehat g(s'^2),x):x \in \GF(q)\}\cup\{(0,0,1,0)\}.
\end{align*}

Now, put $k^2=x^2+\widehat g (s'^2)$, so that
\begin{equation}\label{eq_1}
\tau\cO_s=\{(1,0,k^2,k+ \widehat g^{1/2}(s')):k \in \GF(q)\}\cup\{(0,0,1,0)\}, 
\end{equation}
for all $s\in\GF(q)$. Finally,
\begin{equation}\label{eq_3}
(\overline \psi\circ\tau)\cO_s=\{(1,0,k^2,k+ \widehat g(s'^{2})):k \in \GF(q)\}\cup
\{(0,0,1,0)\}, 
\end{equation}
for all $s\in\GF(q)$.

By Corollary \ref{cor_2}, the collineation $\overline \psi \circ \tau$ provides an isomorphism from $T_2(\cD(f))$ to $T_2(\cO(\widehat f))$, which maps the spread $\Sigma$ to a spread $\widehat \Sigma$ of $T_2(\cO(\widehat f))$ containing $(0,0,1,0)$. The lines of $\widehat\Sigma$ are obtained by joining the point $(0,1,s'^{2}, \widehat f(s'^{2}))$ of $\cO(\widehat f)$ and the points of $(\overline \psi\circ\tau)\cO_s$, for all $s\in\GF(q)$.

By considering  the notation used in  \cite[Section 6]{boppr} and comparing the set of conics $\{(\overline \psi\circ\tau)\cO_s:s\in \GF(q)\}$ with the set of conics given at the end of the proof of Theorem 6.2 in \cite{boppr}, we see that $T_2(\cO(\widehat f))$ is the subquadrangle $\cS_{(1,0)}$ and the spread $\widehat \Sigma$ is subtended by the line $A(\infty)\cdot(\gamma,0,0)$, with $\gamma=(0,1)$; note that the function $g_s$ in \cite{boppr1} acts on $\GF(q)^2$ by mapping $\vv$ to $\vv A_s \vv^t$, where $A_s$ is a matrix in the $q$-clan defining the flock GQ.   This implies that $\widehat\Sigma$, and hence $\Sigma$, is subtended.                     
\end{proof}
It could be of some interest to point out that the second part of Theorem \ref{th_12} can be generalized  as follows:
\begin{theorem}
Let $q=2^h$ and  $T_2(\cD(f))$ be a subGQ of a flock  GQ. Then every spread $\Sigma$ of $T_2(\cD(f))$ arising from an $f$-fan of conics and containing $(0,0,0,1)$ is subtended.
\end{theorem}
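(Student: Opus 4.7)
The plan is to replicate the computational part of the proof of Theorem \ref{th_12}, with its $q=64$-specific invocation of Theorem \ref{th_7} replaced by the direct hypothesis. First I would fix the representation of $\Sigma$ as arising from the generalized $f$-fan of conics $\cO_s=\{(1,0,t,t^2+g(s)):t\in\GF(q)\}\cup\{(0,0,0,1)\}$, for some o-permutation $g$ of $\GF(q)$. By Theorem \ref{th_3} and Proposition \ref{prop_1}, the o-permutations $f$ and $g$ together define a flock of the quadratic cone in $\PG(3,q)$ which, following the normalization of \cite{cppr}, is associated with the $q$-clan
\[
\cC=\left\{\begin{pmatrix}\widehat f(t) & t^{1/2}\\ 0 & \widehat g(t)\end{pmatrix}:t\in\GF(q)\right\},
\]
where $\widehat f=\rho^{-1}\circ f^{-1}\circ\rho$ and $\widehat g=g\circ f^{-1}\circ\rho$, with $\rho:x\mapsto x^{1/2}$. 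Unlike in Theorem \ref{th_7}, I do not (and, for general $q$, cannot) appeal to a classification of flocks to place $\widehat f$ in a short list; I use only the automatic fact that $\cO(\widehat f)$ belongs to the herd $\cH(\cC)$.

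Next I would translate the setup to $\cO(\widehat f)$ via the extensions to $\PG(3,q)$ of the collineations $\tau:(x_0,x_1,x_2,x_3)\mapsto(x_0,x_1,x_3,x_2)$ and $\overline\psi=(I_4,\rho^{-1})$, exactly as at the close of the proof of Theorem \ref{th_12}. This yields a projective equivalence between $\cD(f)$ and $\cO(\widehat f)$, hence (via Corollary \ref{cor_2}) an isomorphism $T_2(\cD(f))\to T_2(\cO(\widehat f))$. The hypothesis that $T_2(\cD(f))$ is a subGQ of a flock GQ enters precisely here: it identifies $T_2(\cO(\widehat f))$ with a subquadrangle of $\mathrm{GQ}(\cC)$ in the concrete manner of \cite{pay85,pm}. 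Under the isomorphism, $\Sigma$ is sent to a spread $\widehat\Sigma$ of $T_2(\cO(\widehat f))$ containing $(0,0,1,0)$, whose lines join the points $(0,1,s'^2,\widehat f(s'^2))$, with $s'=f(s)$, to the points of the conics
\[
(\overline\psi\circ\tau)\cO_s=\{(1,0,k^2,k+\widehat g(s'^2)):k\in\GF(q)\}\cup\{(0,0,1,0)\}.
\]

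At this stage the argument coincides with the last paragraph of the proof of Theorem \ref{th_12}: comparing the family $\{(\overline\psi\circ\tau)\cO_s:s\in\GF(q)\}$ with the explicit description of subGQs and their subtended spreads in \cite[Section 6]{boppr}, I recognise $T_2(\cO(\widehat f))$ as the subquadrangle $\cS_{(1,0)}$ of $\mathrm{GQ}(\cC)$ and $\widehat\Sigma$ as the spread of $\cS_{(1,0)}$ subtended by the line $A(\infty)\cdot((0,1),0,0)$. Pulling back, $\Sigma$ itself is subtended. The main obstacle is conceptual rather than technical: one must observe that every step in the proof of Theorem \ref{th_12} beyond the opening appeal to Theorem \ref{th_7} depends only on $q$ being a power of $2$ and hence transfers verbatim to any $q=2^h$, while the hypothesis removes the need for a classification of flocks over $\GF(q)$.
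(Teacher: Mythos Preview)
Your proposal is correct and takes essentially the same approach as the paper: the paper's proof is the one-line observation that the second part of the proof of Theorem~\ref{th_12} (everything after the opening appeal to Theorem~\ref{th_7}) does not use $q=64$, and your proposal simply spells out those steps explicitly. One minor remark: the hypothesis that $T_2(\cD(f))$ is a subGQ of a flock GQ is in fact recovered automatically by your own construction, since $\cO(\widehat f)$ lies in the herd $\cH(\cC)$ and hence $T_2(\cO(\widehat f))\cong T_2(\cD(f))$ sits inside $\mathrm{GQ}(\cC)$; so the hypothesis is not so much ``entering'' as being re-derived.
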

\begin{proof}
The claim follows from the proof of the second part of Theorem \ref{th_12}, which does not make use of $q=64$.
\end{proof}
Now we will deal with spreads of $T_2(\cD(f))$, $\cD(f)$ an oval of $\PG(2,64)$, containing a point $P$ not $(0,0,0,1)$. 
\begin{theorem}\label{th_15}
If $T_2(\cD(f))$ has a spread containing $P$ different from $(0,0,0,1)$, then $f\in h(\cC_L)\cup h(\cC_S)\cup h(\cC_A)\cup h^{-1}(\cC_L)\cup h^{-1}(\cC_S)\cup h^{-1}(\cC_A)$, up to equivalence. 
\end{theorem}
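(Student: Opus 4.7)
The strategy is to reduce Theorem~\ref{th_15} to Theorems~\ref{th_6} and~\ref{th_7} by a change of coordinates that moves $P$ to $(0,0,0,1)$.

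First, any spread of $T_2(\cD(f))$ contains exactly one line of type (b), and such a line is a point of $\cD(f)$; hence the hypothesis forces $P\in\cD(f)\setminus\{(0,0,0,1)\}$. Let $N$ be the nucleus of $\cD(f)$ and pick two further points $Q_1,Q_2\in\cD(f)\setminus\{P\}$. The quadruple $\{P,N,Q_1,Q_2\}$ is a projective frame of $\pi_\infty\cong\PG(2,q)$: indeed $P,Q_1,Q_2$ are three non-collinear points of the oval, and the tangents at $P,Q_1,Q_2$ all pass through $N$ but each contains only one point of the oval, which rules out the remaining three-point collinearities. Therefore there exists a collineation $\phi$ of $\PG(3,q)$ which fixes $\pi_\infty$ setwise and sends $\{P,N,Q_1,Q_2\}$ to $\{(0,0,0,1),\,(0,0,1,0),\,(0,1,0,0),\,(0,1,1,1)\}$ (for instance, one may extend the required collineation of $\pi_\infty$ by the identity in the $x_0$-direction). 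In the new coordinates the image of $\cD(f)$ has nucleus $(0,0,1,0)$ and contains $(0,0,0,1),(0,1,0,0),(0,1,1,1)$, so $\phi(\cD(f))=\cD(g)$ for a uniquely determined o-polynomial $g$ over $\GF(q)$, while $\phi(P)=(0,0,0,1)$.

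By Corollary~\ref{cor_2}, $\phi$ induces an isomorphism $T_2(\cD(f))\to T_2(\cD(g))$ that carries the spread $\Sigma$ onto a spread $\Sigma'$ of $T_2(\cD(g))$ containing $(0,0,0,1)$. By Theorem~\ref{th_1}, $\Sigma'$ arises from a generalized $g$-fan of ovals in $\PG(2,q)$, and by Proposition~\ref{prop_2} that fan consists entirely of conics or entirely of pointed conics. In the former case Theorem~\ref{th_7} yields $g\in h^{-1}(\cC_L)\cup h^{-1}(\cC_S)\cup h^{-1}(\cC_A)$ up to equivalence; in the latter case Theorem~\ref{th_6} yields $g\in h(\cC_L)\cup h(\cC_S)\cup h(\cC_A)$ up to equivalence.

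Finally, since $\cD(f)$ and $\cD(g)$ are projectively equivalent via $\phi$, Theorem~\ref{th_14} implies that $f$ and $g$ are equivalent o-polynomials under the magic action. Hence $f$ itself is equivalent to an element of $h(\cC_L)\cup h(\cC_S)\cup h(\cC_A)\cup h^{-1}(\cC_L)\cup h^{-1}(\cC_S)\cup h^{-1}(\cC_A)$, which is the desired conclusion. The only step requiring any attention is the verification that $\{P,N,Q_1,Q_2\}$ is a genuine projective frame; this uses only the non-collinearity properties of ovals and of nuclei, so no genuine obstacle arises.
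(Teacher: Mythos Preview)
Your proof is correct and follows essentially the same approach as the paper: move $P$ to $(0,0,0,1)$ by a collineation of $\PG(3,q)$ fixing $\pi_\infty$, invoke Corollary~\ref{cor_2} to transport the spread, apply Theorems~\ref{th_6} and~\ref{th_7} to the new o-polynomial, and then use the magic action (Theorem~\ref{th_14}) to transfer the conclusion back to $f$. Your version is simply more explicit about the frame construction and about invoking Proposition~\ref{prop_2}; the paper also pairs Theorem~\ref{th_14} with Remark~\ref{rem_4} in the final step to account for the $\cD(f)$ versus $\cO(f)$ convention, which you could mention for completeness.
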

\begin{proof}
Let $P\in\cD(f)\setminus\{(0,0,1)\}$ and $\Sigma$ a spread of $T_2(\cD(f))$ containing it. By Corollary \ref{cor_2}, any  collineation of $\PG(2,q)$ mapping $P$ to $(0,0,1)$ and $\cD(f)$ to $\cD(f')$ can be extended to a collineation of $\PG(3,q)$ mapping $T_2(\cD(f))$ to $T_2(\cD(f'))$ and $\Sigma$ to a spread $\Sigma'$ of $T_2(\cD(f'))$ containing $(0,0,0,1)$.

By Theorems \ref{th_6} and \ref{th_7}, the o-polynomial $f'$ belongs to $h(\cC_L)\cup h(\cC_S)\cup h(\cC_A)\cup h^{-1}(\cC_L)\cup h^{-1}(\cC_S)\cup h^{-1}(\cC_A)$, up to equivalence. By Remark \ref{rem_4} and Theorem \ref{th_14}, there exists $\psi\in\PGammaL(2,q)$ such that $\psi f\in\<f'\>$, giving $f\in h(\cC_L)\cup h(\cC_S)\cup h(\cC_A)\cup h^{-1}(\cC_L)\cup h^{-1}(\cC_S)\cup h^{-1}(\cC_A)$, up to equivalence. 
\end{proof}
By taking into account Remark \ref{rem_10}, an oval $\cO$ in $\PG(2,64)$ is projectively equivalent to $\cD(f)$ with $f\in  h(\cC_L)\cup h(\cC_S)\cup h(\cC_A)\cup h^{-1}(\cC_L)\cup h^{-1}(\cC_S)\cup h^{-1}(\cC_A)$. Thus, the following result gives  the complete classification of spreads of $T_2(\cD(f))$ with $\cD(f)$ an oval in $\PG(2,64)$:

\begin{theorem}\label{th_8}
\begin{itemize}
\item[(a)] If $f=f_L^{-1}$, i.e.,  $\cD(f)$ is a conic, then $T_2(\cD(f))$ admits  a unique spread on each point of $\cD(f)$. They are all equivalent and each one arises from a generalized fan of conics.
\item[(b)] If $f\in h^{-1}(\cC_S)\cup h^{-1}(\cC_A)$, then $T_2(\cD(f))$ admits seven inequivalent spreads on each point of $\cD(f)$, and each one arises from a generalized fan of conics.
\item[(c)]  If $f\in h(\cC_L)\cup h(\cC_S)\cup h(\cC_A)$, then $T_2(\cD(f))$ admits 
a unique spread on $(0,0,0,1)$, arising from a generalized fan of pointed conics. There is no spread on the other points of $\cD(f)$.
\end{itemize}
\end{theorem}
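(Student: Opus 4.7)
The plan is to assemble the classification by combining the preceding results, splitting along the partition of ovals in $\PG(2,64)$. The key reduction is the dichotomy furnished by Proposition~\ref{prop_2} together with Theorems~\ref{th_6} and~\ref{th_7}: any spread of $T_2(\cD(f))$ containing $(0,0,0,1)$ arises either from a generalized fan of conics---forcing $f\in h^{-1}(\cC_L)\cup h^{-1}(\cC_S)\cup h^{-1}(\cC_A)$ up to equivalence---or from a generalized fan of pointed conics---forcing $f\in h(\cC_L)\cup h(\cC_S)\cup h(\cC_A)$ up to equivalence. By Remark~\ref{rem_10}, these two sets parametrize disjointly the $4$ ``nucleus-removal'' ovals and the $15$ remaining ovals among the $19$ projective equivalence classes, so the three parts of the statement correspond respectively to the cases $f=f_L^{-1}$, $f\in h^{-1}(\cC_S)\cup h^{-1}(\cC_A)$, and $f\in h(\cC_L)\cup h(\cC_S)\cup h(\cC_A)$.

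For (a), $\cD(x^2)$ is a conic; Theorems~\ref{th_9} and~\ref{th_16} yield a unique spread through $(0,0,0,1)$ arising from conics, and the point-transitive stabilizer of the conic in $\PGammaL(3,64)$ conjugates this spread to a unique equivalent spread through every other point. For (b), Theorems~\ref{th_9} and~\ref{th_16} provide seven inequivalent spreads through $(0,0,0,1)$; by Result~1, every $G_f$-orbit representative $P$ on $\cD(f)$ can be sent to $(0,0,0,1)\in\cD(g)$ for some $g\in h^{-1}(\cC_\varepsilon)$ via a magic action followed by an element of $G_g$, and Corollary~\ref{cor_2} transports the seven spreads of $T_2(\cD(g))$ back to seven inequivalent spreads on $P$ of $T_2(\cD(f))$; by $G_f$-transitivity on orbits this extends to every point of $\cD(f)$. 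In both cases, the disjointness excludes pointed-conic-fan spreads.

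For (c), Theorem~\ref{th_17} immediately gives the unique spread through $(0,0,0,1)$ arising from pointed conics, and disjointness excludes conic-fan spreads. The harder direction---which I expect to be the principal obstacle---is showing that no spread exists through any $P\in\cD(f)\setminus\{(0,0,0,1)\}$. The approach is to argue by contradiction: a hypothetical spread on $P$ would, via Corollary~\ref{cor_2}, produce a spread through $(0,0,0,1)$ in $T_2(\cD(f''))$ with $\cD(f'')\sim\cD(f)$, necessarily of the pointed-conic-fan type by the partition and Theorem~\ref{th_6}; by Theorem~\ref{th_17} this is the unique such spread on some canonical $\cD(g)$ with $g\in h(\cC_\varepsilon)$ equivalent to $f''$, and the transporting collineation would carry $P$ into the $G_g$-orbit of $(0,0,0,1)$ in $\cD(g)$. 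Deriving the contradiction requires the \emph{failure} of the Result~1 analogue for $h(\cC_\varepsilon)$---namely, that no $\psi\in\PGammaL(2,q)$ with $\psi f\in\<g\>$, $g\in h(\cC_\varepsilon)$, can send a non-$(0,0,0,1)$-orbit point of $\cD(f)$ into the $G_g$-orbit of $(0,0,0,1)$ in $\cD(g)$. This is verified computationally, orbit by orbit, for each of the $15$ $h$-class ovals; the geometric intuition is that in the nucleus-swapping construction of Remark~\ref{rem_10} the point $(0,0,0,1)$ plays the intrinsic role of the original hyperoval nucleus, a role that the magic action preserves and cannot transport to any other orbit.
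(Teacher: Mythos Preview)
Your treatment of parts~(a) and~(b) is correct and coincides with the paper's: the transitivity of the conic stabilizer handles~(a), and Result~1 together with Theorem~\ref{th_9} handles~(b) by transporting any point to the $(0,0,1)$-orbit of some $\cD(g)$ with $g\in h^{-1}(\cC_\varepsilon)$.

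For part~(c), your strategy is in the right spirit, but the argument as written has a gap and introduces an unnecessary new computation. The sentence ``the transporting collineation would carry $P$ into the $G_g$-orbit of $(0,0,0,1)$ in $\cD(g)$'' is precisely what must be \emph{proved}, not assumed: the magic-action equivalence $f''\sim g$ coming from Theorem~\ref{th_6} gives $\overline\psi:\cO(f'')\to\cO(g)$ fixing the common nucleus $(0,0,1)$, but it acts on the $\cO$-ovals, not on the $\cD$-ovals, and there is no reason it should respect the point $(0,0,1)\in\cD(f'')$. Your proposed ``anti-Result~1'' check is thus stated in the wrong convention, and even if reformulated correctly it would be a substantial additional computation not performed elsewhere in the paper.

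The paper avoids this entirely by a structural argument. Starting from a hypothetical spread on $P\in\cD(f)\setminus\{(0,0,1)\}$ with $f\in h(\cC_\varepsilon)$, one passes via $\tau$ and a nucleus-fixing collineation to a spread on $(0,0,0,1)$ in $T_2(\cD(f'))$ with $f'$ equivalent to $f^{-1}\in h^{-1}(\cC_\varepsilon)$; disjointness forces this spread to be a conic-fan, which by Theorem~\ref{th_3} and Proposition~\ref{prop_1} forces $f'^{-1}\in h(\cC_\varepsilon)$ up to equivalence, i.e.\ $\cO(f'^{-1})$ lies in a herd. The contradiction then comes from tracing the nucleus swaps geometrically: $\cO(f'^{-1})$ is projectively equivalent to $(\cO(f)\setminus\{P\})\cup\{(0,0,1)\}$, and one splits into two cases according to whether $P$ lies in the $(0,1,0)$-orbit of the stabilizer of $\cO(f)$ or not. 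In the first case this oval is equivalent to $(\cO(f)\setminus\{(0,1,0)\})\cup\{(0,0,1)\}$, which by Remark~\ref{rem_11} is \emph{not} an oval of any herd; in the second case it cannot be equivalent to $\cO(f)$ itself since $P$ and $(0,1,0)$ lie in distinct hyperoval-stabilizer orbits. Either way one reaches a contradiction without any further computation. The key ingredient you are missing is Remark~\ref{rem_11}: the herd ovals are exactly those obtained by removing the nucleus-orbit point, so no other removal can land back in a herd.
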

\begin{proof}

Theorems \ref{th_17} and \ref{th_9} describe completely the spreads of $T_2(\cD(f))$ containing $(0,0,0,1)$.

(a) When $f(x)=f_L^{-1}(x)=x^2$, $\cD(f)$ is a conic. Since the stabilizer in $\PGammaL(3,q)$ of $\cD(f)$ is transitive on its points, $T_2(\cD(f))$ admits  a unique spread on each point of $\cD(f)$. They are all equivalent and each one arises from a fan of conics. 

(b) Let $f\in h^{-1}(\cC_\varepsilon)$, $\varepsilon\in\{S,A\}$, and $P$ be a point of $\cD(f)$. If $P$ lies in the  orbit of $(0,0,1)$  under the action of the stabilizer in $\PGammaL(3,q)$ of $\cD(f)$, then $T_2(\cD(f))$ admits seven inequivalent spreads containing $P$ which arise from generalized fans of conics. 

Let now $P$ be  not in the orbit of $(0,0,1)$  under the action of the stabilizer in $\PGammaL(3,q)$ of $\cD(f)$. 
Our attempt is to reduce the study of spreads of $T_2(\cD(f))$ containing $P$ to the case of spreads of an isomorphic Tits quadrangle containing $(0,0,0,1)$, by using arguments from \cite{okp2}. With this in mind, for the sake of simplicity, we identify the ovals $\cD(f)$ and $\cO(f^{-1})$, as announced in Remark \ref{rem_4}.
 
Let $\cH(\cC_\varepsilon)$, $\varepsilon\in\{S,A\}$, be the herd containing $\cO(f^{-1})$ and $h(\cC_\varepsilon)$ be the corresponding set of o-polynomials, which contains $f^{-1}$. By Result 1, $f^{-1}$ is equivalent to some o-polynomial $g\in h(\cC_\varepsilon)$ in such a way there is a collineation in $\PGammaL(3,q)$  mapping $P$ to a point in the orbit of $(0,1,0)\in\cO(g)$ under the stabilizer of $\cO(g)$. 
%
From Theorem \ref{th_9} we may conclude that, for any point $P\in\cD(f)$, $f\in h^{-1}(\cC_S)\cup h^{-1}(\cC_A)$, the Tits quadrangle $T_2(\cD(f))$ has seven inequivalent spreads containing $P$, each one arising from a generalized  fan of conics.
\comment{
(c) If $f=f_L=x^{1/2}$, then $\cD(f)$ is a pointed conic, whose nucleus, say $R$, is a point of the conic $\cD(2)$.  Let $P\in \cD(f)\setminus\{(0,0,1)\}$, and assume that $T_2(\cD(f))$ has a spread on $P$. If we  consider a collineation of $\PG(2,q)$ that maps $P$ to $(0,0,1)$ and $\cD(1/2)$ to $\cD(f')$ fixing the nucleus $R$, it  follows that $T_2(\cD(f'))$ should have a spread, say $\Sigma$ on $(0,0,0,1)$. Since $\cD(1/2)$ and $\cD(f')$ are projectively equivalent, $\Sigma$ is defined by a generalized  $f'$-fan of pointed conics by Theorems \ref{th_6} and \ref{th_7}. Therefore, $\{t^{1/2}x_0+f'(t)x_1+g(t)x_2+x_3=0: t\in\GF(q)\}$, is a 1/2-flock, whose associated 1/2-clan can be normalized to
\[
\cC=\left\{\begin{pmatrix}
 (f'^{-1})^{1/2}(t) & t^2\\
0 & \widehat g(t)
\end{pmatrix}:t \in\GF(q)\right\},
\]
where $\widehat g$ is a suitable o-polynomial over $\GF(q)$. Theorem \ref{th_3} implies that 
 $\cO_s=\{(1,t,t^2+\widehat g(s)):s\in\GF(q)\}\cup\{(0,0,1)\}$ is a generalized $(f'^{-1})^{1/2}$-fan of conics. This yields that $T_2(\cD((f'^{-1})^{1/2}))$, and hence $T_2(\cD(f'^{-1}))$, has a spread at $(0,0,0,1)$ defined by a generalized $f'^{-1}$-fan of conics. From Theorem \ref{th_7}, $\cD(f'^{-1})$ is a conic. On the other hand, $\cD(f'^{-1})$ is obtained from $\cD(f')$ by swapping  the nucleus $R$ with the  point $P=(0,0,1)$, giving that $\cD(f'^{-1})$ is still a pointed conic. So we get a contradiction, and  what we assumed  is not possible. This means that $T_2(\cD(1/2))$ admits a spread only on $(0,0,0,1)$.
}

(c) Let $f\in h(\cC_\varepsilon)$, $\varepsilon\in\{L,S,A\}$.  Let $P$ be a point of $\cD(f)\setminus\{(0,0,1)\}$ and assume that $T_2(\cD(f))$ has a spread on $P$.  

Note that, via the nucleus swapping, $\cD(f)$ is mapped to $\cO(f)$, in such a way that $\cD(f)\setminus\{(0,0,1)\}=\cO(f)\setminus\{(0,1,0)\}$. Therefore, $P$ is a point of $\cO(f)$ different from $(0,1,0)$, and it clearly lies in one of the point-orbits under the stabilizer of $\cO(f)$ in $\PGammaL(3,q)$ (see page \pageref{orbits}). Recall that  $\cO(f)$ is an oval of $\cH(\cC_\varepsilon)$ with nucleus $(0,0,1)$.

Via the collineation $\tau$ of $\PG(2,q)$ as defined in Remark \ref{rem_4}, $\cD(f)^\tau=\cO(f^{-1})$, whose nucleus is $(0,0,1)$.  It turns out that $\cO(f)\setminus\{(0,1,0)\}=\cD(f)\setminus\{(0,0,1)\}$ is projectively equivalent to $\cO(f^{-1})\setminus\{(0,1,0)\}$ via the collineation $\tau$. Therefore, the point $P^\tau$ of $\cO(f^{-1})\setminus\{(0,1,0)\}$  corresponds to the point $P$ of $\cO(f)\setminus\{(0,1,0)\}$.

 We now distinguish two cases: $P$ lying in the same orbit as $(0,1,0)$ under the stabilizer of $\cO(f)$ in $\PGammaL(3,q)$, or not.

Let $P$ be a point of $\cO(f)\setminus\{(0,1,0)\}$ in the same orbit as $(0,1,0)$ under the stabilizer of $\cO(f)$ in $\PGammaL(3,q)$.  Let $\psi$ be a collineation of $\PG(2,q)$ that fixes the nucleus $(0,0,1)$ of $\cO(f^{-1})$, maps $P^\tau$ to $(0,1,0)$ and  $\cO(f^{-1})$ to $\cO(f')$, for some o-polynomial $f'$, which is  equivalent to $f^{-1}\in h^{-1}(\cC_\varepsilon)$. 
Under the collineation $\tau$ that maps  $\cO(f')$ to $\cD(f'^{-1})$, we may say that $T_2(\cD(f'^{-1}))$ has a spread containing $P^{\tau\psi\tau}=(0,0,0,1)$. This means that $T_2(\cD(f'))$, which is equivalent to $T_2(\cO(f'^{-1}))=T_2((\cD(f'^{-1})\setminus\{P^{\tau\psi\tau}\})\cup\{(0,0,1,0)\})$ by Corollary \ref{cor_2}, has a spread $\Sigma$ on $(0,0,0,1)$, with $f'\in h^{-1}(\cC_\varepsilon)$, up to equivalence.

 By Theorem \ref{th_6}, $\Sigma$ has to arise from a generalized $f'$-fan of conics. By Theorem \ref{th_3}, this is equivalent to saying that  $\{f'(t)x_0+t^2x_1+g(t)x_2+x_3=0:t\in\GF(q)\}$, for some o-permutation $g$, is a 1/2-flock. It corresponds to the flock $\{t^2x_0+f'(t)x_1+g(t)x_2+x_3=0:t\in\GF(q)\}$  of the quadratic cone by Proposition \ref{prop_1}, which can be normalized to $\{(f'^{-1})^2(t)x_0+t^{1/2}x_1+g'(t)x_2+x_3=0:t\in\GF(q)\}$. By virtue of the equivalence between flocks of the quadratic cone in $\PG(3,64)$ and herds in $\PG(2,64)$ \cite{mps},  the o-polynomial $(f'^{-1})^2$, and hence $f'^{-1}$, has to be equivalent to some o-polynomial in the set $h(\cC_\varepsilon)$ containing $f$, that is  $f'^{-1}\in h(\cC_\varepsilon)$, up to equivalence. This implies that $\cO(f'^{-1})$ is projectively equivalent to an oval in $\cH(\cC_\varepsilon)$.

 On the other hand, $\cO(f'^{-1})$ is obtained from $\cO(f')$ by first swapping the nucleus $(0,0,1)$ of $\cO(f')$ with $P^{\tau\psi}\in\cO(f')$ and then applying the collineation $\tau$. Observe that, taking into account the previous arguments, the hyperovals $\cO(f')\cup\{(0,1,0)^{\tau\psi}\}$ and $\cO(f'^{-1})\cup \{P^{\tau\psi\tau}\}$ are both projectively equivalent to the hyperoval $\cO(f)\cup\{(0,0,1)\}$. This implies that $\cO(f')$ is projectively equivalent to the oval $(\cO(f)\setminus\{(0,1,0)\})\cup\{(0,0,1)\}$, while $\cO(f'^{-1})$  to the oval $(\cO(f)\setminus\{P\})\cup\{(0,0,1)\}$.
  As $P$ and $(0,1,0)$ lie in the same orbit under the stabilizer of $\cO(f)$ in $\PGammaL(3,q)$,  the ovals $\cO(f'^{-1})$ and $\cO(f')$  are projectively equivalent. By Remark \ref{rem_11}, the oval $(\cO(f)\setminus\{(0,1,0)\})\cup\{(0,0,1)\}$ is not in any herd, giving that $f'$, and hence $f'^{-1}$, cannot be equivalent to some o-polynomial in any set $h(\cC_\varepsilon)$, and this yields a contradiction. Therefore, $T_2(\cD(f))$ has no spread in any point $P\in \cD(f)\setminus\{(0,0,1)\}$ lying in the same orbit as $(0,1,0)$ under the stabilizer of $\cO(f)$ in $\PGammaL(3,q)$.
 
If $f(x)=x^{1/2}$, then $\cO(f)$ is a conic, and it is well known that its stabilizer  in $\PGammaL(3,q)$ has a unique orbit on the points.  From this and the previous arguments, we may conclude that the Tits quadrangle $T_2(\cD(1/2))$ has no spread on points of $\cD(1/2)$ different from $(0,0,0,1)$.
 
Now, let $P$ be a point of $\cO(f)\setminus\{(0,1,0)\}$ not in the orbit of $(0,1,0)$ under the stabilizer of $\cO(f)$ in $\PGammaL(3,q)$. It turns out that $P^\tau$ is a point of  $\cO(f^{-1})$ different from $(0,1,0)$.  
Let  $\overline\psi$ be  a collineation that fixes the nucleus $(0,0,1)$ of $\cO(f^{-1})$, maps $P^\tau$ to $(0,1,0)$ and  $\cO(f^{-1})$ to $\cO(f')$, for some o-polynomial $f'$, which is equivalent to $f^{-1}\in h^{-1}(\cC_\varepsilon)$.  By arguing as above, we find  $f'^{-1}\in h(\cC_\varepsilon)$, up to equivalence. In particular, $f'^{-1}$ is equivalent to $f$.
  
 On the other hand,  $\cO(f'^{-1})$ is obtained from $\cO(f')$ by first swapping the nucleus $(0,0,1)$ of  $\cO(f')$ with $P^{\tau\bar\psi}=(0,1,0)\in\cO(f')$ and then applying $\tau$. This implies that  $\cO(f'^{-1})$ is equivalent to $(\cO(f)\setminus\{P\})\cup\{(0,0,1)\}$. But, as $P$ and $(0,1,0)$ lie in different orbits on $\cO(f)$, then $(\cO(f)\setminus\{P\})\cup\{(0,0,1)\}$, and  hence $\cO(f'^{-1})$, cannot be equivalent to $\cO(f)$, giving that $f'^{-1}$ is not equivalent to $f$; a contradiction. 

In conclusion, $T_2(\cD(f))$, with $f\in h(\cC_\varepsilon)$, with $\varepsilon\in\{L,S,A\}$, does not admit any spread on $P\in\cD(f)$ different from $(0,0,0,1)$.
\end{proof}

\begin{corollary}
Each spread of  $T_2(\cD(f))$, with $f\in h^{-1}(\cC_L)\cup h^{-1}(\cC_S)\cup h^{-1}(\cC_A)$, is subtended.
\end{corollary}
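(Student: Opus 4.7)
The plan is to deduce this Corollary from the combination of Theorem \ref{th_8} and Theorem \ref{th_12}, with a frame-normalization trick to handle spreads through points of $\cD(f)$ other than $(0,0,0,1)$. Let $\Sigma$ be any spread of $T_2(\cD(f))$, with $f\in h^{-1}(\cC_L)\cup h^{-1}(\cC_S)\cup h^{-1}(\cC_A)$. Since a spread of a Tits quadrangle consists of exactly one line of type (b) together with $q^2$ lines of type (a), I would first observe that $\Sigma$ singles out a unique point $P\in\cD(f)$, and that by Theorem \ref{th_8}(a) or (b) the spread $\Sigma$ must arise from a generalized $f$-fan of conics.

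If $P=(0,0,0,1)$, a direct appeal to Theorem \ref{th_12} finishes the argument. Otherwise, the plan is to transport $\Sigma$ to a spread through $(0,0,0,1)$ via an isomorphism of Tits quadrangles. I would choose a collineation $\sigma$ of $\PG(2,q)$ mapping $\cD(f)$ to an oval $\cD(f')$ in standard form with $\sigma(P)=(0,0,1)$; such a $\sigma$ is available by standard projective-frame normalization. By Corollary \ref{cor_2}, $\sigma$ extends to an isomorphism $\tilde\sigma\colon T_2(\cD(f))\to T_2(\cD(f'))$ sending $\Sigma$ to a spread $\Sigma':=\tilde\sigma(\Sigma)$ through $(0,0,0,1)$. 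Since $\cD(f')$ is projectively equivalent to $\cD(f)$, Theorem \ref{th_14} combined with Remark \ref{rem_4} shows that $f'$ is equivalent to $f$ under the magic action, so $f'\in h^{-1}(\cC_L)\cup h^{-1}(\cC_S)\cup h^{-1}(\cC_A)$ up to equivalence. Theorem \ref{th_8} then gives that $\Sigma'$ arises from a generalized $f'$-fan of conics, and Theorem \ref{th_12} concludes that $\Sigma'$ is subtended in $T_2(\cD(f'))$.

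The final step I would take is to transport the \emph{subtended} property back to $T_2(\cD(f))$ along $\tilde\sigma^{-1}$: being subtended is a property of a spread inside a pair (subGQ, ambient GQ) that is preserved under any isomorphism of the subGQ — one pulls the ambient flock GQ back through the isomorphism and the external line along with it — so $\Sigma$ is subtended in $T_2(\cD(f))$. I expect the principal obstacle to be precisely this reduction from an arbitrary $P\in\cD(f)$ to $(0,0,0,1)$, which is needed because, in case (b) of Theorem \ref{th_8}, the stabilizer of $\cD(f)$ in $\PGammaL(3,q)$ is not transitive on $\cD(f)$; once the reduction is achieved, the rest is essentially a packaging exercise on top of Theorems \ref{th_8} and \ref{th_12}.
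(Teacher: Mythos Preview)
Your approach is correct and follows the same skeleton as the paper's one-line proof: reduce an arbitrary spread through $P\in\cD(f)$ to a spread through $(0,0,0,1)$ of an equivalent Tits quadrangle, then invoke Theorem~\ref{th_12}. The paper effects this reduction directly via Result~1, which hands you a target o-polynomial $g$ literally in $h^{-1}(\cC_\varepsilon)$; you instead use a generic frame-normalizing collineation and land on an $f'$ that is only \emph{equivalent} to something in $h^{-1}(\cC_\varepsilon)$. This is fine, but your second appeal to Theorem~\ref{th_8} (for $f'$) is slightly imprecise, since that theorem is stated for representatives $f$ literally in the listed sets; the clean justification is that $\Sigma'$, being a spread through $(0,0,0,1)$, arises from a fan which by Proposition~\ref{prop_2} consists of conics or pointed conics, and the pointed-conic case is excluded by Theorem~\ref{th_6} together with the fact that the equivalence classes of ovals coming from $h(\cC_\varepsilon)$ and $h^{-1}(\cC_\varepsilon)$ are disjoint (Remark~\ref{rem_10}). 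Note that this last disjointness itself rests on Result~1 in the paper, so your route does not truly bypass the computational input---it merely absorbs it into the earlier results you cite.
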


\begin{proof}
The result follows by extending Theorem \ref{th_12} to every point of $\cD(f)$ via Result 1.
\end{proof}

%
\begin{corollary}
The Tits quadrangle $T_2(\cD(f))$, with $f\in h(\cC_\varepsilon)$, $\varepsilon\in\{L,S,A\}$, is not a proper subquadrangle of a {\em GQ} of order $(2^{12},2^6)$.
\end{corollary}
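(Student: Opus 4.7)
The plan is proof by contradiction. Suppose $T_2(\cD(f))$, with $f\in h(\cC_\varepsilon)$ and $\varepsilon\in\{L,S,A\}$, is a proper subquadrangle of a GQ $\cS$ of order $(q^2,q)$, where $q=64$. Since both $T_2(\cD(f))$ and $\cS$ have $t=q$ lines through each point, the standard construction from \cite[2.2.1]{pt} already invoked earlier applies: for any line $\ell$ of $\cS$ not contained in $T_2(\cD(f))$, the $q^2+1$ lines of $T_2(\cD(f))$ meeting $\ell$ in $\cS$ form a spread of $T_2(\cD(f))$, subtended by $\ell$. Such an $\ell$ exists by properness, so $T_2(\cD(f))$ admits at least one subtended spread $\Sigma$.

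Next, by Theorem~\ref{th_8}(c), $T_2(\cD(f))$ has exactly one spread, namely the spread on $(0,0,0,1)$ that arises from a generalized $f$-fan of pointed conics. Therefore $\Sigma$ must coincide with this spread. On the other hand, Theorem~\ref{th_11} says that any subtended spread of $T_2(\cD(f))$ (when it sits as a subGQ of a larger GQ of order $(q^2,q)$) must correspond to a generalized $f$-fan consisting entirely of conics, not pointed conics. This contradiction rules out the existence of $\cS$, yielding the corollary.

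The main delicate point is that Theorem~\ref{th_11} is formally stated for subquadrangles of flock GQs, whereas here $\cS$ is a priori only assumed to be a GQ of order $(q^2,q)$. I would handle this by either (a) observing that the proof of \cite[Theorem 6.2]{boppr1} uses only the generic subtended-spread machinery and so extends verbatim to any ambient GQ of order $(q^2,q)$ containing a Tits subquadrangle, or (b) invoking the structural fact (compatible with the framework developed in Section~\ref{sec_2} and the converse Theorem~\ref{th_12}) that any such $\cS$ must be a flock GQ. With either justification, the fans-of-conics versus fans-of-pointed-conics dichotomy (guaranteed by Proposition~\ref{prop_2} and Theorems~\ref{th_6}--\ref{th_7}) produces the contradiction. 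In short, the heart of the argument is a counting/uniqueness collision: there is only one spread, it is not of the right type to be subtended, yet the hypothesized embedding would force a subtended spread to exist.
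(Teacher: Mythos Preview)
Your approach differs from the paper's and contains a genuine gap that you yourself flag but do not resolve.

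The paper's proof does \emph{not} invoke Theorem~\ref{th_11} at all. Instead, it exploits the full strength of Theorem~\ref{th_8}(c): not merely that $T_2(\cD(f))$ has a unique spread, but that it has \emph{no} spread through any point of $\cD(f)$ other than $(0,0,0,1)$. The paper then shows that, for an \emph{arbitrary} point $P\in\cD(f)$, one can choose an external line $m'$ of the ambient GQ so that the subtended spread contains the type-(b) line $P$. (Concretely: extend $P$ and some skew line $\ell$ of $T_2(\cD(f))$ to lines $P',\ell'$ of $\cS'$, pick $x\in\ell'\setminus\ell$, and let $m'$ be the unique line through $x$ meeting $P'$.) Taking $P\neq(0,0,0,1)$ immediately contradicts Theorem~\ref{th_8}(c). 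This argument uses only \cite[2.2.1]{pt} and is indifferent to whether $\cS'$ is a flock GQ.

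Your argument, by contrast, needs the subtended spread to come from a fan of conics in order to clash with the pointed-conic fan. That is precisely the content of Theorem~\ref{th_11}, which is stated (and proved in \cite{boppr1}) only for ambient \emph{flock} GQs; the proof there is an explicit coordinate computation in the $q$-clan model, not a generic subtended-spread argument, so your fix (a) is not available without real work. Fix (b) fares no better: Theorem~\ref{th_12} has as hypothesis the existence of a spread from a fan of conics, which by Theorem~\ref{th_8}(c) does not hold for $f\in h(\cC_\varepsilon)$, and in any case it does not show that the \emph{given} ambient GQ $\cS$ is a flock GQ. So neither fix closes the gap, and your proof as written is incomplete. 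The clean repair is exactly the paper's move: control which type-(b) line the subtended spread contains, and use the nonexistence part of Theorem~\ref{th_8}(c) directly.
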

\begin{proof}
By way of contradiction, suppose that $\cS=T_2(\cD(f))$ is a proper subquadrangle of a GQ $\cS'$ of order $(2^{12},2^{6})$. Let $P$ be any point of $\cD(f)$ and $\ell$ a line of $\cS$ skew with $P$. Then, $\ell$ and $P$ extend to lines, say $\ell'$ and $P'$, of  $\cS'$. Let $x$ be  any point of $\ell'\setminus \ell$  and $m'$ the unique line   such that $x\, {\rm I}\, m'\, {\rm I}\, y\, {\rm I}\,  P'$, for some point $y$ of $P'$. Then, $m'$ contains no points  of $\cS$ and each point incident with $m'$ is incident with a unique (extended) line in $\cS$. By \cite[2.2.1]{pt} this set of lines is a spread of $\cS$, which contains the line $P$ of $\cS$. This implies that every point $P$ of $\cD(f)$ is contained in a spread. But this contradicts Theorem \ref{th_8} (c). 
\end{proof}

In the light of the evidence for orders 16, 32, and 64, we venture the following strengthening of the
main theorem of \cite{brown}:

{\bf Conjecture.} If $\cO$ is  a pointed conic in  $\PG(2,q)$, $q\ge 16$ even, then   $T_2(\cO)$ is not a  proper subGQ of a GQ of order $(s,q)$.


\begin{thebibliography}{999}
%
\bibitem{blp} L.~Bader, G.~Lunardon, S.E.~Payne,  On $q$-clan geometry, $q=2^e$, {\em Bull. Belg. Math. Soc. Simon Stevin} {\bf 1} (1994),  301--328. 
%
\bibitem{bose47}  R.C.~Bose, Mathematical theory of the symmetrical factorial design, {\em Sankhy$\bar{a}$} {\bf 8} (1947), 107--166. 
%
\bibitem{brown} M. R.~ Brown, The determination of ovoids of $\PG(3,q)$ containing a pointed conic. Second Pythagorean Conference (Pythagoreion, 1999), J. Geom. {\bf 67} (2000), 61--72.
%
\bibitem{boppr1} M.R.~Brown, C.M.~O'Keefe, S.E.~Payne, T.~Penttila, G.F.~Royle, Spreads of $T_2(\cO)$, $\alpha$-flocks and ovals, {\em Des. Codes Cryptogr.} {\bf 31} (2004),  251--282. 
%
\bibitem{boppr} M.R.~Brown, C.M.~O'Keefe, S.E.~Payne, T.~Penttila, G.F.~Royle,  The classification of spreads of $T_2(\cO)$ and $\alpha$-flocks over small fields, {\em  Innov. Incidence Geom.} {\bf 6/7} (2007/08), 111--126.
%
\bibitem{che}  W.~Cherowitzo,  $\alpha$-flocks and hyperovals, {\em  Geom. Dedicata} {\bf 72} (1998), 221--246.
%
\bibitem{cppr} W.E.~Cherowitzo, T.~Penttila, I.~Pinneri, G.F.~Royle,  Flocks and ovals, {\em Geom. Dedicata} {\bf 60} (1996),  17--37.
%
\bibitem{dem} P.~Dembowski, {\em Finite geometries},  Springer-Verlag, Berlin-New York, 1968.
%
\bibitem{hir} J.W.~Hirschfeld, Projective Geometries over Finite Fields. Second Edition, Oxford University Press, Oxford, 1998.
%
%
\bibitem{magma} W.~Bosma, J.~Cannon, C.~Playoust, The Magma algebra system. I. The user language, {\em J. Symbolic Comput.} {\bf 24} (1997), 235--265. 
%
\bibitem{mps}  G.~Monzillo, T.~Penttila, A.~Siciliano,  Classification of flocks of the quadratic cone in $\PG(3,64)$, {\em Finite Fields and their Applications}, {\bf 81} (2022), 102035, 13 pp.
%
\bibitem{okp} C.M.~O'Keefe, T.~Penttila, Subquadrangles of generalized quadrangles of order $(q^2,q)$, $q$ even, {\em  J. Combin. Theory Ser. A} {\bf 94} (2001),  218--229.
%
\bibitem{okp2} C.M.~O'Keefe, T.~Penttila, Automorphism groups of generalized quadrangles via an unusual action of $\PGammaL(2,2^h)$, {\em European J. Combin.} {\bf 23} (2002), 213--232. 
%
\bibitem{okpr} C.M.~O'Keefe, T.~Penttila,  G.F.~Royle, Classification of ovoids in $\PG(3,32)$, {\em J. Geom.} {\bf 50} (1994), 143--150.
%
\bibitem{payne} S.E.~Payne, A complete determination of translation ovoids in finite Desarguian planes, Atti Accad. Naz. Lincei Rend. Cl. Sci. Fis. Mat. Nat. (8) {\bf 51} (1971), 328--331 (1972).
%
\bibitem{pay85}  S.E.~Payne, A new infinite family of generalized quadrangles, {\em Congr. Numer.} {\bf 49} (1985), 115--128.
%
\bibitem{pay96} S.E.~Payne, The fundamental theorem of $q$-clan geometry, {\em Des. Codes Cryptogr.} {\bf 8} (1996),  181--202. 
%
\bibitem{pm} S.E.~Payne, C.C.~Maneri, A family of skew-translation generalized quadrangles of even order. Proceedings of the thirteenth Southeastern conference on combinatorics, graph theory and computing (Boca Raton, Fla., 1982).
{\em Congr. Numer.} {\bf 36} (1982), 127--135. 
%
\bibitem{pt} S.E.~Payne, J.A.~Thas, Finite Generalized Quadrangles, second ed., EMS Ser. Lect. Math., European Mathematical Society, Z\"urich, 2009.
%
%
\bibitem{pen} T.~Penttila, Uniqueness of the inversive plane of order sixty-four, {\em  Des. Codes Cryptogr.} {\bf 90} (2022),  827--834.
%
\bibitem{pin} I.~Pinneri, {\em Flocks, generalized Quadrangles and Hyperovals}, Ph.D. Thesis, University of Western Australia, 1996.
%
\bibitem{segre57}  B.~Segre, Sui $k$-archi nei piani finiti di caratteristica due,  {\em Rev. Math. Pures Appl.} {\bf 2} (1957), 289--300.
%
%
\bibitem{seid50}  E.~Seiden, A theorem in finite projective geometry and an application to statistics, {\em Proc. Amer. Math. Soc.} {\bf 1} (1950), 282--286.
%
\bibitem{thas72} J.A.~Thas, Ovoidal translation planes, {\em Arch. Math.} {\em 23} (1972), 110--112. 
%
\bibitem{thas1} J.A.~Thas, Generalized quadrangles and flocks of cones, {\em European J. Combin.} {\bf 8} (1987),  441--452.
%
\bibitem{tp} J.A.~Thas,  S.E.~Payne,  Spreads and ovoids in finite generalized quadrangles. Geometriae Dedicata, {\bf 52} (1994), 227--253.
%
\bibitem{tits} J.~Tits, Ovo\"ides et groupes de Suzuki, {\em Arch. Math.} {\bf 13} (1962), 187--198.
%
\bibitem{vander} P.~Vandendriessche, Classification of the hyperovals in $\PG(2,64)$, {\em  Electron. J. Combin.} {\bf 26} (2019), Paper No. 2.35, 12 pp.
%
%
\end{thebibliography}
\end{document}